\documentclass[11pt]{amsart}
\usepackage{amssymb}
\usepackage{amsmath}
\usepackage{amsthm}
\usepackage{verbatim}
\usepackage[all]{xy}
\usepackage{url}
\usepackage{mathtools}
\usepackage{dsfont}
\usepackage{pdflscape}
\usepackage[normalem]{ulem}
\usepackage{extarrows}

\usepackage[mathscr]{euscript}
\usepackage{calligra}
\usepackage[T1]{fontenc}
\usepackage{tikz-cd}
\usetikzlibrary{calc}
\usepackage[top=2cm, bottom=2.1cm, left=2.5cm, right=2.7cm]{geometry}
\usepackage
{hyperref}

\DeclareSymbolFont{rsfs}{U}{rsfs}{m}{n}
\DeclareSymbolFontAlphabet{\mathrsfs}{rsfs}

\usepackage{color}

\definecolor{cldcol}{rgb}{0,0.5,0.4}

\renewcommand{\mathbb}{\mathds}

\theoremstyle{plain}
\newtheorem{theorem}{Theorem}[section]

\newtheorem{proposition}[theorem]{Proposition}
\newtheorem{lemma}[theorem]{Lemma}
\newtheorem{corollary}[theorem]{Corollary}

\newtheorem{fact}[theorem]{Fact}

\theoremstyle{definition}

\newtheorem{remark}[theorem]{Remark}

\numberwithin{equation}{section}

\newtheorem*{notation*}{Notation}

\newcommand{\Z}{{\mathbb Z}}

\newcommand{\ssc}{{\rm sc}}

\newcommand{\labelt}[1]{\xrightarrow{\makebox[0.8em]{\scriptsize ${#1}$}}}

\newcommand{\hs}{\kern 0.8pt}
\newcommand{\hssh}{\kern 1.2pt}
\newcommand{\hshs}{\kern 1.6pt}
\newcommand{\hssss}{\kern 2.0pt}

\newcommand{\hm}{\kern -0.8pt}
\newcommand{\hmm}{\kern -1.2pt}

\newcommand{\Inn}{{\rm Inn}}

\newcommand{\E}{{\mathcal E}}

\begin{document}

\title[Central algebraic cohomology of Galois gerbes]%
{Central algebraic cohomology of Galois gerbes}

\author{Taeyeoup Kang}
\address{School of Mathematical Sciences,
Tel Aviv University, 6997801 Tel Aviv, Israel}

\address{Current address: Department of Mathematics, Ben-Gurion University of the
Negev, 8410501 Be’er Sheva, Israel }

\email{taeyeoupkang@gmail.com}

\thanks{This research was supported
by the Israel Science Foundation (grant 1030/22).}

\keywords{Kottwitz's gerbe, Kaletha's gerbe, local field, global field, reductive group, gerbe cohomology}

\subjclass{
  11E72
, 20G10
, 20G25
}

\date{\today}

\begin{abstract}
We study central algebraic cohomology of Galois gerbes
over local fields of characteristic zero and number fields,
with coefficients in a connected reductive group and an
algebraic central subgroup, not necessarily finite or connected.
For gerbes admitting a comparison morphism from Kaletha's
rigid gerbe, we obtain fiber product formulas for abelianized
cohomology in terms of algebraic fundamental groups of finite
central quotients and homomorphisms from the band to the
central subgroup.
Such comparisons exist whenever the band is of finite type.
Over number fields, independently of the comparison hypothesis,
the full cohomology set is recovered from its abelianization
and its real localizations through a cartesian square.
Over non-Archimedean local fields and totally imaginary
number fields, abelianization is bijective.
We also specialize the formulas to Kottwitz's local and
global gerbes.
\end{abstract}

\maketitle


\section{Introduction}
\label{s:Intro}

Let $F$ be a local field of characteristic zero or a number
field, let $\bar F$ be an algebraic closure, and let
$\Gamma=\operatorname{Gal}(\bar F/F)$.
Let $G$ be a connected reductive $F$-group and let
$Z\subseteq Z(G)$ be an algebraic central $F$-subgroup,
not necessarily finite or connected.
Since $G$ is connected reductive, $Z$ is automatically
of multiplicative type.

Kottwitz's and Kaletha's Galois gerbes give two enlargements
of ordinary Galois cohomology that occur in the study of
inner forms and the Langlands correspondence.
Their cohomology admits descriptions in terms of algebraic
fundamental groups, but the resulting parameter spaces
have different forms.
Over a non-Archimedean local field, Kottwitz's basic set is parametrized
by $\pi_1(G)_\Gamma$, whereas Kaletha's rigid cohomology
involves norm kernels in the fundamental groups of finite
central quotients of $G$.
Over a number field, these descriptions also involve
modules indexed by places.

In this paper we combine Kaletha's local and global
parametrizations with the comparison formalism of
Kaletha--Ta\"ibi \cite[Section~3.1]{KT23}.
For Galois gerbes admitting a comparison from the rigid
gerbe, we obtain fiber product formulas for abelianized
central algebraic cohomology, allowing arbitrary
algebraic central subgroups $Z$.
Over number fields, we recover the full cohomology set
from its abelianization and its real localizations.
These results apply to the Kottwitz gerbe and to every
gerbe whose band is of finite type.

Let
\[
1\longrightarrow D(\bar F)
\longrightarrow\mathcal E
\longrightarrow\Gamma
\longrightarrow1
\]
be a Galois gerbe banded by a pro-algebraic $F$-group $D$
of multiplicative type.
We write $H^1(\mathcal E,Z\to G)$ for its central
algebraic cohomology.
An algebraic cocycle is a pair $(x,\nu)$, where $x$ is
a continuous $G(\bar F)$-valued cocycle on $\mathcal E$
and $\nu\colon D_{\bar F}\to G_{\bar F}$ is an algebraic
homomorphism whose map on $\bar F$-points is the
restriction of $x$ to the band.
The central condition requires $\nu$ to factor through
$Z_{\bar F}$, where the cocycle identity then implies that
$\nu$ is defined over $F$.

We also use Kaletha's abelianized cohomology
$H^1_{\mathrm{ab}}(\mathcal E,Z\to G)$.
Its defining equivalence relation allows modifications
by ordinary Galois cocycles in the simply connected
coverings of the inner forms determined by $x$.
Abelianization is bijective over a non-Archimedean local field and
over a totally imaginary number field.
Over $\mathbb R$, and over number fields with real
places, the full cohomology set may contain additional
information.

We write $\pi_1(G)$ for Borovoi's algebraic fundamental
group; see \cite{Bor98}.
For a central subgroup $Z$ of $G$, consider
\[
\tilde\pi_1(G,Z)
\coloneqq
\varinjlim_{\substack{A\subseteq Z,\, A\text{ finite}/F}}
\pi_1(G/A).
\]
The transition maps are injective, and there is a natural
exact sequence
\[
0\longrightarrow\pi_1(G)
\longrightarrow\tilde\pi_1(G,Z)
\longrightarrow
\operatorname{Hom}_{\mathbb Z}
\bigl(X^*(Z),\mathbb Q/\mathbb Z\bigr)
\longrightarrow0.
\]
This sequence yields the restriction map in the local
formula.

For a $\Gamma$-module $L$, write
\[
I_\Gamma L
=\langle \sigma\ell-\ell:
  \sigma\in\Gamma,\ \ell\in L\rangle_{\mathbb Z},
\qquad
L_\Gamma=L/I_\Gamma L.
\]
The subscript $\mathrm{tors}$ denotes the torsion subgroup.
For local $F$, let $\mathcal E^{\mathrm{rig}}$ denote
Kaletha's local rigid gerbe, with band $u$.

\begin{theorem}[Theorem~\ref{thm:main-bis}]
\label{thm:main}
Suppose that $F$ is a non-Archimedean local field
and that there exists a morphism
$f\colon\mathcal E^{\mathrm{rig}}\to\mathcal E$
over $\Gamma$, inducing an $F$-homomorphism
$\phi\colon u\to D$ on the bands.
Then $f$ induces a bijection of pointed sets,
natural in $(Z\to G)$,
\[
H^1(\mathcal E,Z\to G)
\simeq
\left(
\frac{\tilde\pi_1(G,Z)}{I_\Gamma\pi_1(G)}
\right)_{\mathrm{tors}}
\underset{
\operatorname{Hom}_{\mathbb Z}
(X^*(Z),\mathbb Q/\mathbb Z)
}{\times}
\operatorname{Hom}_F(D,Z).
\]
The first map to the base is induced by reduction modulo
$\pi_1(G)$.
The second is composition with $\phi$, followed by the
natural identification
\[
\operatorname{Hom}_F(u,Z)
\simeq
\operatorname{Hom}_{\mathbb Z}
\bigl(X^*(Z),\mathbb Q/\mathbb Z\bigr).
\]
The bijection sends $[x,\nu]$ to
$\left(
\kappa_{\mathrm{rig}}
\bigl(\operatorname{ab}[x\circ f,\nu\circ\phi]\bigr),
\nu
\right)$, where $\kappa_{\mathrm{rig}}$ is 
recalled in Theorem~\ref{thm:local-rigid-formula}.
\end{theorem}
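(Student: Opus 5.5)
The plan is to reduce the statement to Kaletha's rigid parametrization (Theorem~\ref{thm:local-rigid-formula}) through the comparison morphism $f$, using the Kaletha--Ta\"ibi formalism \cite[Section~3.1]{KT23}. Three ingredients are needed: abelianization is bijective over a non-Archimedean local field, so $H^1(\mathcal E,Z\to G)=H^1_{\mathrm{ab}}(\mathcal E,Z\to G)$; the rigid formula identifies $H^1_{\mathrm{ab}}(\mathcal E^{\mathrm{rig}},Z\to G)$ with $\bigl(\tilde\pi_1(G,Z)/I_\Gamma\pi_1(G)\bigr)_{\mathrm{tors}}$; and the central character $\nu$ records everything else.

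First I will build the map and check that it is well defined. Pullback $(x,\nu)\mapsto(x\circ f,\nu\circ\phi)$ respects coboundaries and the abelianized equivalence, because $f$ is a morphism over $\Gamma$. The pair $(\kappa_{\mathrm{rig}}(\operatorname{ab}[x\circ f,\nu\circ\phi]),\nu)$ lies in the fiber product: reducing $\kappa_{\mathrm{rig}}$ modulo $\pi_1(G)$ recovers the band restriction $\nu\circ\phi\in\operatorname{Hom}_F(u,Z)\simeq\operatorname{Hom}(X^*(Z),\mathbb Q/\mathbb Z)$. This compatibility should be part of Theorem~\ref{thm:local-rigid-formula}, or should follow from its construction via the finite quotients $G/A$. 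Naturality in $(Z\to G)$ is inherited from the naturality of $\kappa_{\mathrm{rig}}$ and of pullback.

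Next, bijectivity. I will fiber both sides over $\nu\in\operatorname{Hom}_F(D,Z)$. Fix $\nu$ and write $\mathcal E=\mathcal E^{\mathrm{rig}}\times_u D$ up to the kernel issue. More robustly, I will use the Kaletha--Ta\"ibi statement that for any $\nu$ the set of classes on $\mathcal E$ with band restriction $\nu$ is a torsor, or is empty, under $H^1(\Gamma,G)$ in the abelian sense, that is, under $H^1_{\mathrm{ab}}(F,G)\simeq\pi_1(G)_{\Gamma,\mathrm{tors}}$. The same holds on $\mathcal E^{\mathrm{rig}}$ for $\nu\circ\phi$, and the pullback is equivariant. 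On the target side, the fiber of the first projection over a fixed element of $\operatorname{Hom}(X^*(Z),\mathbb Q/\mathbb Z)$ is a torsor under $(\pi_1(G)_\Gamma)_{\mathrm{tors}}$. To see this, take the kernel of the torsion part of $\tilde\pi_1/I_\Gamma\pi_1\to\operatorname{Hom}(X^*(Z),\mathbb Q/\mathbb Z)$; that the torsion part surjects onto the base follows because the base is torsion and $\tilde\pi_1/I_\Gamma\pi_1$ is an extension of it by $\pi_1(G)_\Gamma$, whose nontorsion part is free. Thus fiberwise the map is an equivariant map of torsors under the same group, and it is bijective as soon as the fiber is nonempty on the same $\nu$.

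The main obstacle is nonemptiness: every $\nu\in\operatorname{Hom}_F(D,Z)$ must lift to an algebraic cocycle on $\mathcal E$. Pulling back only gives lifts of $\nu\circ\phi$ on $\mathcal E^{\mathrm{rig}}$, and $\phi$ need not be injective or surjective. To handle this, I will first obtain the lift for the abelian case $Z\to Z$ or $Z\to G/G_{\mathrm{der}}$-type tori, via the vanishing of the obstruction class in $H^2(\Gamma,Z)$. This obstruction is the image of the gerbe class of $\mathcal E$ under $\nu$, and it becomes the pushforward of the rigid class via $\phi$. The rigid gerbe's class maps to zero in $H^2(F,G)$ after passing to $G$, by Kaletha's surjectivity of $H^1(\mathcal E^{\mathrm{rig}},Z\to G)\to\operatorname{Hom}(u,Z)$ for finite $Z$, extended to general $Z$ by taking limits over finite $A\subseteq Z$. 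Indeed, $\nu$ factors through a finite $A$ because $D$ is pro-finite type and the relevant image is finite, so I would reduce to finite $Z$ first. Injectivity across different $\nu$ is automatic, since $\nu$ is a coordinate of the map.
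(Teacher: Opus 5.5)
\textbf{There is a genuine gap in your nonemptiness step: it aims at a false statement.} It is not true that every $\nu\in\operatorname{Hom}_F(D,Z)$ lifts to an algebraic cocycle on $\mathcal E$, and your three supporting claims fail too.

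Take $\mathcal E=\mathcal E^{\mathrm{rig}}$, $f=\mathrm{id}$ and $G=Z=\mathbb G_m$. Then $\pi_1(G)=\mathbb Z$, $\tilde\pi_1(G,Z)=\mathbb Q$ and $I_\Gamma\pi_1(G)=0$. So the first factor $(\tilde\pi_1(G,Z)/I_\Gamma\pi_1(G))_{\mathrm{tors}}$ is $0$, while $\operatorname{Hom}_F(u,\mathbb G_m)\simeq\mathbb Q/\mathbb Z$.
\begin{itemize}
\item The extension $0\to\mathbb Z\to\mathbb Q\to\mathbb Q/\mathbb Z\to 0$ shows that the torsion of an extension of a torsion group by $\pi_1(G)_\Gamma$ need not surject onto the base.
\item Already for finite $Z=\mu_n\subset\mathbb G_m$, the Newton map $H^1(\mathcal E^{\mathrm{rig}},\mu_n\to\mathbb G_m)\to\operatorname{Hom}_F(u,\mu_n)\simeq\mathbb Z/n$ is not surjective. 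Its source is trivial: $\nu\mapsto\nu_*\xi_{\mathrm{rig}}$ is a surjection between groups of order $n$, hence bijective onto $H^2(F,\mu_n)$, and $H^2(F,\mu_n)\to H^2(F,\mathbb G_m)$ is injective by Hilbert 90. So the surjectivity you attribute to Kaletha does not hold. What he proves is surjectivity of $\operatorname{Hom}_F(u,A)\to H^2(F,A)$.
\item The obstruction $\nu_*\xi_{\mathcal E}$ need not vanish, even in $H^2(F,Z)$ when $G=Z$.
\end{itemize}
For the Kottwitz gerbe the same phenomenon appears as $B(F,\mathbb G_m)=\mathbb Z\subsetneq\mathbb Q=\operatorname{Hom}_F(D_F,\mathbb G_m)$. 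There your reduction to a finite $A\subseteq Z$ also fails, since $D_F$ is a protorus.

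\textbf{What bijectivity actually needs} is that the fiber over $\nu$ on $\mathcal E$ is nonempty if and only if the fiber over $\nu\circ\phi$ on $\mathcal E^{\mathrm{rig}}$ is. Your remark that the obstruction for $\nu$ equals the obstruction for $\nu\circ\phi$ points the right way, but the conclusion must be this equivalence, not vanishing.

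The direct argument is the pushout description you set aside. Since $f$ lies over $\Gamma$, the map $(d,w')\mapsto d\,f(w')$ identifies the pushout $D(\bar F)\times^{u(\bar F)}\mathcal E^{\mathrm{rig}}$ with $\mathcal E$. There is no kernel issue: $d\,f(w')=1$ forces $w'\in u(\bar F)$ and $d=\phi(w')^{-1}$. Given $(y,\nu\circ\phi)$, set $x_{d f(w')}=\nu(d)\,y_{w'}$. This is well defined precisely because $y|_{u}=\nu\circ\phi$. It satisfies the cocycle identity because $\nu$ is central and defined over $F$. This is the surjectivity half of Fact~\ref{fact:cartesian}, which the paper simply invokes.

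\textbf{You also do not need an abelian torsor structure}, and you did not justify its equivariance under pullback. Fix a base point $x$ in the fiber over $\nu$. The twisting maps $t_x$ and $t_{x\circ f}$ identify both fibers with $H^1(F,G_x)$, and pullback becomes the identity, as in the proof of Proposition~\ref{prop:comparison-formal-ab}. Then $\kappa_{\mathrm{rig}}$, being a bijection compatible with restriction, is bijective on fibers.

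With these two repairs your argument becomes the paper's proof: Fact~\ref{fact:cartesian} together with Corollary~\ref{prop:rigid}.
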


For an arbitrary local field of characteristic zero,
Theorem~\ref{thm:main-bis} gives the corresponding formula
for $H^1_{\mathrm{ab}}(\mathcal E,Z\to G)$.
It also recovers the full cohomology set from its
abelianization and its pullback to the rigid gerbe.

Over a number field, fix the choices defining
Kaletha's global rigid gerbe
$\mathcal E^{\mathrm{rig}}_{\dot V}$, with band
$P_{\dot V}$.
The global counterpart of Theorem~\ref{thm:main}
is the following.

\begin{theorem}[Theorem~\ref{cor:global-abelianized-formula}]
\label{thm:main-global}
Suppose that $F$ is a number field and that there exists
a morphism
$f\colon\mathcal E^{\mathrm{rig}}_{\dot V}\to\mathcal E$
over $\Gamma$, inducing an $F$-homomorphism
$\phi\colon P_{\dot V}\to D$ on the bands.
Then $f$ induces a bijection of pointed sets,
natural in $(Z\to G)$,
\[
H^1_{\mathrm{ab}}(\mathcal E,Z\to G)
\simeq{}
\left(
\varinjlim_{\substack{A\subseteq Z\\A\text{ finite}/F}}
\varinjlim_{(K,S)}
\frac{
\pi_1(G/A)[S_K,\dot S_K]_0^{N_{K/F}}
}{
I_{\Gamma_{K/F}}\bigl(\pi_1(G)[S_K]_0\bigr)
}
\right)
\underset{\operatorname{Hom}_F(P_{\dot V},Z)}{\times}
\operatorname{Hom}_F(D,Z).
\]
The maps to the base are rigid restriction and
$\nu\mapsto\nu\circ\phi$.
\end{theorem}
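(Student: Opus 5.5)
The plan is to deduce the global formula from Kaletha's global parametrization of rigid abelianized cohomology, combined with the comparison formalism of Kaletha--Ta\"ibi. The key observation is that an abelianized class on $\mathcal E$ is determined by two pieces of data: its pullback along $f$, and the homomorphism $\nu\in\operatorname{Hom}_F(D,Z)$. The first step is to write down the natural map
\[
H^1_{\mathrm{ab}}(\mathcal E,Z\to G)\longrightarrow H^1_{\mathrm{ab}}(\mathcal E^{\mathrm{rig}}_{\dot V},Z\to G)\underset{\operatorname{Hom}_F(P_{\dot V},Z)}{\times}\operatorname{Hom}_F(D,Z),
\]
given by $[x,\nu]\mapsto(\operatorname{ab}[x\circ f,\nu\circ\phi],\nu)$. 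It is compatible with the fiber product because the band homomorphism of $x\circ f$ is $\nu\circ\phi$. Naturality in $(Z\to G)$ is formal.

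The second step identifies the left factor. I will invoke Kaletha's global theorem, in the form recalled earlier in the paper, which gives
\[
H^1_{\mathrm{ab}}(\mathcal E^{\mathrm{rig}}_{\dot V},Z\to G)\simeq\varinjlim_{A}\varinjlim_{(K,S)}\pi_1(G/A)[S_K,\dot S_K]_0^{N_{K/F}}\big/I_{\Gamma_{K/F}}\bigl(\pi_1(G)[S_K]_0\bigr).
\]
Its restriction-to-band map is identified with the map to $\operatorname{Hom}_F(P_{\dot V},Z)$. For finite $Z$ this is the statement directly. For general $Z$ I pass to the colimit over finite $A\subseteq Z$. This requires knowing that $H^1_{\mathrm{ab}}(\mathcal E^{\mathrm{rig}}_{\dot V},Z\to G)=\varinjlim_A H^1_{\mathrm{ab}}(\mathcal E^{\mathrm{rig}}_{\dot V},A\to G)$. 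That holds because $P_{\dot V}$ is profinite, so any algebraic $\nu\colon P_{\dot V}\to Z$ has finite image, hence lands in some finite $A$ defined over $F$.

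The third step, and the main obstacle, is proving that the comparison map is bijective. I will argue via twisting and a long exact sequence.

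For \emph{surjectivity}, start with a rigid class $[y,\mu]$ and a homomorphism $\nu$ with $\nu\circ\phi=\mu$.
\begin{itemize}
\item Use the Kaletha--Ta\"ibi construction to produce some class on $\mathcal E$ with band map $\nu$. First form a cocycle on $\mathcal E$ valued in the torus $Z^\circ$, or in $Z$ extended by the induced torus, realizing $\nu$. The existence of $f$ guarantees that the obstruction class in $H^2(\Gamma,Z)$ associated with $\nu$ matches that of $\nu\circ\phi$, which vanishes on the rigid side because $[y,\mu]$ exists.
\item Pulled back, this class and $[y,\mu]$ differ by an element with trivial band map, that is, by an element of the ordinary $H^1_{\mathrm{ab}}(F,G)$.
\item Since $\mathcal E\to\Gamma$ also admits inflation of ordinary cocycles, twist by that element to hit $[y,\mu]$.
\end{itemize}

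For \emph{injectivity}, suppose two classes have the same $\nu$ and the same rigid pullback. Twisting by one of them, which is legitimate in abelianized cohomology since the coefficients are essentially the abelian crossed module $Z_{\ssc}\to Z$ or $T_{\ssc}\to T$, reduces the problem to classes with $\nu=0$. These come from $H^1_{\mathrm{ab}}(F,G)$. The key fact is that inflation followed by $f^*$ is the identity on $H^1_{\mathrm{ab}}(F,G)$. Therefore such a class is trivial if its pullback is.

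To make this rigorous, I would work with the exact sequence
\[
H^1_{\mathrm{ab}}(F,G)\to H^1_{\mathrm{ab}}(\mathcal E,Z\to G)\to\operatorname{Hom}_F(D,Z)\to H^2_{\mathrm{ab}}(F,G),
\]
which is available because abelianized cohomology is hypercohomology of a complex of tori. I would also use the analogous sequence for $\mathcal E^{\mathrm{rig}}_{\dot V}$, noting that the transgression on the $\mathcal E$ side composed with $\phi^*$ equals the rigid transgression. A five-lemma-type diagram chase then gives the bijection.

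The delicate point is the right end of this chase. I must check that $\phi^*$ intertwines the transgressions, so that the image of $\operatorname{Hom}_F(D,Z)$ in $H^2_{\mathrm{ab}}(F,G)$ is detected after pulling back along $\phi$. I would verify this at the level of the gerbe classes: $f$ pushes the rigid gerbe class forward to the class of $\mathcal E$ in $H^2(\Gamma,D)$.
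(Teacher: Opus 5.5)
Your first two steps match the paper, which proves the theorem by substituting Theorem~\ref{prop:global-rigid-formula} (extended to nonfinite $Z$ by Lemma~\ref{lem:abelianization-colimit}) into the comparison bijection \eqref{eq:global-formal-ab}. The gap is in your proof that the comparison map is bijective, specifically in surjectivity. You claim that $(\nu\circ\phi)_*(\xi^{\mathrm{rig}}_{\dot V})=\nu_*(\xi_{\mathcal E})\in H^2(F,Z)$ vanishes because some $[y,\mu]$ exists. This is false.

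Let $s$ be a normalized section of $\mathcal E^{\mathrm{rig}}_{\dot V}\to\Gamma$ with $2$-cocycle $c$, and put $g_\sigma=y_{s_\sigma}$. The cocycle identity gives $g_\sigma\,{}^\sigma g_\tau\,g_{\sigma\tau}^{-1}=\mu(c_{\sigma,\tau})$. So $\mu_*(\xi^{\mathrm{rig}}_{\dot V})$ is only the image of $[\bar g]\in H^1(F,G/Z)$ under the connecting map, and it vanishes only if $[\bar g]$ lifts to $H^1(F,G)$. This fails for exactly the classes rigid inner forms are designed to capture. For example, take $G=\mathrm{SL}_2$, $Z=\mu_2$, and a rigid class whose inner form comes from a nonsplit quaternion algebra; then $\mu_*(\xi^{\mathrm{rig}}_{\dot V})$ is that algebra's nonzero Brauer class. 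Moreover, $\nu$ need not factor through $Z^\circ$; for finite $Z$ one has $Z^\circ=1$. So a $Z$-valued cocycle on $\mathcal E$ realizing $\nu$ need not exist, and your twisting steps have nothing to start from.

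Your proposed rigorous fallback does not close the gap. Here $H^1_{\mathrm{ab}}(\mathcal E,Z\to G)$ is defined as a quotient of a nonabelian pointed set, not as hypercohomology of a complex of tori. The sequence \eqref{eq:inflation} deliberately has no $H^2$ term for noncommutative $G$. Exactness at $\operatorname{Hom}_F(D,Z)$ with an $H^2_{\mathrm{ab}}(F,G)$ term, and a torsor structure on the fibers, would require an identification with hypercohomology plus surjectivity results for ordinary abelianization, and you supply neither.

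What you are missing is that the required class can be written down directly, with no obstruction theory. Since $f$ lies over $\Gamma$, every element of $\mathcal E$ has the form $d\,f(w')$ with $d\in D(\bar F)$. Given $(y,\mu)$ with $\mu=\nu\circ\phi$, set
\[
x_{d\,f(w')}=\nu(d)\,y_{w'}.
\]
This is well defined. Any other expression of the same element is $d\,\phi(p)^{-1}f(pw')$ with $p\in P_{\dot V}(\bar F)$, and the right side is unchanged because $y_{pw'}=\mu(p)\,y_{w'}$ and $\mu=\nu\circ\phi$. The cocycle identity follows from centrality and $F$-rationality of $\nu$, and continuity follows by using a continuous section. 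Then $x\circ f=y$ and $x|_D=\nu$. The same conjugating element works on $\mathcal E$ as on $\mathcal E^{\mathrm{rig}}_{\dot V}$, since $\nu(d)$ is central. This is Fact~\ref{fact:cartesian}: the comparison map is already bijective on $H^1$.

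The abelianized statement then follows from your injectivity idea, carried out at the nonabelian level as in Proposition~\ref{prop:comparison-formal-ab}. We have $G_{x\circ f}=G_x$ and $((a\circ\pi)x)\circ f=(a\circ\pi')(x\circ f)$. So on each restriction fiber, pullback is the identity of $H^1(F,G_x)$ under the twisting bijections, and it matches the subsets coming from $H^1(F,G_x^{\mathrm{sc}})$. Hence the bijection of Fact~\ref{fact:cartesian} identifies the abelianization relations and descends to \eqref{eq:global-formal-ab}. No $H^2_{\mathrm{ab}}$ and no group structure on the fibers is needed.
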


Here the superscript $N_{K/F}$ denotes a norm kernel.
The inner limit runs over a cofinal tower of admissible
pairs $(K,S)$ with $K$ splitting a maximal $F$-torus of $G$.
The coefficient modules and transition maps are those of
Kaletha's global parametrization
\cite[Section~3.8]{Kal18a}, recalled in
Theorem~\ref{prop:global-rigid-formula}.

The full global cohomology set is recovered from its
abelianization and its real localizations.
This assertion holds for every Galois gerbe in our
setting, independently of the existence of a comparison.

\begin{theorem}[Proposition~\ref{prop:global-local-cartesian}
and Corollary~\ref{cor:global-totally-imaginary}]
\label{thm:main-real-localization}
Let $F$ be a number field and let $\mathcal E$ be a
Galois gerbe over $F$.
Abelianization and localization at the real places
give a cartesian diagram
\[
\begin{tikzcd}
H^1(\mathcal E,Z\to G)
  \ar[r,"\operatorname{ab}"]
  \ar[d,"\operatorname{loc}_\infty"']
&
H^1_{\mathrm{ab}}(\mathcal E,Z\to G)
  \ar[d,"\operatorname{loc}_\infty"]
\\
\displaystyle\prod_{v\ \mathrm{real}}
H^1(\mathcal E_v,Z\to G)
  \ar[r,"\prod_v\operatorname{ab}_v"]
&
\displaystyle\prod_{v\ \mathrm{real}}
H^1_{\mathrm{ab}}(\mathcal E_v,Z\to G).
\end{tikzcd}
\]
Here $\mathcal E_v$ is the localization of $\mathcal E$
over $F_v$, and base change of $G$ and $Z$ to $F_v$
is suppressed in the notation.
In particular, if $F$ is totally imaginary, then
\[
\operatorname{ab}\colon
H^1(\mathcal E,Z\to G)
\xrightarrow{\sim}
H^1_{\mathrm{ab}}(\mathcal E,Z\to G)
\]
is bijective.
Under the comparison hypothesis of
Theorem~\ref{thm:main-global}, its formula
therefore describes the full cohomology set.
\end{theorem}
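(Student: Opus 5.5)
We need the cartesian square over a number field. The plan is to compare fibers of $\operatorname{ab}$ with fibers of the local abelianization maps, using the twisting formalism. Abelianization is surjective, and two classes with the same abelianization differ by a class coming from $\ker^1$-type data in the simply connected cover. We reduce to ordinary Galois cohomology of simply connected groups via twisting, where the Hasse principle (Kneser, Harder, Chernousov) and the vanishing of $H^1$ for simply connected groups over non-Archimedean local fields do the work.

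\emph{Step 1 (twisting).} Fix a class $[x,\nu]\in H^1(\mathcal E,Z\to G)$. Let $G_x$ denote the inner form of $G$ determined by $x$; it depends only on the image of $x$ in $H^1(F,G_{\mathrm{ad}})$, since $\nu$ is central. Let $G_x^{\ssc}$ be the simply connected cover of its derived group. By the definition of $H^1_{\mathrm{ab}}$, the fiber of $\operatorname{ab}$ through $[x,\nu]$ is the image of the twisted map
\[
H^1(F,G_x^{\ssc})\to H^1(\mathcal E,Z\to G),\qquad [y]\mapsto[y\cdot x,\nu].
\]
This is the standard exact-sequence argument for $G^{\ssc}\to G$ on central cohomology, now applied to gerbes. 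More precisely, the fiber is the quotient of $H^1(F,G_x^{\ssc})$ by the action of the group of abelian classes that stabilize the point. The same holds at each real place $v$, with $\mathcal E_v$ and $G_{x,v}^{\ssc}$.

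\emph{Step 2 (surjectivity onto the fiber product).} Take $[x,\nu]$ and real local classes $c_v\in H^1(\mathcal E_v,Z\to G)$ with $\operatorname{ab}_v(c_v)=\operatorname{loc}_v\operatorname{ab}[x,\nu]$. By Step 1 applied locally, $c_v=[y_v\cdot x_v,\nu]$ for some $y_v\in H^1(F_v,G_x^{\ssc})$. Surjectivity of
\[
H^1(F,G_x^{\ssc})\to\prod_{v\ \mathrm{real}}H^1(F_v,G_x^{\ssc})
\]
(Kneser/Harder/Chernousov, together with $H^1(F_v,\,\cdot\,)=1$ at finite $v$) gives a global $y$ lifting the $y_v$. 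Then $[y\cdot x,\nu]$ maps to the given pair. Here we use that the abelian twist at $v$ is unchanged because $\operatorname{ab}$ kills $H^1(F,G_x^{\ssc})$.

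\emph{Step 3 (injectivity).} Suppose two classes have equal abelianizations and equal real localizations. By Step 1 they are $[x,\nu]$ and $[y\cdot x,\nu]$ for some $y\in H^1(F,G_x^{\ssc})$. Their localizations at each real $v$ agree, so $y_v$ lies in the image of the stabilizer group acting at $v$. I expect this to be the main obstacle: one must lift that local stabilizer ambiguity to a global one, or else argue that the fiber description is compatible with the action.

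The approach I would try first is the following. The relevant action is by $H^0$-type abelian data, namely the image of the connecting map from $Z$-twisted $\pi_0$ / abelian $H^0$ to $H^1(F_v,G^{\ssc}_x)$, coming from the crossed module $G^{\ssc}_x\to G_x$. I would use real approximation: $G_x(F)$ is dense in $\prod_{v\ \mathrm{real}}G_x(F_v)$, and $G_x$ is connected, so the abelian $H^0$ at real places is hit by global elements up to images from $G_x^{\ssc}(F_v)$, which act trivially. Modifying $y$ by such a global element then makes $y_v$ trivial at all real places. The Hasse principle for $G_x^{\ssc}$ then gives $y=1$.

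\emph{Totally imaginary case.} If $F$ is totally imaginary, the bottom row of the square is empty products, i.e.\ points. The cartesian property then says that $\operatorname{ab}$ is bijective. The final sentence of the statement follows by combining this with Theorem~\ref{thm:main-global}.
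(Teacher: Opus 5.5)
Steps~1 and~2 and the totally imaginary case follow the paper's reduction. Twisting identifies $\operatorname{ab}^{-1}(c)$ with $\operatorname{im}[H^1(F,G_x^{\mathrm{sc}})\to H^1(F,G_x)]$, compatibly with localization. Step~2 is correct, with one caveat on attribution: surjectivity of $H^1(F,G_x^{\mathrm{sc}})\to\prod_{v\ \mathrm{real}}H^1(F_v,G_x^{\mathrm{sc}})$ is a separate known theorem, whereas the Kneser--Harder--Chernousov Hasse principle only gives injectivity.

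\textbf{The gap is Step~3.} You must show the following: if $y\in H^1(F,G_x^{\mathrm{sc}})$ satisfies $\rho_*(y_v)=1$ in $H^1(F_v,G_x)$ for every real $v$, then $\rho_*(y)=1$. Your proposed mechanism, density of $G_x(F)$ in $\prod_v G_x(F_v)$, cannot do this. The image of $G_x(F_v)$ in the abelian $H^0$ is exactly the kernel of the connecting map to $H^1(F_v,G_x^{\mathrm{sc}})$, so elements approximating points of $G_x(F_v)$ never move $y_v$. The local classes you must globalize are precisely those outside that image.

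Here is a concrete example. Take $G_x=\mathrm{GL}_1(B)$ for a definite quaternion algebra $B$ over $\mathbb Q$. Then $H^1(\mathbb R,\mathrm{SL}_1(\mathbb H))\cong\mathbb R^\times/\mathbb R_{>0}$. Its nontrivial class dies in $H^1(\mathbb R,G_x)=1$ and comes from $-1\in\mathbb R^\times$, which is not a reduced norm. No element of $B^\times$ produces it; what is needed is $-1\in\mathbb Q^\times$ in the cocentre torus. Relatedly, your Step~1 phrase ``quotient by the action of abelian classes that stabilize the point'' is never made precise. You would need an orbit description of the fibres of $H^1(\cdot,G_x^{\mathrm{sc}})\to H^1(\cdot,G_x)$ that is compatible with localization and locally constant at real places.

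The paper does not prove this by hand. Lemma~\ref{lem:real-localization-image} imports Borovoi's cartesian square for ordinary abelianization. Its fibre over the neutral class is exactly the bijection $\operatorname{im}[H^1(F,H^{\mathrm{sc}})\to H^1(F,H)]\xrightarrow{\sim}\prod_{v\ \mathrm{real}}\operatorname{im}[H^1(F_v,H^{\mathrm{sc}})\to H^1(F_v,H)]$. Applied to $H=G_x$, this yields your Steps~2 and~3 at once.

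\textbf{Repair along your lines.}
\begin{enumerate}
\item Replace $G_x$ by a z-extension $G'_x$ with central induced kernel $C$. Since $H^1(F,C)=H^1(F_v,C)=0$ and fibres of $H^1(G'_x)\to H^1(G_x)$ are $H^1(C)$-orbits, the images of simply connected cohomology are unchanged, and now the derived group is simply connected.
\item Let $D=G'_x/G'^{\mathrm{der}}_x$. The fibres of $H^1(F,G'^{\mathrm{der}}_x)\to H^1(F,G'_x)$ are $D(F)$-orbits (Serre, I.5.5).
\item At a real $v$, the stabilizer of the trivial class is the open subgroup $\operatorname{im}(G'_x(F_v)\to D(F_v))$.
\item Real approximation for the torus $D$, not for $G_x$, gives $d\in D(F)$ with $(d\cdot y)_v=1$ at all real $v$.
\item The Hasse principle then gives $d\cdot y=1$, hence $\rho_*(y)=1$.
\end{enumerate}
Alternatively, observe that $\rho_*(y)\in\ker^1(F,G_x)$ has trivial abelianization, and invoke injectivity of abelianization on $\ker^1$ (Sansuc, Borovoi).
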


By Lemma~\ref{lem:existence-comparison}, an algebraic
$F$-homomorphism $\phi\colon D'\to D$ lifts to a
morphism of gerbes $\mathcal E'\to\mathcal E$
if and only if
\[
\phi_*(\xi_{\mathcal E'})=\xi_{\mathcal E}
\quad\text{in }H^2(F,D),
\]
where $\xi_{\mathcal E'}\in H^2(F,D')$ and
$\xi_{\mathcal E}\in H^2(F,D)$ are the corresponding
gerbe classes.
By Corollary~\ref{cor:comparison-exist-finite-type},
a comparison from the local or global rigid gerbe
exists whenever $D$ is of finite type, for a suitable
band map $\phi$.
This band map may be chosen to factor through
a finite $F$-subgroup of $D$.

The parametrizations depend on the chosen comparison
as described in Lemma~\ref{lem:comparison-dependence}.
For fixed $\phi$, changing the comparison by
$\alpha\in Z^1(F,D)$ multiplies the pulled-back cocycle
by the $\nu\circ\alpha$,
while leaving the band coordinate $\nu$ unchanged.
This dependence factors through $H^1(F,D)$.
In particular, for fixed $\phi$, the parametrizations
are independent of the comparison whenever $H^1(F,D)=0$.

Finally, we specialize to the Kottwitz gerbe
$\mathcal E^{\mathrm{iso}}$.
Over a non-Archimedean local field, the formula becomes
\[
H^1(\mathcal E^{\mathrm{iso}},Z\to G)
\simeq
\ker\left[
\pi_1(G)_\Gamma
\longrightarrow
\pi_1(G/Z)_{\Gamma,\mathrm{free}}
\right].
\]
Here $\mathrm{free}$ denotes the quotient by the torsion
subgroup; see Theorem~\ref{thm:kottwitz-local-explicit}.
The formula recovers $H^1(F,G)$ when $Z$ is finite
and Kottwitz's basic set $B(F,G)_{\mathrm{bas}}$
when $Z=Z(G)$.

Over a number field,
Corollary~\ref{cor:kottwitz-global-recovery} expresses
the abelianized cohomology of the Kottwitz gerbe as a
fiber product over $\operatorname{Hom}_F(P_{\dot V},Z)$.
Theorem~\ref{thm:kottwitz-global-explicit} identifies
this fiber product with the kernel of
$N_{G/Z}\circ q_*$, where $q\colon G\to G/Z$.
Corollary~\ref{cor:kottwitz-global-cartesian} then
recovers the full cohomology set from this kernel
and the real local cohomology sets.

Section~\ref{sec:gerbes} develops the comparison formalism,
abelianization, and the fundamental-group sequences for
central quotients.
Section~\ref{sec:local-formula} gives the formula over a local field and
ends with the local Kottwitz specialization.
Section~\ref{sec:number-field} gives the formula over
a number field, recovery from real localizations,
and the global Kottwitz specialization.

{\sc Acknowledgements.}
The author would like to thank Mikhail Borovoi for suggesting this problem and
for many helpful discussions.

\section{Cohomology of a Galois gerbe}
\label{sec:gerbes}

\subsection{Galois gerbes and algebraic cohomology}

Let $F$ be a field of characteristic 0, let $\bar F$ be a fixed algebraic closure of $F$, and let $\Gamma = \operatorname{Gal}(\bar F/F)$ be the absolute Galois group.
By an $F$-group we mean an $F$-group scheme.
An $F$-group $D$ is a \textit{pro-algebraic group of multiplicative type} if $D \coloneqq \varprojlim_{i} D_i$ is identified with the inverse limit of $F$-groups $D_i$ of multiplicative type and of finite type with surjective transition maps between them.
Note that $D$ is not of finite type in general.

We equip $D_i(\bar F)$ with the discrete topology and $D(\bar F) = \varprojlim_i D_i(\bar F)$ with  the inverse limit topology.
For $\sigma \in \Gamma$ and $d \in D(\bar F)$, we denote the image of $d$ under $\sigma$ by ${}^\sigma\! d$.
We denote by $X^*(D) \coloneqq \operatorname{Hom}_{\bar F}(D_{\bar F},\mathbb{G}_{m, \bar F})$ the character group of $D$.

A \textit{Galois gerbe} (or an $F$\textit{-gerbe}) \textit{banded by} $D$ is an extension
\[
1 \to D(\bar F) \to \mathcal{E} \labelt{\pi} \Gamma \to 1
\]
of topological groups such that $D(\bar F)$ with the above defined topology is identified with a subgroup of $\mathcal{E}$ endowed with the subspace topology, $\pi$ admits a continuous set-theoretic section, and
$w d w^{-1} = {}^{\pi(w)} d$ for every $w \in \mathcal{E}$.
The existence of a continuous section implies that
$\pi$ is open and that the induced bijection
$\mathcal E/D(\bar F)\to\Gamma$ is a homeomorphism.

Let $G$ be a connected reductive group over $F$.
We equip $G(\bar F)$ with the discrete topology, and let the gerbe $\mathcal{E}$ act on $G(\bar F)$ via $\Gamma$, which we denote by ${}^w g \coloneqq {}^{\pi(w)}g$.
Following \cite[Section 2]{Kot14}, we define an {\em algebraic $1$-cocycle} of $\E$ in $G$ to be a pair $(x,\nu)$ where
$x\colon \E\to G(\bar F)$ is a continuous cocycle, that is
\begin{equation}\label{e:cocycle-cond}
x_{w_1w_2}= x_{w_1} {}^{w_1}\hm x_{w_2}\quad\ \text{for}\ \, w_1, w_2\in\E,    
\end{equation}
and $\nu\colon D_{\bar F} \to G_{\bar F}$ is an algebraic $\bar F$-homomorphism such that $x_d = \nu(d)$ for every $d \in D(\bar F)$.
We denote by $Z^1(\mathcal{E},G)$ the set of algebraic 1-cocycles.
For an algebraic central $F$-subgroup $Z$ of $G$, we define
$Z^1(\mathcal{E},Z \to G)$ to be the subset of
$Z^1(\mathcal{E},G)$ consisting of pairs $(x,\nu)$ for which
$\nu$ factors through $Z_{\bar F}$.
We consider the \textit{basic} subset $Z^1_{\rm bas}(\mathcal{E},G) \coloneqq Z^1(\mathcal{E},Z(G)\to G)$ where $Z(G)$ denotes the center of $G$.

There is a natural action of $G(\bar F)$ on $Z^1(\mathcal{E},G)$, where $g \in G(\bar F)$ sends $(x,\nu)$ to 
\begin{equation}\label{eq:act-G-to-Z1}
    (w \mapsto g x_w {}^w\! g^{-1},\Inn(g) \circ \nu).    
\end{equation}
This action preserves the subsets $Z^1(\mathcal{E},Z \to G)$ and $Z^1_{\mathrm{bas}}(\mathcal{E},G)$.
We define $H^1(\mathcal{E},G)$ (resp. $H^1(\mathcal{E},Z \to G)$, $H^1_{\mathrm{bas}}(\mathcal{E},G)$) to be the quotient of $Z^1(\mathcal{E},G)$ (resp. $Z^1(\mathcal{E},Z \to G)$, $Z^1_{\mathrm{bas}}(\mathcal{E},G)$)
by the action of $G(\bar F)$.
When no confusion can arise, we write $x$ for $(x,\nu)$
and $[x]$ for $[x,\nu]$.

\subsection{Comparison of gerbes}
\label{subsec:gerbe-comparison}

Let $\mathcal E'$ and $\mathcal E$ be Galois gerbes over
$F$, banded by $D'$ and $D$, respectively.
For an algebraic $F$-homomorphism $\phi\colon D'\to D$,
a \emph{morphism of gerbes inducing $\phi$} is a
continuous homomorphism
$f\colon\mathcal E'\to\mathcal E$ making the diagram
\[
\begin{tikzcd}
1 \ar[r]
& D'(\bar F) \ar[r] \ar[d,"\phi"]
& \mathcal E' \ar[r,"\pi'"] \ar[d,"f"]
& \Gamma \ar[r] \ar[d,equal]
& 1 \\
1 \ar[r]
& D(\bar F) \ar[r]
& \mathcal E \ar[r,"\pi"]
& \Gamma \ar[r]
& 1
\end{tikzcd}
\]
commute.
We also call such an $f$ a \emph{comparison morphism} of gerbes.

By \cite[Section 3.1]{KT23}, the group $H^2(F,D) \coloneqq H^2_{\mathrm{cont}}(\Gamma,D(\bar F))$
classifies Galois gerbes banded by $D$, up to isomorphisms
inducing the identity on both $D$ and $\Gamma$.
Let $\xi_{\mathcal E}\in H^2(F,D)$ be the class corresponding to the isomorphism class 
of $\mathcal E$.

\begin{lemma}\label{lem:existence-comparison}
Let $\phi\colon D'\to D$ be an algebraic
$F$-homomorphism.
There exists a morphism
$f\colon\mathcal E'\to\mathcal E$ inducing $\phi$
if and only if
\[
\phi_*(\xi_{\mathcal E'})=\xi_{\mathcal E}
\quad\text{in }H^2(F,D).
\]
\end{lemma}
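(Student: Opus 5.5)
The plan is to work with continuous sections and the $2$-cocycles they determine, since the correspondence of \cite[Section~3.1]{KT23} is constructed exactly this way. Choose continuous set-theoretic sections $s'\colon\Gamma\to\mathcal E'$ and $s\colon\Gamma\to\mathcal E$ of $\pi'$ and $\pi$. Define the $2$-cocycles
\[
c'_{\sigma,\tau}=s'(\sigma)s'(\tau)s'(\sigma\tau)^{-1}\in D'(\bar F),
\qquad
c_{\sigma,\tau}=s(\sigma)s(\tau)s(\sigma\tau)^{-1}\in D(\bar F).
\]
These represent $\xi_{\mathcal E'}$ and $\xi_{\mathcal E}$. Both are continuous because the sections and the multiplication are continuous and $D(\bar F)$ carries the subspace topology. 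Changing a section changes the cocycle by a continuous coboundary.

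\emph{Necessity.} Suppose $f$ is a morphism inducing $\phi$. Then $f\circ s'$ is a continuous section of $\pi$, because $\pi\circ f=\pi'$. Applying $f$ to $c'_{\sigma,\tau}$ and using $f|_{D'(\bar F)}=\phi$ gives
\[
\phi(c'_{\sigma,\tau})=(f s')(\sigma)\,(f s')(\tau)\,(f s')(\sigma\tau)^{-1}.
\]
The right-hand side is the $2$-cocycle of $\mathcal E$ attached to the section $f\circ s'$, so it represents $\xi_{\mathcal E}$. Hence $\phi_*\xi_{\mathcal E'}=\xi_{\mathcal E}$.

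\emph{Sufficiency.} Suppose $\phi_*\xi_{\mathcal E'}=\xi_{\mathcal E}$. Then there is a continuous cochain $a\colon\Gamma\to D(\bar F)$ with
\[
\phi(c'_{\sigma,\tau})=a_\sigma\,{}^\sigma a_\tau\,a_{\sigma\tau}^{-1}\,c_{\sigma,\tau}.
\]
Replacing $s$ by the section $\sigma\mapsto a_\sigma s(\sigma)$, which is still continuous, we may assume $\phi(c')=c$ on the nose. Every element of $\mathcal E'$ can be written uniquely as $d\,s'(\sigma)$ with $d\in D'(\bar F)$, and we define
\[
f\bigl(d\,s'(\sigma)\bigr)=\phi(d)\,s(\sigma).
\]
The homomorphism property follows from three facts: $\phi$ is defined over $F$, so it is $\Gamma$-equivariant on $\bar F$-points; conjugation by $s'(\sigma)$ and by $s(\sigma)$ acts on the bands through $\sigma$; and $\phi(c')=c$. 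Explicitly, both sides of the identity to check reduce to
\[
\phi(d_1)\,{}^{\sigma}\phi(d_2)\,c_{\sigma,\tau}\,s(\sigma\tau).
\]
Commutativity of the diagram is then immediate from the definition of $f$.

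\emph{Continuity of $f$.} This is the step I expect to need the most care. The map $\mathcal E'\to D'(\bar F)\times\Gamma$ sending $w\mapsto\bigl(w\,s'(\pi'(w))^{-1},\,\pi'(w)\bigr)$ is continuous, with continuous inverse given by multiplication, so it is a homeomorphism. The same holds for $\mathcal E$ with the section $s$. In these coordinates $f$ becomes $\phi\times\mathrm{id}$. So it suffices to show that $\phi\colon D'(\bar F)\to D(\bar F)$ is continuous for the inverse-limit topologies. Since $D_j$ is of finite type, the composite $D'\to D\to D_j$ factors through some $D'_i$. On $\bar F$-points this factorization gives a map $D'_i(\bar F)\to D_j(\bar F)$ of discrete sets, which is continuous. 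Therefore each projection of $\phi$ is continuous, and so $\phi$ is continuous into the inverse limit.
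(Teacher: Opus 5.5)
Your proof is correct and follows essentially the same route as the paper. Necessity is the paper's coboundary computation with $b_\sigma=f(s'_\sigma)s_\sigma^{-1}$, which is exactly the change of section from $s$ to $f\circ s'$. Sufficiency is the paper's map $w'\mapsto\phi\bigl(w'(s'_\sigma)^{-1}\bigr)a_\sigma s_\sigma$, with the cochain absorbed into the section. Your explicit check that $\phi$ is continuous on $\bar F$-points fills in a step the paper takes for granted.

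One small point: you did not normalize the sections, so $f|_{D'(\bar F)}=\phi$ is not literally immediate from the definition. It uses $\phi(s'(1))=s(1)$, which follows from $\phi(c'_{1,1})=c_{1,1}$ because $c'_{1,1}=s'(1)$ and $c_{1,1}=s(1)$.
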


\begin{proof}
Choose continuous set-theoretic sections $s'$ and $s$ of the projections $\pi' \colon \mathcal E'\to\Gamma$ and $\pi \colon \mathcal E\to\Gamma$ respectively, normalized by $s'_1=s_1=1$, and let
\[
c'_{\sigma,\tau}
=s'_\sigma s'_\tau(s'_{\sigma\tau})^{-1} \in D'(\bar F),
\qquad
c_{\sigma,\tau}
=s_\sigma s_\tau s_{\sigma\tau}^{-1} \in D(\bar F).
\]
These are continuous $2$-cocycles representing
$\xi_{\mathcal E'}$ and $\xi_{\mathcal E}$ respectively.

Suppose that $f\colon\mathcal E'\to\mathcal E$ is a morphism over $\Gamma$ inducing $\phi$ on the bands.
Define
\[
b_\sigma \coloneqq f(s'_\sigma)s_\sigma^{-1}\in D(\bar F).
\]
Then $b$ is a continuous $1$-cochain with $b_1=1$.
Since $s_\sigma b_\tau s_\sigma^{-1} = {}^\sigma b_\tau$, we obtain
\[
b_\sigma {}^\sigma b_\tau s_\sigma s_\tau
=
b_\sigma s_\sigma b_\tau s_\tau
=f(s'_\sigma)f(s'_\tau)
=f(c'_{\sigma,\tau})f(s'_{\sigma\tau})
=\phi(c'_{\sigma,\tau})b_{\sigma\tau}s_{\sigma\tau}.
\]
Consequently, we have
$\phi(c'_{\sigma,\tau})c_{\sigma,\tau}^{-1}
=b_\sigma\,{}^\sigma b_\tau\,b_{\sigma\tau}^{-1}$ by the commutativity of $D(\bar F)$,
which proves
$\phi_*(\xi_{\mathcal E'})=\xi_{\mathcal E}$.

Conversely, suppose that
$\phi_*(\xi_{\mathcal E'})=\xi_{\mathcal E}$.
Then there exists a continuous $1$-cochain $b\colon\Gamma\to D(\bar F)$ such that
\[
\phi(c'_{\sigma,\tau})b_{\sigma\tau}
=b_\sigma\,{}^\sigma b_\tau\,c_{\sigma,\tau}
\]
for all $\sigma,\tau\in\Gamma$.
Since both $2$-cocycles are normalized, evaluation at
$(1,1)$ gives $b_1=1$.

For $w'\in\mathcal E'$, set $\sigma=\pi'(w')$ and define
\[
f(w')
=
\phi\bigl(w'(s'_\sigma)^{-1}\bigr)b_\sigma s_\sigma.
\]
This is well defined since
$w'(s'_\sigma)^{-1}\in D'(\bar F)$.
The map
$w'\mapsto w'(s'_{\pi'(w')})^{-1} \in D'(\bar F)$
is continuous, since $D'(\bar F)$ carries the subspace topology from $\mathcal E'$.
Together with the continuity of $\phi$ on $\bar F$-points,
this shows that $f$ is continuous.
Moreover, $\pi\circ f=\pi'$ and $f|_{D'(\bar F)}=\phi$.

It remains to show that $f$ is a homomorphism.
Let $w'_1,w'_2\in\mathcal E'$, and let
$\sigma=\pi'(w'_1)$ and $\tau=\pi'(w'_2)$.
Then we obtain
\[
w'_1w'_2(s'_{\sigma\tau})^{-1}
=
\bigl(w'_1(s'_\sigma)^{-1}\bigr)
\,{}^\sigma\!\bigl(w'_2(s'_\tau)^{-1}\bigr)
\,c'_{\sigma,\tau}.
\]
Applying $\phi$ and using its $\Gamma$-equivariance, we obtain
\[
\begin{aligned}
f(w'_1 w'_2)
&=
\phi\bigl(w'_1(s'_\sigma)^{-1}\bigr)
\,{}^\sigma\!\phi\bigl(w'_2(s'_\tau)^{-1}\bigr)
\,\phi(c'_{\sigma,\tau})b_{\sigma\tau}s_{\sigma\tau}\\
&=
\phi\bigl(w'_1(s'_\sigma)^{-1}\bigr)b_\sigma s_\sigma
\phi\bigl(w'_2(s'_\tau)^{-1}\bigr)b_\tau s_\tau\\
&=f(w'_1)f(w'_2).
\end{aligned}
\]
Here the second equality follows from the equation satisfied by $b$,
the commutativity of $D(\bar F)$, and
$s_\sigma d s_\sigma^{-1}={}^\sigma d$.
Thus $f$ is the desired homomorphism.
\end{proof}

Fix an algebraic $F$-homomorphism $\phi\colon D'\to D$
and a morphism $f\colon\mathcal E'\to\mathcal E$ over
$\Gamma$ inducing $\phi$.
Pullback induces a map of pointed sets
\[
f^*\colon H^1(\mathcal E,G)\longrightarrow H^1(\mathcal E',G),
\qquad
[x,\nu]\longmapsto[x\circ f,\nu\circ\phi_{\bar F}].
\]
We describe explicitly how pullback depends on the choice of $f$,
following \cite[Section~3.1]{KT23}.

\begin{lemma}[{\cite[Section~3.1]{KT23}}]
\label{lem:comparison-dependence}
Let $f \colon \mathcal{E}' \to \mathcal{E}$ be a morphism of gerbes over $\Gamma$ inducing $\phi$.
\begin{enumerate}
\item
The set of morphisms $\mathcal E'\to\mathcal E$ inducing
$\phi$ is a torsor under $Z^1(F,D)$.
More precisely, every such morphism $f'$ has the form
\[
f'=a_\alpha\circ f,
\qquad
a_\alpha(w)=\alpha_{\pi(w)}w,
\]
for a unique $\alpha\in Z^1(F,D)$.
Modulo conjugation by $D(\bar F)$, these morphisms form
a torsor under $H^1(F,D)$.

\item
Pullback along $a_\alpha$ defines an action
\[
[\alpha]\cdot[x,\nu]=[x\circ a_\alpha,\nu]
\]
of $H^1(F,D)$ on $H^1(\mathcal E,G)$.
At the cocycle level, we have
$(x\circ a_\alpha)_w
=\nu\bigl(\alpha_{\pi(w)}\bigr)x_w$.
If $f'=a_\alpha\circ f$, then
\[
(f')^*[x,\nu]
=f^*\bigl([\alpha]\cdot[x,\nu]\bigr).
\]
The action preserves $H^1(\mathcal E,Z\to G)$ and
$H^1_{\mathrm{bas}}(\mathcal E,G)$, and pullback maps
these subsets to $H^1(\mathcal E',Z\to G)$ and
$H^1_{\mathrm{bas}}(\mathcal E',G)$, respectively.

\item
If $H^1(F,D)=1$, the map $f^*$ depends only on $\phi$.
In particular, the cohomology sets associated with different
extensions representing a fixed class in $H^2(F,D)$ are
canonically identified.
The same statements hold for
$H^1(\mathcal E,Z\to G)$ and
$H^1_{\mathrm{bas}}(\mathcal E,G)$.
\end{enumerate}
\end{lemma}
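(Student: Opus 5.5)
For part (1), I would fix $f$ and take any other morphism $f'$ inducing $\phi$. For $w'\in\mathcal E'$, set $\beta(w')\coloneqq f'(w')f(w')^{-1}$. Since both maps lie over $\Gamma$, $\beta(w')\in D(\bar F)$. Since both restrict to $\phi$ on $D'(\bar F)$ and $D(\bar F)$ is abelian and normal, $\beta(d'w')=\beta(w')$. So $\beta$ factors through $\pi'$ and defines a continuous map $\alpha\colon\Gamma\to D(\bar F)$ with $f'(w')=\alpha_{\pi'(w')}f(w')$.

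Expanding $f'(w_1'w_2')=f'(w_1')f'(w_2')$ and using $f(w_1')\alpha_\tau f(w_1')^{-1}={}^\sigma\alpha_\tau$ gives $\alpha_{\sigma\tau}=\alpha_\sigma{}^\sigma\alpha_\tau$, so $\alpha\in Z^1(F,D)$. Conversely, for any $\alpha\in Z^1(F,D)$, the map $a_\alpha(w)=\alpha_{\pi(w)}w$ is a continuous endomorphism of $\mathcal E$ over $\Gamma$ that is the identity on $D(\bar F)$; this is the same computation run backwards. Uniqueness of $\alpha$ is clear.

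Next I would handle conjugation by $d\in D(\bar F)$. A direct computation gives $\operatorname{Inn}(d)\circ f=a_{\alpha}\circ f$ with $\alpha_\sigma=d\,{}^\sigma d^{-1}$, a coboundary. Hence morphisms modulo $D(\bar F)$-conjugacy form an $H^1(F,D)$-torsor, provided two cohomologous $\alpha$ give conjugate morphisms. That is the same identity read in reverse.

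For part (2), the cocycle formula follows from $x_{\alpha_\sigma w}=x_{\alpha_\sigma}\,{}^{\alpha_\sigma}x_w=\nu(\alpha_\sigma)x_w$, since $D$ acts trivially through $\Gamma$. The pair $(x\circ a_\alpha,\nu)$ is again an algebraic cocycle because $a_\alpha$ is the identity on the band.

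Well-definedness on classes: conjugating $(x,\nu)$ by $g$ replaces $\nu$ by $\operatorname{Inn}(g)\circ\nu$, and $(gx_{w}{}^wg^{-1})\circ a_\alpha=g\nu(\alpha_\sigma)x_w{}^wg^{-1}$, which is the conjugate by $g$ of $\nu(\alpha_\sigma)x_w$. Changing $\alpha$ by a coboundary $d\,{}^\sigma d^{-1}$ amounts to conjugating $x$ by $\nu(d)$: indeed $\nu(d)x_w\,{}^w\nu(d)^{-1}=\nu(d)x_w x_{w}^{-1}\dots$. More directly, $a_\alpha=\operatorname{Inn}(d)$, and $x\circ\operatorname{Inn}(d)$ equals the conjugate of $x$ by $x_d=\nu(d)$, by the cocycle identity. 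This is the one step I would check carefully. The action property $a_{\alpha\beta}=a_\alpha a_\beta$ is immediate. The relation $(f')^*=f^*([\alpha]\cdot-)$ is just $x\circ a_\alpha\circ f$ with the band map unchanged, since $\phi'=\phi$. Preservation of the central and basic subsets holds because $\nu$ is unchanged and pullback composes $\nu$ with $\phi$.

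For part (3), if $H^1(F,D)=1$, then part (2) shows $f^*$ is independent of $f$. For two gerbes representing the same class, Lemma~\ref{lem:existence-comparison} with $\phi=\mathrm{id}$ gives isomorphisms over $\Gamma$ in both directions. These induce mutually inverse bijections, since the composites are of the form $a_\alpha$, which act trivially. The identifications are therefore canonical. The main subtlety throughout is the coboundary-invariance computation in (2).
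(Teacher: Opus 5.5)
Your proposal is correct and follows essentially the same route as the paper. It uses the same difference cocycle $\alpha_\sigma=f'(w')f(w')^{-1}$, identifies $D(\bar F)$-conjugation with composition by $a_\alpha$ for coboundaries $\alpha$, and uses the cocycle identity to show that $x\circ\operatorname{Inn}(d)$ is the conjugate of $x$ by $x_d=\nu(d)$. The one point you should state explicitly, as the paper does, is that $\operatorname{Inn}(\nu(d))\circ\nu=\nu$ because $D$ is commutative, so this conjugation leaves the band component $\nu$ unchanged.
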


\begin{proof}
Let $f'$ be another morphism inducing $\phi$.
For $\sigma\in\Gamma$, choose $w'\in\mathcal E'$ above
$\sigma$ and let
\[
\alpha_\sigma=f'(w')f(w')^{-1}\in D(\bar F).
\]
Since $f$ and $f'$ agree on $D'(\bar F)$ and $D$ is
commutative, this expression is independent of the lift.
Evaluating along a continuous section of $\pi'$ shows
that $\alpha$ is continuous.
For lifts $w'_1,w'_2$ of $\sigma,\tau$, respectively, we obtain
\[
\alpha_{\sigma\tau}
=f'(w'_1w'_2)f(w'_1w'_2)^{-1}
=\alpha_\sigma f(w'_1)\alpha_\tau f(w'_1)^{-1}=\alpha_\sigma\,{}^\sigma\alpha_\tau.
\]
Thus $\alpha\in Z^1(F,D)$.
Conversely, the cocycle identity \eqref{e:cocycle-cond} shows that every
$\alpha\in Z^1(F,D)$ defines a morphism inducing $\phi$
by
$w'\mapsto\alpha_{\pi'(w')}f(w')$.

For $\alpha\in Z^1(F,D)$, define
\[
a_\alpha\colon\mathcal E\longrightarrow\mathcal E,
\qquad
a_\alpha(w)=\alpha_{\pi(w)}w.
\]
These are precisely the automorphisms fixing both the
band and the quotient, and $f'=a_\alpha\circ f$.
Two cocycles $\alpha$ and $\beta$ are cohomologous precisely if
$\beta_\sigma=d^{-1}\alpha_\sigma\,{}^\sigma d$
for some $d\in D(\bar F)$.
For $w\in\mathcal E$ above $\sigma$, this equality is equivalent to
\[
a_\beta(w)
=d^{-1}\alpha_\sigma w d
=\bigl(\operatorname{Int}(d^{-1})\circ a_\alpha\bigr)(w).
\]
The same computation after composition with $f$ completes the proof of Assertion (1).

For an algebraic cocycle $(x,\nu)$ and $d\in D(\bar F)$, the cocycle identity \eqref{e:cocycle-cond} gives
\[
x_{d^{-1}wd}
=\nu(d)^{-1}x_w\,{}^{\pi(w)}\nu(d).
\]
Since $\nu(d)$ centralizes the image of $\nu$, the pairs
$(x\circ\operatorname{Int}(d^{-1}),\nu)$ and $(x,\nu)$
are cohomologous via $\nu(d)^{-1}$.
Thus pullback along $\operatorname{Int}(d^{-1})$ acts
trivially on $H^1(\mathcal E,G)$, and
\[
[\alpha]\cdot[x,\nu]
\coloneqq[x\circ a_\alpha,\nu]
\]
depends only on $[\alpha]\in H^1(F,D)$.
It defines an action since
\[
a_{\alpha\beta}
=a_\alpha\circ a_\beta
=a_\beta\circ a_\alpha.
\]

Explicitly, the cocycle identity \eqref{e:cocycle-cond} gives
$(x\circ a_\alpha)_w
=\nu\bigl(\alpha_{\pi(w)}\bigr)x_w$.
For $f'=a_\alpha\circ f$, we obtain
\[
(f')^*[x,\nu]
=[x\circ a_\alpha\circ f,\nu\circ\phi_{\bar F}]
=f^*\bigl([\alpha]\cdot[x,\nu]\bigr).
\]
The action fixes $\nu$, while pullback replaces it by
$\nu\circ\phi_{\bar F}$.
Both therefore preserve the central and basic conditions.

If $H^1(F,D)=1$, the action is trivial, proving independence
of the lift $f$.
For two extensions representing the same class in
$H^2(F,D)$, choose an isomorphism inducing the identity
on the band and quotient.
Its pullback is independent of this choice, and these
identifications are compatible with composition.
This proves Assertions (2) and (3).
\end{proof}

\begin{proposition}\label{prop:comparision-exist-finite-type}
Let
$\mathcal E'$ be a Galois gerbe over $F$ banded by a
pro-algebraic $F$-group $D'$ of multiplicative type.
The following conditions are equivalent.
\begin{enumerate}
\item
For every finite $F$-group $A$ of multiplicative type,
the map
\[
\operatorname{Hom}_F(D',A)\longrightarrow H^2(F,A),
\qquad
\alpha\longmapsto\alpha_*(\xi_{\mathcal E'})
\]
is surjective.

\item
Every Galois gerbe $\mathcal E$ over $F$ banded by an
algebraic $F$-group $D$ of multiplicative type admits
a morphism $\mathcal E'\to\mathcal E$ over $\Gamma$ inducing an algebraic $F$-homomorphism on bands.

\item
Every such gerbe $\mathcal E$ admits a morphism
$\mathcal E'\to\mathcal E$ whose algebraic band map factors through
a finite $F$-subgroup of $D$.
\end{enumerate}
\end{proposition}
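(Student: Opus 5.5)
By Lemma~\ref{lem:existence-comparison}, a morphism $\mathcal E'\to\mathcal E$ inducing $\phi\colon D'\to D$ exists if and only if $\phi_*(\xi_{\mathcal E'})=\xi_{\mathcal E}$. So all three conditions are statements about which classes in $H^2(F,D)$ are of the form $\phi_*(\xi_{\mathcal E'})$. The implication (3)$\Rightarrow$(2) is trivial. For (2)$\Rightarrow$(1), take $A$ finite of multiplicative type and $\xi\in H^2(F,A)$, and let $\mathcal E$ be the gerbe banded by $A$ with class $\xi$; such a gerbe exists by the classification of \cite[Section~3.1]{KT23}, realized by the twisted product built from a continuous normalized $2$-cocycle. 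Condition (2) gives a morphism $\mathcal E'\to\mathcal E$ inducing some $\alpha\colon D'\to A$, and Lemma~\ref{lem:existence-comparison} then gives $\alpha_*(\xi_{\mathcal E'})=\xi$.

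The substantive direction is (1)$\Rightarrow$(3). Let $D$ be algebraic of multiplicative type and $\xi=\xi_{\mathcal E}\in H^2(F,D)$. The key claim is that $\xi$ lies in the image of $H^2(F,A)\to H^2(F,D)$ for some finite $F$-subgroup $A\subseteq D$. Granting this, write $\xi=\iota_*(\eta)$ with $\iota\colon A\hookrightarrow D$. Condition (1) gives $\alpha\colon D'\to A$ with $\alpha_*(\xi_{\mathcal E'})=\eta$. Then $\phi=\iota\circ\alpha$ satisfies $\phi_*(\xi_{\mathcal E'})=\xi$, and Lemma~\ref{lem:existence-comparison} produces the required morphism, whose band map factors through $A$.

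To prove the claim, I use that $H^2(F,D)$ is torsion, being higher Galois cohomology of a profinite group, so $n\xi=0$ for some $n\ge1$. I would then use the Kummer-type sequence
\[
1\to D[n]\to D\xrightarrow{n} D\to D/nD\to 1.
\]
Over characteristic $0$, $D[n]$ is finite of multiplicative type. The map $n\colon D\to D$ has image $nD$, which contains $D^\circ$ since multiplication by $n$ is surjective on tori; so the quotient $D/nD$ is finite. Split the sequence into $1\to D[n]\to D\to nD\to 1$ and $1\to nD\to D\to D/nD\to1$. The vanishing of $n\xi$ only says that the image of $\xi$ in $H^2(F,nD)$ maps to zero in $H^2(F,D)$, so it is not immediately zero. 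This is the main obstacle, caused by the disconnected part of $D$.

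The first thing I would try avoids $n$-torsion bookkeeping. Choose a finite subgroup $A\subseteq D$ with $A\cdot D^\circ=D$. This exists: take $A=D[m]\cdot B$, where $B$ is a finite subgroup surjecting onto $\pi_0(D)$, obtainable since $D^\circ$ is divisible. Then $D/A$ is a quotient of the torus $D^\circ/(A\cap D^\circ)$, hence a torus $T$. The exact sequence $H^2(F,A)\to H^2(F,D)\to H^2(F,T)$ reduces the claim to killing the image $\bar\xi\in H^2(F,T)$. The class $\bar\xi$ is $n$-torsion, so by the Kummer sequence for $T$ it comes from $H^2(F,T[n])$, say $\bar\xi=\delta$-image of some $\zeta$. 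Replacing $A$ by the preimage $A'$ of $T[n]$ in $D$, which is still finite, the image of $\xi$ in $H^2(F,D/A')=H^2(F,T/T[n])$ is $n\bar\xi=0$ under the identification $T/T[n]\cong T$ via $n$. Thus $\xi$ lifts to $H^2(F,A')$. This settles the claim and completes the proof.
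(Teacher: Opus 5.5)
Your proof is correct and is essentially the paper's argument. Take $A=D[m]$ with $m$ the exponent of $\pi_0(D)$: it already surjects onto $\pi_0(D)$ by divisibility of $D^\circ$, so no auxiliary $B$ is needed, and any such $B$ would have to be made Galois-stable for $A$ to be an $F$-subgroup. Then $[m]$ identifies $D/A$ with $D^\circ$, your $A'$ is exactly the paper's $D[mn]$, and the only differences are cosmetic. You use torsion of $\xi$ itself in $H^2(F,D)$, whereas the paper proves torsion of $[m]_*\xi\in H^2(F,D^\circ)$ via restriction--corestriction, and your lift of $\bar\xi$ to $H^2(F,T[n])$ is never actually used.
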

\begin{proof}
The implication $(3)\Rightarrow(2)$ is obvious.
We prove the implications
$(1)\Rightarrow(3)$ and $(2)\Rightarrow(1)$.

Assume~(1) and let $\mathcal E$ be a Galois gerbe
banded by an algebraic $F$-group $D$ of multiplicative
type.
Let $\xi=\xi_{\mathcal E}\in H^2(F,D)$.
We first construct a finite $F$-subgroup $A\subseteq D$
and a class $\xi_A\in H^2(F,A)$ whose image is $\xi$.

Let $T=D^\circ$.
Since $D$ is of multiplicative type and $F$ has
characteristic zero, $T$ is a torus and $D/T$ is finite.
Choose a positive integer $m$ annihilating $D/T$.
The power map $[m]\colon D\to D$ therefore factors through
$T$.
The resulting homomorphism
$[m]\colon D\to T$
is surjective, since its restriction to $T$ is the
surjective power map $[m]\colon T\to T$.

Consider the class
$\eta=[m]_*(\xi)\in H^2(F,T)$.
We show that $\eta$ is torsion.
Choose a continuous $2$-cocycle
$c\colon\Gamma\times\Gamma\longrightarrow T(\bar F)$
representing $\eta$ and normalized by $c(1,1)=1$.
Since $T(\bar F)$ is discrete, there are neighborhoods $V_1,V_2$
of the identity in $\Gamma$ such that
$c(V_1\times V_2)=\{1\}$.
Choose an open subgroup $U\subseteq V_1\cap V_2$.
The restriction of $c$ to $U\times U$ is trivial, hence we have
$\operatorname{res}^{\Gamma}_{U}(\eta)=0$.
The restriction--corestriction identity gives
\[
[\Gamma:U]\eta
=
\operatorname{cor}^{\Gamma}_{U}
\operatorname{res}^{\Gamma}_{U}(\eta)
=0,
\]
hence there is a positive integer $n$ such that
$n\eta=0$.

Set $A=D[mn]$, the kernel of multiplication by $mn$
on $D$, which is a finite $F$-subgroup.
The power map $[mn]\colon D\to T$ is surjective, and
we obtain an exact sequence
\[
1\longrightarrow A
\xrightarrow{i}D
\xrightarrow{[mn]}T
\longrightarrow1,
\]
which yields an exact sequence
\[
H^2(F,A)\xrightarrow{i_*}H^2(F,D)
\xrightarrow{[mn]_*}H^2(F,T).
\]
Since $[mn]=[n]\circ[m]$ and multiplication by $n$
induces multiplication by $n$ on cohomology, we have
\[
[mn]_*(\xi)
=
n[m]_*(\xi)
=
n\eta
=
0.
\]
By the exactness, there is a class $\xi_A\in H^2(F,A)$ satisfying $i_*(\xi_A)=\xi$.

Condition~(1) applied to $A$ gives an algebraic
$F$-homomorphism $\alpha\colon D'\to A$ such that
$\alpha_*(\xi_{\mathcal E'})=\xi_A$.
Then for $\phi=i\circ\alpha\colon D'\to D$,
we have
\[
\phi_*(\xi_{\mathcal E'})
=
i_*\alpha_*(\xi_{\mathcal E'})
=
i_*(\xi_A)
=
\xi_{\mathcal E}.
\]
Lemma~\ref{lem:existence-comparison} therefore gives
a morphism $f\colon\mathcal E'\to\mathcal E$ inducing
$\phi$ on the bands.
By construction, $\phi$ factors through the finite
$F$-subgroup $A\subseteq D$.
This proves~(3).

Assume~(2).
Let $A$ be a finite $F$-group of multiplicative type,
and let $\eta\in H^2(F,A)$.
Choose a Galois gerbe $\mathcal E_\eta$ banded by $A$
whose class is $\eta$.
Condition~(2) gives a morphism
$f\colon\mathcal E'\longrightarrow\mathcal E_\eta$.
Let $\alpha\colon D'\to A$ be its induced band map.
Then Lemma~\ref{lem:existence-comparison} implies that
\[
\alpha_*(\xi_{\mathcal E'})
=
\xi_{\mathcal E_\eta}
=
\eta.
\]
Since $\eta$ is arbitrary, the map
$\operatorname{Hom}_F(D',A)\to H^2(F,A)$ is surjective.
This proves~(1).
\end{proof}

\begin{remark}\label{rmk:Kaletha-gerbe_surj}
  \begin{enumerate}
    \item Kaletha's local and global rigid gerbes also satisfy the vanishing assumption of Lemma~\ref{lem:comparison-dependence}(3).
  For the local band $u$, one has $H^1(F,u)=0$ by
  \cite[Theorem~3.1]{Kal16}; for $F=\mathbb C$, this follows
  from $u=1$.
  For the global band $P_{\dot V}$, one has
  $H^1(F,P_{\dot V})=0$ by \cite[Proposition~3.4.6]{Kal18a}.
  Thus Lemma~\ref{lem:comparison-dependence}(3) shows that,
  in both cases, the cohomology sets are canonically independent
  of the extension representing the distinguished gerbe class.
  \item Kaletha's local and global rigid gerbes satisfy the
equivalent conditions of Proposition \ref{prop:comparision-exist-finite-type}.
For the local gerbe, condition~(1) is
\cite[Proposition~3.2]{Kal16}, with the complex case
following from $u=1$ and $H^2(\mathbb C,A)=0$.
For the global gerbe, the surjectivity of the map
$\Theta_{P_{\dot V}}$ in \cite[Section~3.3, (3.10)]{Kal18a}
follows from \cite[Corollary~3.2.9]{Kal18a}.
The paragraph immediately following (3.10) identifies
this map with evaluation on the global rigid gerbe class.
Thus condition~(1) holds in the global case as well.
  \end{enumerate}

\end{remark}

\begin{corollary}\label{cor:comparison-exist-finite-type}
Let $F$ be a local field of characteristic zero or a number field,
and let $\mathcal E$ be a Galois gerbe over $F$ banded by
an $F$-group $D$ of multiplicative type and of finite type.
Let $\mathcal E^{\mathrm{rig}}$ denote Kaletha's local rigid gerbe
of \cite[Sections~3.1--3.2]{Kal16} when $F$ is local
(with $\mathcal E^{\mathrm{rig}}=1$ for $F=\mathbb C$),
or his global rigid gerbe $\mathcal E^{\mathrm{rig}}_{\dot V}$
of \cite[Sections~3.5--3.6]{Kal18a} for a chosen admissible
set of lifts $\dot V$ when $F$ is a number field.
Then there exists a morphism of gerbes
\[
f\colon\mathcal E^{\mathrm{rig}}\longrightarrow\mathcal E
\]
over $\Gamma$ whose induced algebraic $F$-homomorphism
on the bands factors through a finite $F$-subgroup of $D$.
\end{corollary}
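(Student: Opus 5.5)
This follows directly from Proposition~\ref{prop:comparision-exist-finite-type} applied with $\mathcal E'=\mathcal E^{\mathrm{rig}}$. The only hypothesis to check is condition~(1) of that proposition for the rigid gerbe, and Remark~\ref{rmk:Kaletha-gerbe_surj}(2) records that this holds in every case.

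First, I would check that $\mathcal E^{\mathrm{rig}}$ is a Galois gerbe in the sense of Section~\ref{sec:gerbes}. Its band ($u$ locally, $P_{\dot V}$ globally) must be a pro-algebraic $F$-group of multiplicative type, realized as an inverse limit of finite multiplicative $F$-groups with surjective transition maps. Locally, $u=\varprojlim u_{E/F,n}$ over finite Galois $E/F$ and integers $n$. Globally, $P_{\dot V}=\varprojlim P_{E_i,\dot S_i,N_i}$ over the chosen tower. In both cases the transition maps are surjective by Kaletha's construction. The extension is by continuous cochains with a continuous section, so $\xi_{\mathcal E^{\mathrm{rig}}}\in H^2(F,u)$ (respectively $H^2(F,P_{\dot V})$) is defined. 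For $F=\mathbb C$, the gerbe is trivial, and since $H^2(\mathbb C,A)=0$ condition~(1) holds trivially.

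Second, I would verify condition~(1): for every finite multiplicative $F$-group $A$, the map $\operatorname{Hom}_F(D',A)\to H^2(F,A)$, $\alpha\mapsto\alpha_*\xi_{\mathcal E^{\mathrm{rig}}}$, is surjective. Locally, this is \cite[Proposition~3.2]{Kal16}. Globally, it is the surjectivity of $\Theta_{P_{\dot V}}$ from \cite[Corollary~3.2.9]{Kal18a}, together with the identification of $\Theta_{P_{\dot V}}$ with evaluation on the gerbe class. Both are summarized in Remark~\ref{rmk:Kaletha-gerbe_surj}(2).

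Third, I would apply the implication $(1)\Rightarrow(3)$ of Proposition~\ref{prop:comparision-exist-finite-type} to the given $\mathcal E$, whose band $D$ is algebraic (of finite type) and of multiplicative type. This produces a morphism $f\colon\mathcal E^{\mathrm{rig}}\to\mathcal E$ over $\Gamma$ whose band map $\phi=i\circ\alpha$ factors through a finite subgroup $A=D[mn]\subseteq D$.

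The only potentially delicate point is the identification in the global case: the map Kaletha calls $\Theta$ must agree with $\alpha\mapsto\alpha_*\xi$ under the paper's convention for gerbe classes, up to a sign at most. A sign discrepancy is harmless, since $\alpha\mapsto-\alpha$ is a bijection of $\operatorname{Hom}_F(P_{\dot V},A)$, so surjectivity is preserved either way.
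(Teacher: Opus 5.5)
Your proposal is correct and follows the paper's proof: it applies $(1)\Rightarrow(3)$ of Proposition~\ref{prop:comparision-exist-finite-type} with $\mathcal E'=\mathcal E^{\mathrm{rig}}$, verifying condition~(1) via Remark~\ref{rmk:Kaletha-gerbe_surj}(2). Your added checks, that the rigid gerbe is a Galois gerbe in the paper's sense and that a possible sign in the identification of $\Theta_{P_{\dot V}}$ does not affect surjectivity, are sound but not needed beyond what the paper records.
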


\begin{proof}
By Remark~\ref{rmk:Kaletha-gerbe_surj}(2),
$\mathcal E^{\mathrm{rig}}$ satisfies condition~(1) of
Proposition~\ref{prop:comparision-exist-finite-type}.
Condition~(3) of that proposition therefore gives the
required morphism and the finite-subgroup factorization
of its band map.
\end{proof}

Kaletha's local and global rigid gerbes are reviewed in Sections~\ref{sec:local-formula} and~\ref{sec:number-field}, respectively.

\begin{fact}[{\cite[Fact~3.1.3]{KT23}}]
\label{fact:cartesian}
  Let $f:\mathcal E'\to\mathcal E$ be a morphism over
$\Gamma$, inducing an algebraic $F$-homomorphism $\phi:D'\to D$ on the bands.
For an algebraic $F$-group $G$ and a central algebraic
$F$-subgroup $Z$, the map, natural in $(Z\to G)$,
\begin{equation}\label{eq:cartesian}
\begin{split}
H^1(\mathcal E,Z\to G)
&\longrightarrow
H^1(\mathcal E',Z\to G)
\times_{\operatorname{Hom}_F(D',Z)}\operatorname{Hom}_F(D,Z),
\\
[x]&\longmapsto([x\circ f],x|_D) 
\end{split}
\end{equation}
is a bijection.
\end{fact}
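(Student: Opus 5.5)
The plan is to prove that the map \eqref{eq:cartesian} is well defined, injective and surjective, working throughout with explicit cocycles. To see that it is well defined, let $(x,\nu)\in Z^1(\mathcal E,Z\to G)$. Then $(x\circ f,\nu\circ\phi)$ is an algebraic cocycle of $\mathcal E'$, and its band map factors through $Z$. Since $\nu$ takes central values, the identity $x_{wdw^{-1}}=x_w\nu(d)x_w^{-1}$ combined with the cocycle identity gives ${}^\sigma\nu(d)=\nu({}^\sigma d)$. Hence $\nu$ is defined over $F$, so $\nu\in\operatorname{Hom}_F(D,Z)$. The action of $g\in G(\bar F)$ leaves $\nu$ unchanged, because $\operatorname{Inn}(g)$ is trivial on $Z$, and it commutes with pullback along $f$. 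The two components agree in $\operatorname{Hom}_F(D',Z)$ because $f|_{D'(\bar F)}=\phi$.

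For surjectivity, take a pair consisting of $[y,\nu']\in H^1(\mathcal E',Z\to G)$ and $\nu\in\operatorname{Hom}_F(D,Z)$ with $\nu'=\nu\circ\phi$. I will build a cocycle $x$ on $\mathcal E$ extending $y$ along $f$. For $w\in\mathcal E$ with $\sigma=\pi(w)$, choose $w'\in\mathcal E'$ over $\sigma$; then $w=d\,f(w')$ for a unique $d\in D(\bar F)$, and I set
\[
x_w=\nu(d)\,y_{w'}.
\]
This does not depend on the choice of $w'$. Indeed, replacing $w'$ by $d'w'$ with $d'\in D'(\bar F)$ replaces $d$ by $d\,\phi(d')^{-1}$, so $x_w$ becomes $\nu(d)\nu(\phi(d'))^{-1}\nu'(d')y_{w'}=\nu(d)y_{w'}$. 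To check the cocycle identity, write $w_i=d_if(w'_i)$. Then
\[
w_1w_2=d_1\,{}^{\sigma_1}d_2\,f(w'_1w'_2),
\]
and using that $\nu$ is central and $\Gamma$-equivariant,
\[
x_{w_1w_2}=\nu(d_1)\,{}^{\sigma_1}\nu(d_2)\,y_{w'_1}\,{}^{\sigma_1}y_{w'_2}=x_{w_1}\,{}^{w_1}x_{w_2}.
\]
For continuity, evaluate along a continuous section of $\pi'$ composed with $f$: the map $w\mapsto d$ is continuous, and $\nu$ on $\bar F$-points is continuous because it factors through a discrete finite-type quotient. By construction $x|_D=\nu$ and $x\circ f=y$.

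For injectivity, suppose $[x_1,\nu]$ and $[x_2,\nu]$ have the same $\nu$ and that $x_2\circ f=g\,(x_1\circ f)\,{}^{\cdot}g^{-1}$ for some $g\in G(\bar F)$. Replace $x_1$ by its $g$-translate; this keeps $\nu$ because $\operatorname{Inn}(g)$ fixes $Z$. Now $x_1\circ f=x_2\circ f$, and both cocycles restrict to $\nu$ on $D$. Since $\mathcal E=D(\bar F)\cdot f(\mathcal E')$ and each $x_i$ is determined by these data through the formula above, $x_1=x_2$.

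The step I expect to need the most care is the continuity and well-definedness in the surjectivity construction. The centrality of $Z$ is used essentially there: it lets the factors $\nu(d)$ commute past the values $y_{w'}$, and it makes the $G(\bar F)$-action trivial on the band coordinate. Everything else is routine bookkeeping. Naturality in $(Z\to G)$ is clear from the formulas.
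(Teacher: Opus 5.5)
Your proof is correct and complete. The paper itself gives no proof of this statement. It quotes it from \cite[Fact~3.1.3]{KT23} and uses it as a black box, for instance in the proof of Proposition~\ref{prop:comparison-formal-ab}.

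The argument implicit in that proof would be fiberwise. One identifies the Newton fibers over $\nu$ and over $\nu\circ\phi$ with the same $H^1(F,G_x)$ via the twisting bijections \eqref{eq:twisting-restriction-fiber}. Pullback then corresponds to the identity, since $((a\circ\pi)x)\circ f=(a\circ\pi')(x\circ f)$. This gives injectivity and bijectivity on fibers, but only over those $\nu$ whose fiber is already known to be nonempty. Showing that $\nu$ lifts to $H^1(\mathcal E,Z\to G)$ whenever $\nu\circ\phi$ lifts to $H^1(\mathcal E',Z\to G)$ needs a separate existence argument, e.g.\ comparing the $2$-cocycles of $\mathcal E$ and $\mathcal E'$ as in the proof of Lemma~\ref{lem:existence-comparison}.

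Your formula $x_w=\nu(d)\,y_{w'}$, built on the decomposition $\mathcal E=D(\bar F)\cdot f(\mathcal E')$, settles existence and uniqueness at once and needs no twisting. It uses only the centrality of $Z$, plus the finite type of $Z$ for continuity of $\nu$. So it works for an arbitrary algebraic $F$-group $G$, as in the statement. The twisting description, in turn, is what the paper needs afterwards to transport the abelianization relation.

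One small slip: the conjugation identity in your first paragraph should read $x_{wdw^{-1}}=x_w\,{}^{\sigma}\nu(d)\,x_w^{-1}$. As you wrote it, centrality would give $\nu({}^\sigma d)=\nu(d)$ rather than $\Gamma$-equivariance.
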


\subsection{Kaletha's abelianization}\label{subsec:abelianization}

We recall the restriction and twisting constructions of
\cite[Sections~2.6--2.7]{Kot14}.
If $(x,\nu) \in Z^1(\mathcal{E},Z \to G)$, then $\nu \colon D_{\bar F} \to Z_{\bar F}$ is defined over $F$, since the cocycle identity \eqref{e:cocycle-cond} applied to $wdw^{-1}$ with $\pi(w) = \sigma$ gives
\[
\nu({}^\sigma\! d) = x_{\hs{}^\sigma\! d} = x_{wdw^{-1}}=x_w\,{}^{\sigma}\!x_d\,x_w^{-1}
             ={}^{\sigma}\!x_d
             = {}^\sigma\! \nu(d)
\]
for every $d \in D(\bar F)$.
Hence there is a well-defined map
\[
N \colon H^1(\mathcal E,Z\to G)
\longrightarrow\operatorname{Hom}_F(D,Z),
\qquad
[x,\nu]
\longmapsto \nu,
\]
which we call the \textit{Newton map}.

For $x\in Z^1(\mathcal E,Z\to G)$, let $\nu=x|_D$.
Since $\nu$ is central, the image of $x$ in $G_{\mathrm{ad}}$
descends to a cocycle in $Z^1(F,G_{\mathrm{ad}})$.
Let $G_x$ be the corresponding inner form of $G$.
Its (twisted) Galois action is given by
\[
{}^{\sigma,x}g=x_w\,{}^\sigma g\,x_w^{-1},
\qquad w\in\mathcal E,\quad \pi(w)=\sigma,
\]
which is independent of the chosen lift $w$ of $\sigma$.

Twisting gives a bijection
\begin{equation}\label{eq:twisting-restriction-fiber}
t_x\colon H^1(F,G_x)\xrightarrow{\sim}N^{-1}(\nu),
\qquad
[a]\longmapsto[(a\circ\pi) \cdot x,\nu]
\end{equation}
where $(a\circ\pi)\cdot x\colon\mathcal E\to G(\bar F)$
sends $w$ to $a_{\pi(w)}x_w$.
To describe the inverse, choose a representative $(y,\nu)$
of a class $[y,\nu]\in N^{-1}(\nu)$ and let
\[
a_\sigma=y_w x_w^{-1},
\qquad \pi(w)=\sigma.
\]
This expression is independent of the lift $w$ and satisfies
the cocycle identity for the twisted Galois action on $G_x$.
Moreover, choosing a continuous section of $\pi$ shows that
$\sigma\mapsto a_\sigma$ is continuous.
Changing the representative $(y,\nu)$ changes $a$ by a
coboundary for the twisted action, hence the inverse sends
$[y,\nu]$ to the well-defined class $[a]\in H^1(F,G_x)$.
The trivial class of $H^1(F,G_x)$ corresponds to $[x]$ and the fiber of the trivial homomorphism along $N$ is $H^1(F,G)$.
By \cite[Fact 3.1.1]{KT23}, there is the inflation-restriction exact sequence
\begin{equation}\label{eq:inflation}
0\longrightarrow H^1(F,G)\longrightarrow
H^1(\mathcal E,Z\to G)
\longrightarrow\operatorname{Hom}_F(D,Z)
\longrightarrow H^2(F,G),
\end{equation}
where the last term is to be ignored if $G$ is not commutative.

We define an equivalence relation on $H^1(\mathcal E,Z\to G)$, following \cite[Section 3.4]{Kal16} and \cite[Section 3.8]{Kal18a}.
Let $\rho_x\colon G_x^{\operatorname{sc}}\to G_x$ be the natural
map from the simply connected cover of the derived group and let $\rho_{x,*} \colon H^1(F,G_x^{\operatorname{sc}}) \to H^1(F,G_x)$ be the induced map.
The classes $[x],[y]\in H^1(\mathcal E,Z\to G)$ are
equivalent if
\begin{equation}\label{def:gerbe-abelianization}
[y]=t_x\bigl(\rho_{x,*}[a]\bigr) = [(\rho_x \circ a \circ \pi) \cdot x, \nu]
\quad\text{for some }[a]\in H^1(F,G_x^{\mathrm{sc}}).    
\end{equation}

This relation is independent of representatives, since changing
$x$ by a coboundary induces compatible isomorphisms of $G_x$
and its simply connected derived cover.
The relation preserves restriction to the band.
It is also functorial in $(Z\to G)$, as a homomorphism of pairs
induces a homomorphism of the corresponding inner forms and
of their simply connected derived covers, compatible with
\eqref{def:gerbe-abelianization}.

We denote the quotient by $H^1_{\mathrm{ab}}(\mathcal E,Z\to G)$
and the quotient map by
\[
\operatorname{ab}\colon H^1(\mathcal E,Z\to G)
\longrightarrow H^1_{\mathrm{ab}}(\mathcal E,Z\to G).
\]
The fiber of $\operatorname{ab}$ is given by
\begin{equation}\label{eq:abelianization-fiber}
\operatorname{ab}^{-1}(\operatorname{ab}[x])
=t_x\!\left(\operatorname{im}\left[
H^1(F,G_x^{\mathrm{sc}})\longrightarrow H^1(F,G_x)
\right]\right).
\end{equation}
In particular, the fiber over the distinguished point $[1,1]$ along $\operatorname{ab}$ is the image of $H^1(F,G^{\mathrm{sc}})$ in
$H^1(\mathcal E,Z\to G)$.

\begin{corollary}\label{cor:abelianization-bijective}
If $H^1(F,G_x^{\mathrm{sc}})=1$ for every
$x\in Z^1(\mathcal E,Z\to G)$, then $\operatorname{ab}$ is bijective.
This holds when $F$ is a non-Archimedean local field or a totally imaginary
number field. It also holds over any $F$ when $G$ is a torus.
\end{corollary}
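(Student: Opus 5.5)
The first assertion will follow directly from the fiber description \eqref{eq:abelianization-fiber}. The map $\operatorname{ab}$ is surjective by construction, since it is a quotient map. So it suffices to show that every fiber is a singleton. For $[x]\in H^1(\mathcal E,Z\to G)$, the fiber through $[x]$ is $t_x$ applied to the image of $H^1(F,G_x^{\mathrm{sc}})$ in $H^1(F,G_x)$. If $H^1(F,G_x^{\mathrm{sc}})=1$, this image is the trivial class. By \eqref{eq:twisting-restriction-fiber}, $t_x$ sends the trivial class to $[x]$. Hence $\operatorname{ab}^{-1}(\operatorname{ab}[x])=\{[x]\}$, and $\operatorname{ab}$ is injective. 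Equivalently, the equivalence relation \eqref{def:gerbe-abelianization} identifies $[y]$ with $[x]$ only when $[y]=t_x(\rho_{x,*}[a])$ for some $[a]$, and the only possible $[a]$ is trivial.

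Next I check the vanishing hypothesis in the three listed cases. First I note what each $G_x^{\mathrm{sc}}$ is. The cocycle $x$ has central restriction $\nu$, so its image in $G_{\mathrm{ad}}$ descends to a Galois cocycle, as explained before \eqref{eq:twisting-restriction-fiber}. Thus $G_x$ is an inner form of $G$ over $F$, and $G_x^{\mathrm{sc}}$ is a semisimple simply connected $F$-group.

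\begin{itemize}
\item Non-Archimedean local field $F$: I invoke Kneser's theorem, $H^1(F,H)=1$ for any semisimple simply connected $H$ over a $p$-adic field. This applies to $H=G_x^{\mathrm{sc}}$.
\item Totally imaginary number field $F$: I invoke the Hasse principle of Kneser--Harder--Chernousov for simply connected groups. It gives that $H^1(F,H)\to\prod_{v\text{ real}}H^1(F_v,H)$ is injective, indeed bijective. With no real places the target is a point, so $H^1(F,H)=1$.
\item $G$ a torus: every inner form $G_x$ equals $G$, the derived group is trivial, and so $G_x^{\mathrm{sc}}=1$. Its cohomology is trivial over any $F$.
\end{itemize}

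The only delicate points are mild. One is citing Kneser's and the Hasse principle correctly. The other is observing that the hypothesis must hold for every twist $G_x$, not merely for $G$. This is automatic, because both theorems apply to arbitrary simply connected groups.
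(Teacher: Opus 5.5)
Your proposal is correct and follows the paper's proof. Injectivity comes from the fiber description of $\operatorname{ab}$, and the vanishing of $H^1(F,G_x^{\mathrm{sc}})$ comes from Kneser's theorem, the Kneser--Harder--Chernousov Hasse principle, and the triviality of $G_x^{\mathrm{sc}}$ for a torus; the only difference is that you phrase the Hasse principle as a bijection onto the product over real places, while the paper combines injectivity into all completions with local vanishing, which amounts to the same thing.
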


\begin{proof}
The first assertion follows from \eqref{eq:abelianization-fiber}.
Each $G_x^{\mathrm{sc}}$ is semisimple and simply connected, hence its first cohomology is trivial over a non-Archimedean local field by the
Kneser--Bruhat--Tits theorem.
Over a totally imaginary number
field, the Kneser--Harder--Chernousov Hasse principle and the
vanishing at every completion give the same conclusion.
For a torus the simply connected covering of the derived group is trivial.
\end{proof}

We prove the analogue of Fact~\ref{fact:cartesian} for abelianized cohomology.

\begin{proposition}\label{prop:comparison-formal-ab}
Let $f\colon\mathcal E'\to\mathcal E$ be a morphism over
$\Gamma$ inducing an algebraic $F$-homomorphism
$\phi\colon D'\to D$.
There is a bijection of pointed sets, natural in $(Z\to G)$,
\begin{equation}\label{eq:comparison-formal-ab}
H^1_{\mathrm{ab}}(\mathcal E,Z\to G)
\xrightarrow{\sim}
H^1_{\mathrm{ab}}(\mathcal E',Z\to G)
\underset{\operatorname{Hom}_F(D',Z)}{\times}
\operatorname{Hom}_F(D,Z),
\end{equation}
given by
\[
\operatorname{ab}[x,\nu]
\longmapsto
\bigl(\operatorname{ab}[x\circ f,\nu\circ\phi],\nu\bigr).
\]
The maps to the base are restriction to $D'$ and
$\nu\mapsto\nu\circ\phi$, respectively.

Moreover, there is a natural bijection
\begin{equation}\label{eq:comparison-full-recovery}
H^1(\mathcal E,Z\to G)
\xrightarrow{\sim}
H^1_{\mathrm{ab}}(\mathcal E,Z\to G)
\underset{H^1_{\mathrm{ab}}(\mathcal E',Z\to G)}{\times}
H^1(\mathcal E',Z\to G),
\end{equation}
given by $[x,\nu]\mapsto(\operatorname{ab}[x,\nu],f^*[x,\nu])$.
The maps to the base are pullback by $f$ and abelianization,
respectively.
\end{proposition}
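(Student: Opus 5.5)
The plan is to reduce both assertions to Fact~\ref{fact:cartesian}, which already identifies $H^1(\mathcal E,Z\to G)$ with the fiber product $H^1(\mathcal E',Z\to G)\times_{\operatorname{Hom}_F(D',Z)}\operatorname{Hom}_F(D,Z)$. Then I would track how the equivalence relation \eqref{def:gerbe-abelianization} behaves under $f^*$. The key observation is that for $x\in Z^1(\mathcal E,Z\to G)$, the pulled-back cocycle $x'=x\circ f$ has the same image in $Z^1(F,G_{\mathrm{ad}})$ as $x$. This holds because $f$ lies over $\Gamma$: for $w'$ over $\sigma$ the twisted action ${}^{\sigma,x'}g=x_{f(w')}\,{}^\sigma g\,x_{f(w')}^{-1}$ is the twisted action of $x$. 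Hence $G_{x'}=G_x$ and $G_{x'}^{\mathrm{sc}}=G_x^{\mathrm{sc}}$ as $F$-groups. Moreover, since $\pi\circ f=\pi'$, the twisting maps satisfy
\[
f^*\circ t_x=t_{x\circ f}\colon H^1(F,G_x)\to H^1(\mathcal E',Z\to G),
\]
as one checks on cocycles, since $((a\circ\pi)\cdot x)\circ f=(a\circ\pi')\cdot(x\circ f)$.

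From this compatibility, $f^*$ respects the equivalence relation, so the map in \eqref{eq:comparison-formal-ab} is well defined; the Newton coordinate $\nu$ is constant on equivalence classes. Surjectivity follows from surjectivity in Fact~\ref{fact:cartesian}. For injectivity, suppose $[x,\nu]$ and $[y,\nu]$ have equivalent pullbacks, so
\[
f^*[y]=t_{x\circ f}(\rho_*[a])=f^*\bigl(t_x(\rho_*[a])\bigr)
\]
for some $[a]\in H^1(F,G_x^{\mathrm{sc}})$. The class $t_x(\rho_*[a])$ has Newton map $\nu$, as does $[y]$. So both map to the same point of the fiber product in Fact~\ref{fact:cartesian}, and injectivity there gives $[y]=t_x(\rho_*[a])$, that is, $\operatorname{ab}[y]=\operatorname{ab}[x]$. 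Naturality in $(Z\to G)$ is inherited from the naturality of Fact~\ref{fact:cartesian} and the functoriality of the relation noted after \eqref{def:gerbe-abelianization}.

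For \eqref{eq:comparison-full-recovery}, the map is well defined by the commutativity $\operatorname{ab}'\circ f^*=f^*_{\mathrm{ab}}\circ\operatorname{ab}$ established above.

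For \emph{injectivity}, if $\operatorname{ab}[x]=\operatorname{ab}[y]$ and $f^*[x]=f^*[y]$, then the Newton maps agree, and Fact~\ref{fact:cartesian} gives $[x]=[y]$.

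For \emph{surjectivity}, take a pair $(\operatorname{ab}[x],[y'])$ with $\operatorname{ab}[y']=\operatorname{ab}[x\circ f]$. Then $[y']=t_{x\circ f}(\rho_*[a])$ for some $[a]\in H^1(F,G_x^{\mathrm{sc}})$, using $G_{x\circ f}=G_x$. Set $[y]=t_x(\rho_*[a])$. Then $f^*[y]=[y']$ and $\operatorname{ab}[y]=\operatorname{ab}[x]$.

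I expect the main obstacle to be the careful verification that $G_{x\circ f}$ and $G_x$ (with their sc covers and the maps $\rho$) are literally identified, and that this identification is compatible with changes of representatives within cohomology classes, so that $t_{x\circ f}=f^*\circ t_x$ holds on classes and not merely on chosen cocycles. I would handle this by fixing representatives throughout and noting that a coboundary by $g\in G(\bar F)$ acts identically on $x$ and on $x\circ f$.
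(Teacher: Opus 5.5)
Your proposal is correct and follows the paper's proof. Both reduce to Fact~\ref{fact:cartesian} via the observation that $x$ and $x\circ f$ define the same inner form $G_x$ and that $((a\circ\pi)\cdot x)\circ f=(a\circ\pi')\cdot(x\circ f)$, so pullback matches the abelianization relations. The only cosmetic difference is that you verify \eqref{eq:comparison-full-recovery} elementwise, whereas the paper obtains it from \eqref{eq:cartesian} and \eqref{eq:comparison-formal-ab} by associativity of fiber products.
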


\begin{proof}
By Fact \ref{fact:cartesian}, pullback identifies the restriction
fiber over each $\nu\in\operatorname{Hom}_F(D,Z)$
with the restriction fiber over $\nu\circ\phi$.
We show that this bijection \eqref{eq:cartesian} identifies the abelianization
equivalence relations \eqref{def:gerbe-abelianization}.

Choose a representative $(x,\nu)$ in a nonempty fiber.
Since $f$ is a morphism over $\Gamma$, the cocycles $x$
and $x\circ f$ induce the same adjoint Galois cocycle
and hence the same inner form $G_x$.
For $a\in Z^1(F,G_x)$, we have
\[
\bigl((a\circ\pi)x\bigr)\circ f
=(a\circ\pi')(x\circ f),
\]
where $\pi$ and $\pi'$ are the gerbe projections.
Thus, under the twisting bijections
\eqref{eq:twisting-restriction-fiber}, pullback corresponds
to the identity on $H^1(F,G_x)$.
It therefore identifies the subsets arising from
$H^1(F,G_x^{\mathrm{sc}})$.

Applying this argument at every class shows that pullback
identifies the abelianization equivalence relations \eqref{def:gerbe-abelianization}.
These relations preserve restriction to the band, hence
taking quotients in \eqref{eq:cartesian} gives
\eqref{eq:comparison-formal-ab}.
The formula follows from Fact~\ref{fact:cartesian}, and naturality
follows from the functoriality of the abelianization relation.
Combining \eqref{eq:cartesian} and \eqref{eq:comparison-formal-ab}
by associativity of fiber products gives
\eqref{eq:comparison-full-recovery}.
\end{proof}

\subsection{Central quotients and direct limits}

For a pro-algebraic $F$-group $H$ of multiplicative type, let
\[
X^*(H)=\operatorname{Hom}_{\bar F}(H_{\bar F},\mathbb G_{m,\bar F}),
\qquad
X_*(H)=\operatorname{Hom}_{\bar F}(\mathbb G_{m,\bar F},H_{\bar F}).
\]
Both carry natural $\Gamma$-actions.
Let $G^\ssc$ denote the simply connected cover of the derived group $G^{\mathrm{der}} \subseteq G$. 
For a maximal $F$-torus $T\subseteq G$, let $T^{\mathrm{sc}}$ be its 
inverse image in $G^{\ssc}$, which is again a maximal torus.
We let
\[
\pi_1(G) \coloneqq X_*(T)/X_*(T^{\mathrm{sc}}),
\]
where the denominator means its image in $X_*(T)$.
This is  Borovoi's algebraic fundamental group, viewed as a 
$\Gamma$-module, finitely generated over $\Z$.
It is independent of the choice of a maximal torus $T$ up to a canonical isomorphism;
see \cite[Lemma 1.2]{Bor98} and \cite[Proposition 2.2.1]{Fu25}. 

\begin{lemma}\label{lem:finite-quotient}
For a finite central $F$-subgroup $A\subseteq G$, there is a natural
exact sequence of $\Gamma$-modules
\begin{equation}\label{eq:finite-quotient}
0\longrightarrow\pi_1(G)\longrightarrow\pi_1(G/A)
\longrightarrow
\operatorname{Hom}_{\mathbb Z}(X^*(A),\mathbb Q/\mathbb Z)
\longrightarrow0.
\end{equation}
These sequences are compatible with inclusions of finite central
subgroups.
\end{lemma}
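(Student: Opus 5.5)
Fix a maximal $F$-torus $T\subseteq G$ and set $\bar T=T/A$, a maximal $F$-torus of $\bar G=G/A$. Since $A$ is central and finite, it lies in $T$, and the isogeny $G\to G/A$ restricts to an isomorphism on derived groups modulo a central subgroup. In particular, $\bar G^{\mathrm{sc}}=G^{\mathrm{sc}}$, and the inverse image of $\bar T$ in $\bar G^{\mathrm{sc}}$ is $T^{\mathrm{sc}}$. Hence
\[
\pi_1(G)=X_*(T)/X_*(T^{\mathrm{sc}}),\qquad
\pi_1(G/A)=X_*(\bar T)/X_*(T^{\mathrm{sc}}),
\]
where the map $T^{\mathrm{sc}}\to\bar T$ is the composite $T^{\mathrm{sc}}\to T\to\bar T$. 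The map $X_*(T)\to X_*(\bar T)$ induced by the isogeny is injective, so $\pi_1(G)\to\pi_1(G/A)$ is injective. Its cokernel equals $X_*(\bar T)/X_*(T)$, because the two quotients are taken by the same image of $X_*(T^{\mathrm{sc}})$. All maps are induced by $F$-morphisms, so everything is $\Gamma$-equivariant.

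It remains to identify $X_*(\bar T)/X_*(T)$ with $\operatorname{Hom}_{\mathbb Z}(X^*(A),\mathbb Q/\mathbb Z)$ naturally. Dualize the exact sequence $1\to A\to T\to\bar T\to1$ to obtain
\[
0\to X^*(\bar T)\to X^*(T)\to X^*(A)\to 0,
\]
which is exact because restriction of characters to a subgroup of multiplicative type is surjective. Apply $\operatorname{Hom}_{\mathbb Z}(-,\mathbb Z)$. Since $X^*(A)$ is finite, $\operatorname{Hom}(X^*(A),\mathbb Z)=0$, and $\operatorname{Ext}^1(X^*(T),\mathbb Z)=0$ because $X^*(T)$ is free. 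This gives
\[
0\to X_*(T)\to X_*(\bar T)\to \operatorname{Ext}^1_{\mathbb Z}(X^*(A),\mathbb Z)\to 0.
\]
Finally, $\operatorname{Ext}^1(M,\mathbb Z)\simeq\operatorname{Hom}(M,\mathbb Q/\mathbb Z)$ for finite $M$, via the connecting map of $0\to\mathbb Z\to\mathbb Q\to\mathbb Q/\mathbb Z\to0$. Concretely, $\lambda\in X_*(\bar T)$ is sent to the homomorphism $\chi|_A\mapsto \langle\chi,\lambda\rangle/1\bmod\mathbb Z$, where $\langle\chi,\lambda\rangle\in\mathbb Z$ is computed with $\chi\in X^*(T)$ and $X_*(\bar T)\subseteq X_*(T)\otimes\mathbb Q$. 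This explicit formula makes $\Gamma$-equivariance clear.

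The main point requiring care is naturality: independence of the choice of $T$, and compatibility with inclusions $A\subseteq A'$. For independence of $T$, I would use the canonical identifications of \cite[Lemma 1.2]{Bor98}, which come from conjugating maximal tori over $\bar F$. Conjugation by $g\in G(\bar F)$ induces compatible isomorphisms on $T$, $\bar T$, and $T^{\mathrm{sc}}$, and acts trivially on $A$ because $A$ is central. Therefore the pairing formula, and hence the map to $\operatorname{Hom}(X^*(A),\mathbb Q/\mathbb Z)$, is unchanged. For $A\subseteq A'$, use the same $T$ for both and the chain $X_*(T)\subseteq X_*(T/A)\subseteq X_*(T/A')$. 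The restriction $X^*(A')\to X^*(A)$ is surjective, which induces an injection $\operatorname{Hom}(X^*(A),\mathbb Q/\mathbb Z)\hookrightarrow\operatorname{Hom}(X^*(A'),\mathbb Q/\mathbb Z)$. The same pairing formula shows the resulting diagram commutes, so the sequences are compatible with inclusions.
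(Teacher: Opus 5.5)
Your proof is correct and follows the paper's own argument: fix a maximal $F$-torus $T$, use the finite-index inclusion $X_*(T)\hookrightarrow X_*(T/A)$ together with the common coroot lattice $X_*(T^{\mathrm{sc}})$, and identify $X_*(T/A)/X_*(T)$ with $\operatorname{Hom}_{\mathbb Z}(X^*(A),\mathbb Q/\mathbb Z)$ through the rational pairing. Your $\operatorname{Ext}$ computation justifies the perfectness that the paper only asserts, and your naturality discussion fills in details the paper leaves implicit; one small slip is that in your explicit formula $\langle\chi,\lambda\rangle$ lies in $\mathbb Q$, not $\mathbb Z$, since $\lambda\in X_*(T/A)\subseteq X_*(T)\otimes\mathbb Q$.
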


\begin{proof}
Every maximal torus of $G$ contains $A$. Fix a maximal $F$-torus $T$.
The isogeny $T\to T/A$ induces an inclusion 
$X_*(T)\xhookrightarrow{} X_*(T/A)$ as a subgroup of finite index.
The simply connected covers for $G$ and $G/A$ are the same, hence both of their coroot lattices are identified with $X_*(T^\ssc)$ under this inclusion.
Then the quotient by $X_*(T^\ssc)$ gives an exact sequence
\begin{align*}
0\longrightarrow\pi_1(G)\longrightarrow\pi_1(G/A)
\longrightarrow
X_*(T/A)/X_*(T)
\longrightarrow 0.
\end{align*}
It remains to identify $X_*(T/A)/X_*(T)$ with $\operatorname{Hom}_{\mathbb Z}(X^*(A),\mathbb Q/\mathbb Z)$.
Consider the exact sequence
\[
0 \longrightarrow X^*(T/A) \longrightarrow X^*(T) \longrightarrow X^*(A) \to 0.
\]
The $\mathbb{Z}$-pairing between $X_*(T/A)$ and $X^*(T/A)$ can be extended 
to a $\mathbb{Q}$-pairing between $X_*(T/A)$ and $X^*(T)$, 
which provides a $\Gamma$-equivariant perfect pairing
\[
X_*(T/A)/X_*(T) \otimes X^*(T)/X^*(T/A) \to \mathbb{Q}/\mathbb{Z}.
\]
Since $X^*(T)/X^*(T/A)\cong X^*(A)$, we obtain a $\Gamma$-equivariant isomorphism $X_*(T/A)/X_*(T) \cong \operatorname{Hom}_{\mathbb Z}(X^*(A),\mathbb Q/\mathbb Z)$, as desired.
\end{proof}

For $A\subseteq A'\subseteq Z$, we have the natural homomorphism 
$\pi_1(G/A)\to\pi_1(G/A')$ induced by the quotient homomorphism.
Lemma \ref{lem:finite-quotient}, applied also to $G/A$, shows that
these transition maps are injective.

\begin{proposition}\label{prop:colimit-quotient}
For a central subgroup $Z$ of $G$, there is a natural exact sequence of $\Gamma$-modules
\begin{equation}\label{eq:colimit-quotient}
0\longrightarrow\pi_1(G)\longrightarrow
\varinjlim_{A\subseteq Z, \, A\text{ finite}/F}\pi_1(G/A)
\longrightarrow
\operatorname{Hom}_{\mathbb Z}(X^*(Z),\mathbb Q/\mathbb Z)
\longrightarrow0.
\end{equation}
\end{proposition}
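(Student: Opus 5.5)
The plan is to pass to the direct limit in the sequences \eqref{eq:finite-quotient} of Lemma~\ref{lem:finite-quotient}, and then to identify the limit of the right-hand terms with $\operatorname{Hom}_{\mathbb Z}(X^*(Z),\mathbb Q/\mathbb Z)$.

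First, the index set is directed: the finite $F$-subgroups $A\subseteq Z$ are closed under products, since $A A'$ is again finite and defined over $F$. Lemma~\ref{lem:finite-quotient} gives, for each $A$, a short exact sequence of $\Gamma$-modules. These sequences are compatible with inclusions $A\subseteq A'$; on the first term the transition map is the identity of $\pi_1(G)$. Filtered colimits of abelian groups are exact, so we obtain an exact sequence
\[
0\longrightarrow\pi_1(G)\longrightarrow\varinjlim_A\pi_1(G/A)\longrightarrow\varinjlim_A\operatorname{Hom}_{\mathbb Z}(X^*(A),\mathbb Q/\mathbb Z)\longrightarrow0,
\]
and the $\Gamma$-actions pass to the limit. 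Compatibility of the right-hand maps should be checked once at the level of a fixed maximal torus $T$. The inclusion $X_*(T/A)\subseteq X_*(T/A')$ induces a map $X_*(T/A)/X_*(T)\to X_*(T/A')/X_*(T)$. Under the pairing in the proof of Lemma~\ref{lem:finite-quotient}, this map corresponds to the dual of the restriction $X^*(A')\to X^*(A)$, which is surjective because $A\subseteq A'$ is a closed immersion of groups of multiplicative type.

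The key step is to show that the natural map
\[
\varinjlim_A\operatorname{Hom}_{\mathbb Z}(X^*(A),\mathbb Q/\mathbb Z)\longrightarrow\operatorname{Hom}_{\mathbb Z}(X^*(Z),\mathbb Q/\mathbb Z)
\]
is an isomorphism. The map is the colimit of the maps dual to the surjections $X^*(Z)\to X^*(A)$, and I would prove bijectivity by explicit subgroups. Put $M=X^*(Z)$, a finitely generated $\Gamma$-module; the finite $F$-subgroups $Z[n]$ form a cofinal family. Indeed, every finite $A\subseteq Z$ is killed by its order $n$, hence $A\subseteq Z[n]$, and $Z[n]$ is finite and $F$-rational. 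We have $X^*(Z[n])=M/nM$, so
\[
\operatorname{Hom}(M/nM,\mathbb Q/\mathbb Z)=\operatorname{Hom}(M,\tfrac1n\mathbb Z/\mathbb Z).
\]
Each transition map is therefore the inclusion $\operatorname{Hom}(M,\tfrac1n\mathbb Z/\mathbb Z)\subseteq\operatorname{Hom}(M,\tfrac1{nm}\mathbb Z/\mathbb Z)$, and the colimit is the union $\bigcup_n\operatorname{Hom}(M,\tfrac1n\mathbb Z/\mathbb Z)$. Since $M$ is finitely generated, any homomorphism $M\to\mathbb Q/\mathbb Z$ has image in some $\tfrac1n\mathbb Z/\mathbb Z$, so the union is all of $\operatorname{Hom}(M,\mathbb Q/\mathbb Z)$. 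This identification is $\Gamma$-equivariant and natural.

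The main obstacle I expect is bookkeeping rather than depth. One must verify that the identification of $X_*(T/A)/X_*(T)$ in Lemma~\ref{lem:finite-quotient} is compatible both with the transition maps and with the final map to $\operatorname{Hom}(X^*(Z),\mathbb Q/\mathbb Z)$, and that it does not depend on the choice of $T$. For the compatibility I would use the single ambient $\mathbb Q$-pairing $X_*(T)\otimes\mathbb Q\times X^*(T)\to\mathbb Q$, restricting characters along $X^*(T)\to X^*(Z)\to X^*(A)$. All the identifications then become restrictions of one pairing, and compatibility is immediate. Naturality in $(Z\to G)$ follows from the naturality of $\pi_1$ and of this pairing.
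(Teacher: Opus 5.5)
Your proposal is correct and follows essentially the same route as the paper. Both arguments take the filtered colimit of the exact sequences of Lemma~\ref{lem:finite-quotient}. Both then identify $\varinjlim_A\operatorname{Hom}_{\mathbb Z}(X^*(A),\mathbb Q/\mathbb Z)$ with $\operatorname{Hom}_{\mathbb Z}(X^*(Z),\mathbb Q/\mathbb Z)$ using the cofinal family $Z[n]$ and the finite generation of $X^*(Z)$. Your extra checks only make explicit what the paper leaves implicit: directedness of the index set, the transition maps as inclusions $\operatorname{Hom}(M,\tfrac1n\mathbb Z/\mathbb Z)\subseteq\operatorname{Hom}(M,\tfrac1{nm}\mathbb Z/\mathbb Z)$, and compatibility via a single ambient pairing.
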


\begin{proof}
The subgroups $Z[n]=\ker([n]\colon Z\to Z)$ are finite and defined over $F$.
They are cofinal among the finite subgroup schemes of $Z$, and their character groups are given by $X^*(Z[n]) = X^*(Z)/nX^*(Z)$.
Since $X^*(Z)$ is
finitely generated, every homomorphism from $X^*(Z)$ to
$\mathbb Q/\mathbb Z$ has finite image and factors through one of
these quotients.
Consequently, we obtain
\[
\varinjlim_{A\subseteq Z,\, A\text{ finite}/F}
\operatorname{Hom}_{\mathbb Z}(X^*(A),\mathbb Q/\mathbb Z)
=\operatorname{Hom}_{\mathbb Z}(X^*(Z),\mathbb Q/\mathbb Z).
\]
Since filtered direct limits are exact, taking the direct limit of
\eqref{eq:finite-quotient} proves the assertion.
\end{proof}

Under the finite-image hypothesis below, central algebraic
cohomology is a direct limit over finite central subgroups.

\begin{lemma}\label{lem:abelianization-colimit}
Let $\mathcal E$ be a Galois gerbe over $F$, banded by $D$,
and let $Z$ be an algebraic central $F$-subgroup of a connected
reductive $F$-group $G$.
Suppose that every algebraic $F$-homomorphism $D\to Z$
factors through a finite $F$-subgroup of $Z$.
Then the natural maps induce bijections of pointed sets
\begin{align*}
\varinjlim_{\substack{A\subseteq Z\\A\text{ finite}/F}}
H^1(\mathcal E,A\to G)
&\xrightarrow{\sim} H^1(\mathcal E,Z\to G), \qquad
\varinjlim_{\substack{A\subseteq Z\\A\text{ finite}/F}}
H^1_{\mathrm{ab}}(\mathcal E,A\to G)
&\xrightarrow{\sim} H^1_{\mathrm{ab}}(\mathcal E,Z\to G).
\end{align*}
The transition maps in both direct systems are injective.
The assumption is satisfied whenever $X^*(D)$ is torsion.
\end{lemma}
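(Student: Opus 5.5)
The plan is to work fiberwise over the Newton map. For finite $A\subseteq A'\subseteq Z$, the inclusion $Z^1(\mathcal E,A\to G)\subseteq Z^1(\mathcal E,A'\to G)\subseteq Z^1(\mathcal E,Z\to G)$ is $G(\bar F)$-equivariant, so it induces maps of pointed sets $H^1(\mathcal E,A\to G)\to H^1(\mathcal E,A'\to G)\to H^1(\mathcal E,Z\to G)$. These maps are compatible, and so they give the map from the direct limit. The key observation is that on cohomology these maps are simply the inclusions of unions of Newton fibers. Indeed, $H^1(\mathcal E,A\to G)$ is the subset of $H^1(\mathcal E,Z\to G)$ consisting of classes $[x,\nu]$ with $\nu\in\operatorname{Hom}_F(D,A)\subseteq\operatorname{Hom}_F(D,Z)$. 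The $G(\bar F)$-action changes $\nu$ only by $\operatorname{Inn}(g)\circ\nu=\nu$, because $\nu$ is central. Hence two cocycles in $Z^1(\mathcal E,A\to G)$ are equivalent in $H^1(\mathcal E,Z\to G)$ exactly when they are equivalent in $H^1(\mathcal E,A\to G)$: the equivalence is given by the same $g$, and $Z^1(\mathcal E,A\to G)$ is stable under the action. This gives injectivity of the transition maps and of the map from the limit.

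For surjectivity, take $[x,\nu]\in H^1(\mathcal E,Z\to G)$. As observed before the definition of the Newton map, $\nu$ is defined over $F$. By hypothesis $\nu$ factors through a finite $F$-subgroup $A\subseteq Z$, so $(x,\nu)\in Z^1(\mathcal E,A\to G)$. Thus $H^1(\mathcal E,Z\to G)=\bigcup_A H^1(\mathcal E,A\to G)$. The index system is directed, since $A\cdot A'$ is again a finite $F$-subgroup, so this union is the direct limit.

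For the abelianized version, I will check that the equivalence relation \eqref{def:gerbe-abelianization} on $H^1(\mathcal E,Z\to G)$ restricts to the relation on $H^1(\mathcal E,A\to G)$. The relation is defined fiberwise, $[y]=t_x(\rho_{x,*}[a])$, and it preserves $\nu$. The inner form $G_x$, the cover $G_x^{\mathrm{sc}}$, and the twisting map $t_x$ depend only on $(x,\nu)$, not on whether we regard $\nu$ as landing in $A$ or in $Z$. The relation is also stated to be functorial in $(Z\to G)$, applied here to the map $(A\to G)\to(Z\to G)$. Consequently the equivalence classes in $H^1(\mathcal E,A\to G)$ are exactly the equivalence classes in $H^1(\mathcal E,Z\to G)$ meeting that subset, and each such class is entirely contained in it. Taking quotients then gives injective transition maps and a bijection from the limit, by the same union argument.

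Finally, suppose $X^*(D)$ is torsion, and let $\nu\colon D\to Z$ be an $F$-homomorphism. Its image is the $F$-subgroup of multiplicative type whose character group is the image of $X^*(Z)\to X^*(D)$. That image is a finitely generated subgroup of a torsion group, hence finite, so the image of $\nu$ is a finite $F$-subgroup of $Z$. One point needs care here because $D$ is only pro-algebraic: the map $\nu$ factors through some $D_i$, and $X^*(D)=\varinjlim X^*(D_i)$, so the character-group description of the image still applies. I expect this to be the only mildly delicate point; the rest is bookkeeping.
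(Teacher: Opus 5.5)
Your proposal is correct and follows essentially the same route as the paper. Cocycles at level $Z$ form the union of those at finite levels $A$. Both the $G(\bar F)$-action and the abelianization relation preserve $\nu$ and do not depend on which finite $A$ one regards $\nu$ as landing in, which gives injective transition maps and the two bijections. The only minor difference is in the torsion case: you take the finite image of $\nu$ itself as the subgroup $A$, whereas the paper uses $Z[n]$ for an $n$ annihilating the image of $\nu^*\colon X^*(Z)\to X^*(D)$; both work.
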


\begin{proof}
For $x\in Z^1(\mathcal E,Z\to G)$, its restriction
$\nu=x|_D$ is an algebraic $F$-homomorphism $D\to Z$.
By the assumption, $\nu$ factors through some finite
$F$-subgroup $A\subseteq Z$, hence $x$ belongs to
$Z^1(\mathcal E,A\to G)$.
It follows that
\[
Z^1(\mathcal E,Z\to G)
=
\bigcup_{A\subseteq Z,\, A\text{ finite}/F}
Z^1(\mathcal E,A\to G).
\]

All these cocycle sets carry the $G(\bar F)$-action \eqref{eq:act-G-to-Z1}, which preserves restriction to $D$ since $Z$ is central.
If two cocycles in this subset become cohomologous after
enlarging $A$, the same element of $G(\bar F)$ already gives the equivalence before enlarging $A$.
It follows that the transition maps on $H^1$ are injective.

We next consider abelianization.
Let $G_x$ be the inner form associated with $x$, and write
$\rho_x\colon G_x^{\mathrm{sc}}\to G_x$ for the natural map.
By \eqref{def:gerbe-abelianization}, abelianization identifies $[x]$ with
\[
[(\rho_x\circ a\circ\pi) \cdot x,\nu],
\qquad a\in Z^1(F,G_x^{\mathrm{sc}}).
\]
Since $a_1=1$, the restriction to $D$ remains the same.
The inner form $G_x$, the map $\rho_x$, and the set
of allowed cocycles $Z^1(F,G_x^{\mathrm{sc}})$ are
independent of the finite subgroup $A$ through which
$x|_D$ factors.
Thus the (abelianization) relation \eqref{def:gerbe-abelianization} on
$H^1(\mathcal E,A\to G)$ is precisely the restriction of the
relation on $H^1(\mathcal E,Z\to G)$.
This proves injectivity of the abelianized transition maps.
Every abelianized class has a cocycle representative appearing
at a finite subgroup, hence the second bijection follows.

Finally, suppose that $X^*(D)$ is torsion.
For an algebraic $F$-homomorphism $\nu\colon D\to Z$, the image of
$\nu^*\colon X^*(Z)\to X^*(D)$
is a finitely generated torsion group, hence finite.
Choose $n\geq 1$ annihilating this image.
Then $\nu^*$ factors through
\[
X^*(Z)/nX^*(Z)=X^*(Z[n]),
\]
hence $\nu$ factors through the finite $F$-subgroup $Z[n]$.
\end{proof}

\section{Formulas over local fields}
\label{sec:local-formula}

Throughout this section, let $F$ be a local field of characteristic
zero, let $G$ be a connected reductive $F$-group, and let
$Z\subseteq Z(G)$ be an algebraic $F$-subgroup.
Let $\mathcal E$ be a Galois gerbe over $F$ banded by $D$.
Set
\[
\tilde\pi_1(G,Z)
=\varinjlim_{A\subseteq Z,\,A\text{ finite}/F}\pi_1(G/A).
\]

\subsection{Kaletha’s local rigid gerbe}

For a finite Galois extension $E/F$ and a positive integer $n$, set
\[
u_{E/F,n}=\operatorname{Res}_{E/F}\mu_n/\mu_n,
\]
where the denominator is embedded diagonally.
For $E\subseteq K$ and $n\mid m$, the composite
\[
\operatorname{Res}_{K/F}\mu_m
\xrightarrow{(-)^{m/n}}\operatorname{Res}_{K/F}\mu_n
\xrightarrow{N_{K/E}}\operatorname{Res}_{E/F}\mu_n
\]
induces a surjective transition map $u_{K/F,m}\to u_{E/F,n}$.
Following \cite[Section 3.1]{Kal16}, let
\[
u=\varprojlim_{E/F,n}u_{E/F,n}.
\]
Its character module is given by
\[
X^*(u)=
\begin{cases}
\{h:\Gamma\to\mathbb Q/\mathbb Z\mid
  h\text{ locally constant}\}
&\textit{if } F\text{ is non-Archimedean},\\
\{h:\Gamma\to\mathbb Q/\mathbb Z\mid 
  \sum_{\sigma\in\Gamma}h(\sigma)=0\}
&\textit{if }F=\mathbb R,\\
0 &\textit{if }F=\mathbb C,
\end{cases}
\]
with action $(\tau h)(\sigma)=h(\tau^{-1}\sigma)$.
By \cite[Theorem 3.1]{Kal16}, we have
\[
H^1(F,u)=0,\qquad
H^2(F,u)\simeq
\begin{cases}
\hat{\mathbb Z} &\textit{if } F\text{ is non-Archimedean},\\
\mathbb Z/2\mathbb Z &\textit{if } F=\mathbb R,\\
0 &\textit{if } F=\mathbb C.
\end{cases}
\]
For $F\ne\mathbb C$, let $\xi_{\mathrm{rig}}$ correspond to $-1$,
and choose an extension representing it
\[
1\longrightarrow u(\bar F)
\longrightarrow\mathcal{E}^{\operatorname{rig}}
\longrightarrow\Gamma\longrightarrow1.
\]
For $F=\mathbb C$, take $u=1$ and $\mathcal{E}^{\operatorname{rig}}=1$.
Since $H^1(F,u)=0$, the cohomology sets obtained from
extensions representing $\xi_{\mathrm{rig}}$ are
canonically identified by
Lemma~\ref{lem:comparison-dependence}(3).

\begin{lemma}\label{lem:u-nonfinite}
Every $F$-homomorphism $u\to Z$ factors through $Z[n]$ for
some positive integer $n$.
Evaluation at $1\in\Gamma$ gives a natural injection
\[
\operatorname{Hom}_F(u,Z)
\hookrightarrow
\operatorname{Hom}_{\mathbb Z}(X^*(Z),\mathbb Q/\mathbb Z)
\]
whose image is
\begin{equation}\label{eq:local-rigid-band}
\begin{cases}
\operatorname{Hom}_{\mathbb Z}(X^*(Z),\mathbb Q/\mathbb Z)
&\textit{if }F\text{ non-Archimedean},\\
\{\lambda \mid
  \lambda(\chi)+\lambda(c\chi)=0
  \text{ for every }\chi\in X^*(Z)\}
&\textit{if }F=\mathbb R,\\
0&\textit{if }F=\mathbb C,
\end{cases}
\end{equation}
where $c$ denotes complex conjugation.
In particular, for non-Archimedean $F$,
\begin{equation}\label{eq:u-nonfinite}
\operatorname{Hom}_F(u,Z)
\simeq
\operatorname{Hom}_{\mathbb Z}(X^*(Z),\mathbb Q/\mathbb Z).
\end{equation}
\end{lemma}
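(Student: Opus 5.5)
I will work entirely on character modules, using the anti-equivalence between $F$-groups of multiplicative type and discrete $\Gamma$-modules. Under it, $\operatorname{Hom}_F(u,Z)$ corresponds to $\operatorname{Hom}_\Gamma(X^*(Z),X^*(u))$. Since $X^*(Z)$ is finitely generated, the image of any such $\Gamma$-map $\psi$ is a finitely generated subgroup of the torsion group $X^*(u)$ (functions into $\mathbb Q/\mathbb Z$), hence finite. Choose $n$ annihilating this image. Then $\psi$ factors through $X^*(Z)/nX^*(Z)=X^*(Z[n])$, exactly as in the last paragraph of the proof of Lemma~\ref{lem:abelianization-colimit}. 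This proves the factorization claim.

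For the injection, I send $\psi$ to $\lambda_\psi(\chi)=\psi(\chi)(1)$. To see that this is injective, I use equivariance. With the action $(\tau h)(\sigma)=h(\tau^{-1}\sigma)$, we get
\[
\psi(\chi)(\sigma)=(\sigma^{-1}\psi(\chi))(1)=\psi(\sigma^{-1}\chi)(1)=\lambda_\psi(\sigma^{-1}\chi).
\]
So $\psi$ is recovered from $\lambda_\psi$, which gives injectivity. Naturality in $Z$ is clear from the construction.

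For the image, I go in the reverse direction. Given $\lambda\in\operatorname{Hom}_{\mathbb Z}(X^*(Z),\mathbb Q/\mathbb Z)$, define $\psi(\chi)(\sigma)=\lambda(\sigma^{-1}\chi)$; this is $\Gamma$-equivariant by the same computation. I then check when $\psi(\chi)$ lies in $X^*(u)$, case by case.
\begin{itemize}
\item[(a)] \emph{$F$ non-Archimedean.} $\Gamma$ acts on $X^*(Z)$ through a finite quotient, so $\sigma\mapsto\lambda(\sigma^{-1}\chi)$ is locally constant. Hence every $\lambda$ lies in the image, which gives \eqref{eq:u-nonfinite}.
\item[(b)] \emph{$F=\mathbb R$.} Here $\Gamma=\{1,c\}$, so the condition $\sum_\sigma\psi(\chi)(\sigma)=0$ reads $\lambda(\chi)+\lambda(c\chi)=0$ for all $\chi$. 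This is exactly the stated image.
\item[(c)] \emph{$F=\mathbb C$.} Here $X^*(u)=0$, so the image is $0$.
\end{itemize}

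The one step needing care is the description of $X^*(u)$. I will take the formula recalled above from \cite[Section 3.1]{Kal16}, including its $\Gamma$-action, as given; the rest is the formal computation above. A minor point is the sign convention, that is, evaluation at $1$ combined with $\sigma^{-1}$ in the action. Any consistent convention gives the same image, because the defining conditions are $\Gamma$-stable.
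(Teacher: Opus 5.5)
Your proposal is correct and follows essentially the same route as the paper. Both identify $\operatorname{Hom}_F(u,Z)$ with $\operatorname{Hom}_\Gamma(X^*(Z),X^*(u))$, invert evaluation at $1$ by $\lambda\mapsto\bigl(\chi\mapsto(\sigma\mapsto\lambda(\sigma^{-1}\chi))\bigr)$, read off the three images from Kaletha's description of $X^*(u)$, and deduce the factorization through $Z[n]$ from the finiteness of the image of a finitely generated group in a torsion group; the differences are only cosmetic (you prove the factorization first and write out the equivariance computation the paper leaves implicit).
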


\begin{proof}
We may identify $\operatorname{Hom}_F(u,Z)$ with
$\operatorname{Hom}_\Gamma(X^*(Z),X^*(u))$.
The inverse to evaluation at $1$ sends $\lambda$ to the map
assigning to $\chi$ the function
$\sigma\mapsto\lambda(\sigma^{-1}\chi)$.
This function is locally constant since the action on $X^*(Z)$ factors through a finite quotient.
The descriptions of $X^*(u)$ give the three desired images.

Finally, a homomorphism from the finitely generated group
$X^*(Z)$ to the torsion group $X^*(u)$ has finite image.
An integer annihilating that image gives the factorization
through $Z[n]$.
\end{proof}

For finite $A$ and non-Archimedean $F$, this recovers
\begin{equation}\label{eq:u-finite}
\operatorname{Hom}_F(u,A)
\simeq
\operatorname{Hom}_{\mathbb Z}(X^*(A),\mathbb Q/\mathbb Z).
\end{equation}
For every local $F$, Lemma~\ref{lem:u-nonfinite} and Lemma~\ref{lem:abelianization-colimit} give
\begin{equation}\label{eq:rigid-colimit}
H^1(\mathcal{E}^{\operatorname{rig}},Z\to G)
=
\varinjlim_{A\subseteq Z,\,A\text{ finite}/F}
H^1(\mathcal{E}^{\operatorname{rig}},A\to G),
\end{equation}
with injective transition maps, and the same assertion holds
for $H^1_{\mathrm{ab}}$.
This agrees with \cite[(3.8)]{Kal18b}.

Fix a maximal $F$-torus $T\subseteq G$.
Let $I_\Gamma\subseteq\mathbb Z[\Gamma]$ be the augmentation
ideal, generated by $\sigma-1$ for $\sigma\in\Gamma$.
Hence
\[
\begin{aligned}
I_\Gamma\pi_1(G)
&=\langle \sigma m-m:\sigma\in\Gamma,\ m\in\pi_1(G)\rangle_{\mathbb Z},\\
\pi_1(G)_\Gamma
&=\pi_1(G)/I_\Gamma\pi_1(G).
\end{aligned}
\]
For a finite Galois extension $E/F$ splitting $T$, let
\[
N_{E/F}=\sum_{\sigma\in\Gamma_{E/F}}\sigma,\qquad
\tilde\pi_1(G,Z)^{N_{E/F}}
=
\ker\bigl(
N_{E/F}:\tilde\pi_1(G,Z)\to\tilde\pi_1(G,Z)
\bigr).
\]
The norm annihilates $I_\Gamma \pi_1(G)$.
For $E\subseteq E'$, the transition maps on the norm kernels
are induced by the identity, since
$N_{E'/F}=[E':E]N_{E/F}$.

\begin{theorem}[{\cite[Theorem~4.11 and (4.9)]{Kal16}}]\label{thm:local-rigid-formula}
There is a natural bijection of pointed sets
\begin{equation}\label{eq:local-rigid-formula}
\kappa_{\mathrm{rig}}:
H^1_{\mathrm{ab}}(\mathcal{E}^{\operatorname{rig}},Z\to G)
\xrightarrow{\sim}
\varinjlim_{E/F}
\frac{\tilde\pi_1(G,Z)^{N_{E/F}}}{I_\Gamma \pi_1(G)}.
\end{equation}
The target is canonically identified with
\begin{equation}\label{eq:local-rigid-targets}
\begin{cases}
\displaystyle
\left(
\frac{\tilde\pi_1(G,Z)}{I_\Gamma \pi_1(G)}
\right)_{\mathrm{tors}}
&\textit{if }F\text{ non-Archimedean},\\[7pt]
\displaystyle
\frac{
\ker(1+c:\tilde\pi_1(G,Z)\to\tilde\pi_1(G,Z))
}{
(c-1)\pi_1(G)
}
&\textit{if } F=\mathbb R,\\[7pt]
0&\textit{if } F=\mathbb C.
\end{cases}
\end{equation}
Restriction to $u$ corresponds to the map induced by
\[
\tilde\pi_1(G,Z)^{N_{E/F}}
\longrightarrow
\tilde\pi_1(G,Z)/\pi_1(G)
\xrightarrow{\sim}
\operatorname{Hom}_{\mathbb Z}(X^*(Z),\mathbb Q/\mathbb Z).
\]
Its image lies in the subgroup of
Lemma \ref{lem:u-nonfinite}.
\end{theorem}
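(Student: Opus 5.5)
The plan is to reduce to finite central subgroups, quote Kaletha's finite-level result, and pass to the limit. By \eqref{eq:rigid-colimit}, which holds for $H^1_{\mathrm{ab}}$ as well, we have
\[
H^1_{\mathrm{ab}}(\mathcal E^{\mathrm{rig}},Z\to G)=\varinjlim_{A} H^1_{\mathrm{ab}}(\mathcal E^{\mathrm{rig}},A\to G)
\]
with injective transition maps, where $A$ runs over the finite $F$-subgroups $A\subseteq Z$. For each finite $A$, \cite[Theorem~4.11]{Kal16} supplies a natural bijection
\[
H^1_{\mathrm{ab}}(\mathcal E^{\mathrm{rig}},A\to G)\simeq \varinjlim_{E/F}\frac{\pi_1(G/A)^{N_{E/F}}}{I_\Gamma\pi_1(G)}.
\]
Kaletha phrases this as $[X_*(T/A)/X_*(T^{\mathrm{sc}})]^{N_{E/F}}$ modulo $I_\Gamma$ of $X_*(T)/X_*(T^{\mathrm{sc}})$, and Lemma~\ref{lem:finite-quotient} identifies this with the displayed form.

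The first thing to check is naturality in $A$. For $A\subseteq A'$, the map on the left is induced by the inclusion $(A\to G)\to(A'\to G)$. On the right it should be the injection $\pi_1(G/A)\hookrightarrow\pi_1(G/A')$ of Lemma~\ref{lem:finite-quotient}, which is the identity on $\pi_1(G)$ and therefore on $I_\Gamma\pi_1(G)$. I would verify this by following Kaletha's construction through a fixed maximal torus $T$. Cohomology classes are represented by $T$-valued cocycles, and under $\kappa_{\mathrm{rig}}$ the band datum goes to an element of $X_*(T/A)$. Enlarging $A$ replaces $X_*(T/A)$ by the larger lattice $X_*(T/A')$ containing it.

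Since filtered colimits commute with each other and with quotients, and taking norm kernels commutes with filtered colimits, we get
\[
\varinjlim_A\varinjlim_E \frac{\pi_1(G/A)^{N_{E/F}}}{I_\Gamma\pi_1(G)}=\varinjlim_E\frac{\tilde\pi_1(G,Z)^{N_{E/F}}}{I_\Gamma\pi_1(G)}.
\]
This gives \eqref{eq:local-rigid-formula}. The restriction-to-$u$ statement is also part of Kaletha's theorem at each finite level, where it is \cite[(4.9)]{Kal16}. It passes to the limit by Proposition~\ref{prop:colimit-quotient} together with the identification \eqref{eq:u-finite}. The containment in the subgroup of Lemma~\ref{lem:u-nonfinite} is automatic, because every element of $H^1$ restricts to an element of $\operatorname{Hom}_F(u,Z)$.

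It remains to identify the targets \eqref{eq:local-rigid-targets}. For $F=\mathbb C$ there is nothing to prove. For $F=\mathbb R$ the only $E$ in the system are $\mathbb R$ and $\mathbb C$, and the norm for $\mathbb C/\mathbb R$ is $1+c$. Moreover $I_\Gamma\pi_1(G)=(c-1)\pi_1(G)$, since $\Gamma=\{1,c\}$.

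For non-Archimedean $F$, each norm-kernel quotient sits in $\tilde\pi_1(G,Z)/I_\Gamma\pi_1(G)$, so we must show that the union $\bigcup_E\tilde\pi_1(G,Z)^{N_{E/F}}/I_\Gamma\pi_1(G)$ equals the torsion subgroup. For one inclusion, take $x$ with $N_{E/F}x=0$ and choose $E$ splitting $T$. The group $\tilde\pi_1/\pi_1$ is torsion, so $mx\in\pi_1(G)$ for some $m$, and then $N_{E/F}(mx)=0$. The class of $mx$ in $\pi_1(G)_{\Gamma_{E/F}}$ therefore lies in $\hat H^{-1}(\Gamma_{E/F},\pi_1(G))$, which is killed by $|\Gamma_{E/F}|$. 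Hence $x$ is torsion modulo $I_\Gamma\pi_1(G)$.

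For the converse, suppose $nx\in I_\Gamma\pi_1(G)$. Then $N_{E/F}(nx)=0$, so $N_{E/F}x$ is an $n$-torsion element of $\tilde\pi_1(G,Z)$ and lies in a finite subgroup. Replacing $E$ by a larger $E'$ multiplies the norm by $[E':E]$. Choosing $E'$ with $n\mid[E':E]$ therefore gives $N_{E'/F}x=0$.

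I expect the main obstacle to be the naturality of Kaletha's finite-level bijection in $A$. His construction passes through $T$-valued cocycles and Tate--Nakayama isomorphisms that depend on auxiliary choices. One must check that enlarging $A$ commutes with these choices, so that the colimit isomorphism is well defined and independent of $T$.
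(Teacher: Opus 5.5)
Your proposal is correct and follows the paper's proof essentially step for step: you reduce to finite $A$ via the colimit over finite subgroups, invoke Kaletha's Theorem~4.11 and (4.9), commute the filtered colimits with norm kernels and quotients, and identify the non-Archimedean target through the same two inclusions (your $\hat H^{-1}$ argument is the paper's identity $d[E:F]m'=\sum_{\sigma}(dm'-\sigma(dm'))$). The one detail you should supply is why some $E'\supseteq E$ with $n\mid[E':E]$ exists, since that is where the non-Archimedean hypothesis enters; the paper takes a Galois closure over $F$ of the unramified extension of $E$ of degree $n$.
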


\begin{proof}
For finite $A\subseteq Z$ and $F\ne\mathbb C$,
\cite[Theorem 4.11 and (4.9)]{Kal16} gives
\begin{equation}\label{eq:norm-target}
H^1_{\mathrm{ab}}(\mathcal{E}^{\operatorname{rig}},A\to G)
\simeq
\varinjlim_{E/F}
\frac{\ker(N_{E/F}:\pi_1(G/A)\to \pi_1(G/A))}{I_\Gamma \pi_1(G)},
\end{equation}
compatibly with restriction.
The action on $\pi_1(G/A)$ factors through $\Gamma_{E/F}$, hence
$I_{\Gamma_{E/F}}\pi_1(G)=I_\Gamma \pi_1(G)$.
Since filtered colimits commute with kernels and quotients, taking the colimit over $A$ proves
\eqref{eq:local-rigid-formula}.

Suppose $F$ is non-Archimedean.
Choose $d\geq1$ with $d\pi_1(G/A)\subseteq \pi_1(G)$.
For $m'\in\pi_1(G/A)$ satisfying $N_{E/F}m'=0$, we have
\[
d[E:F]m'
=
\sum_{\sigma\in\Gamma_{E/F}}
\bigl(dm'-\sigma(dm')\bigr)
\in I_\Gamma \pi_1(G).
\]
Conversely, if $rm'\in I_\Gamma \pi_1(G)$, then $rN_{E/F}m'=0$.
Choose a finite Galois extension $E'/F$ containing $E$ with
$r\mid[E':E]$, for example a Galois closure over $F$ of the
unramified extension of $E$ of degree $r$.
Then $N_{E'/F}m'=0$.
Thus the colimit in \eqref{eq:norm-target} is identified with 
$(\pi_1(G/A)/I_\Gamma \pi_1(G))_{\mathrm{tors}}$.
Taking the colimit over $A$ proves the description when $F$ is non-Archimedean.

For $F=\mathbb R$, the extension $\mathbb C/\mathbb R$ is cofinal,
and its norm is $1+c$.
For $F=\mathbb C$, the norm is the identity and the rigid
cohomology is trivial.
The assertion about restriction follows from Kaletha's
compatibility and \eqref{eq:colimit-quotient}.
\end{proof}

\begin{corollary}\label{prop:rigid}
If $F$ is non-Archimedean, there is a natural bijection
\begin{equation}\label{eq:rigid-formula}
H^1(\mathcal{E}^{\operatorname{rig}},Z\to G)
\xrightarrow{\sim}
\left(
\frac{\tilde\pi_1(G,Z)}{I_\Gamma \pi_1(G)}
\right)_{\mathrm{tors}}.
\end{equation}
Restriction to $u$ corresponds to
\begin{equation}\label{eq:first-factor-map}
\left(
\frac{\tilde\pi_1(G,Z)}{I_\Gamma \pi_1(G)}
\right)_{\mathrm{tors}}
\longrightarrow
\frac{\tilde\pi_1(G,Z)}{\pi_1(G)}
\xrightarrow{\sim}
\operatorname{Hom}_{\mathbb Z}(X^*(Z),\mathbb Q/\mathbb Z).
\end{equation}
\end{corollary}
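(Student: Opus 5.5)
The corollary follows by combining Theorem~\ref{thm:local-rigid-formula} with the bijectivity of abelianization over non-Archimedean local fields. First, apply Corollary~\ref{cor:abelianization-bijective} with $\mathcal E=\mathcal E^{\mathrm{rig}}$.

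For every $x\in Z^1(\mathcal E^{\mathrm{rig}},Z\to G)$, the inner form $G_x^{\mathrm{sc}}$ is semisimple and simply connected. Hence $H^1(F,G_x^{\mathrm{sc}})=1$ by Kneser--Bruhat--Tits, and
\[
\operatorname{ab}\colon H^1(\mathcal E^{\mathrm{rig}},Z\to G)\to H^1_{\mathrm{ab}}(\mathcal E^{\mathrm{rig}},Z\to G)
\]
is a bijection. It is natural in $(Z\to G)$ because the abelianization relation \eqref{def:gerbe-abelianization} is functorial. It also commutes with restriction to the band $u$, since that relation preserves $\nu$.

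Second, compose $\operatorname{ab}$ with $\kappa_{\mathrm{rig}}$ from \eqref{eq:local-rigid-formula}. Then use the non-Archimedean case of \eqref{eq:local-rigid-targets}, which identifies $\varinjlim_{E/F}\tilde\pi_1(G,Z)^{N_{E/F}}/I_\Gamma\pi_1(G)$ with $(\tilde\pi_1(G,Z)/I_\Gamma\pi_1(G))_{\mathrm{tors}}$. This composite is the bijection \eqref{eq:rigid-formula}. Naturality follows from the naturality of both factors, given that the identification in \eqref{eq:local-rigid-targets} is induced by the inclusions of the norm kernels into $\tilde\pi_1(G,Z)$.

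Third, identify the restriction map. Theorem~\ref{thm:local-rigid-formula} says that restriction to $u$ corresponds to the map induced by $\tilde\pi_1(G,Z)^{N_{E/F}}\to\tilde\pi_1(G,Z)/\pi_1(G)\simeq\operatorname{Hom}_{\mathbb Z}(X^*(Z),\mathbb Q/\mathbb Z)$. Here the last isomorphism is \eqref{eq:colimit-quotient}, and the target is identified with $\operatorname{Hom}_F(u,Z)$ via \eqref{eq:u-nonfinite}.

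The one point to check is compatibility with the torsion description. The torsion description is obtained by sending a norm-kernel element $m'$ to its class modulo $I_\Gamma\pi_1(G)$. Since $I_\Gamma\pi_1(G)\subseteq\pi_1(G)$, reducing further modulo $\pi_1(G)$ gives exactly \eqref{eq:first-factor-map}. So the two descriptions of the restriction agree on representatives.

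I expect the only potential obstacle to be bookkeeping of this compatibility between the colimit identification and the reduction map. It reduces to the observation that both maps are induced by the identity on representatives in $\tilde\pi_1(G,Z)$, together with the definitions in Lemma~\ref{lem:u-nonfinite}.
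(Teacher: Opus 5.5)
Your proposal is correct and follows the paper's proof: the paper invokes Corollary~\ref{cor:abelianization-bijective} (Kneser--Bruhat--Tits) to identify $H^1(\mathcal E^{\mathrm{rig}},Z\to G)$ with its abelianization, and then applies Theorem~\ref{thm:local-rigid-formula} with its non-Archimedean target. Your additional check, that reducing norm-kernel representatives modulo $I_\Gamma\pi_1(G)$ and then modulo $\pi_1(G)$ gives \eqref{eq:first-factor-map}, simply spells out a compatibility the paper leaves implicit.
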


\begin{proof}
One has $H^1(\mathcal{E}^{\operatorname{rig}},Z \to G) \simeq H^1_{\operatorname{ab}}(\mathcal{E}^{\operatorname{rig}},Z \to G)$ by Corollary \ref{cor:abelianization-bijective}, and
Theorem~\ref{thm:local-rigid-formula} applies.
\end{proof}

\subsection{Comparison with the rigid gerbe}

\begin{theorem}\label{thm:main-bis}
Suppose that there exists a morphism
$f\colon\mathcal E^{\mathrm{rig}}\to\mathcal E$ over $\Gamma$
inducing an $F$-homomorphism $\phi\colon u\to D$ on the bands.
Then $f$ induces the following bijections of pointed sets,
natural in $(Z\to G)$,
\begin{equation}\label{eq:main}
H^1_{\mathrm{ab}}(\mathcal E,Z\to G)
\xrightarrow{\sim}
\left(
\varinjlim_{E/F}
\frac{\tilde\pi_1(G,Z)^{N_{E/F}}}{I_\Gamma \pi_1(G)}
\right)
\underset{\operatorname{Hom}_F(u,Z)}{\times}
\operatorname{Hom}_F(D,Z),
\end{equation}
sending $\operatorname{ab}[x,\nu]$ to
$\bigl(\kappa_{\mathrm{rig}}(\operatorname{ab}f^*[x,\nu]),\nu\bigr)$.
The maps to the base are the restriction map of
Theorem~\ref{thm:local-rigid-formula} and
$\nu\mapsto\nu\circ\phi$.

Moreover, abelianization and pullback give a natural bijection
\begin{equation}\label{eq:local-full-recovery}
H^1(\mathcal E,Z\to G)
\xrightarrow{\sim}
H^1_{\mathrm{ab}}(\mathcal E,Z\to G)
\underset{H^1_{\mathrm{ab}}(\mathcal E^{\mathrm{rig}},Z\to G)}{\times}
H^1(\mathcal E^{\mathrm{rig}},Z\to G).
\end{equation}
The maps to the base are $f^*$ and abelianization.

If $F$ is non-Archimedean or complex, abelianization is
bijective, hence \eqref{eq:main} also describes
$H^1(\mathcal E,Z\to G)$.
For non-Archimedean $F$, its first factor and base identify with
\[
\left(
\frac{\tilde\pi_1(G,Z)}{I_\Gamma \pi_1(G)}
\right)_{\mathrm{tors}}
\quad\text{and}\quad
\operatorname{Hom}_{\mathbb Z}(X^*(Z),\mathbb Q/\mathbb Z),
\]
respectively.
For $F=\mathbb C$, restriction gives
\[
H^1(\mathcal E,Z\to G)
\xrightarrow{\sim}\operatorname{Hom}_{\mathbb C}(D,Z).
\]

These bijections are natural in $(Z\to G)$.
Their dependence on $f$ is described by
Lemma~\ref{lem:comparison-dependence};
in particular, for fixed $\phi$, they are independent
of $f$ whenever $H^1(F,D)=0$.

If $D$ is of finite type, a comparison $f$ with a suitable
band map $\phi$ exists by
Corollary~\ref{cor:comparison-exist-finite-type}.
\end{theorem}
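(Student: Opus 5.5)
The proof will be assembled almost entirely from Proposition~\ref{prop:comparison-formal-ab} and Theorem~\ref{thm:local-rigid-formula}. First, apply Proposition~\ref{prop:comparison-formal-ab} to $f\colon\mathcal E^{\mathrm{rig}}\to\mathcal E$ with $\mathcal E'=\mathcal E^{\mathrm{rig}}$ and $D'=u$. This gives the natural bijection
\[
H^1_{\mathrm{ab}}(\mathcal E,Z\to G)\xrightarrow{\sim}
H^1_{\mathrm{ab}}(\mathcal E^{\mathrm{rig}},Z\to G)\underset{\operatorname{Hom}_F(u,Z)}{\times}\operatorname{Hom}_F(D,Z),
\]
sending $\operatorname{ab}[x,\nu]$ to $(\operatorname{ab}f^*[x,\nu],\nu)$. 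Next, compose the first factor with $\kappa_{\mathrm{rig}}$ of Theorem~\ref{thm:local-rigid-formula}. That theorem says $\kappa_{\mathrm{rig}}$ carries restriction to $u$ to the map induced by reduction modulo $\pi_1(G)$. Hence $\kappa_{\mathrm{rig}}\times\mathrm{id}$ is an isomorphism of fiber products over $\operatorname{Hom}_F(u,Z)$, which yields \eqref{eq:main}. Here $\operatorname{Hom}_F(u,Z)$ is viewed inside $\operatorname{Hom}_{\mathbb Z}(X^*(Z),\mathbb Q/\mathbb Z)$ via Lemma~\ref{lem:u-nonfinite}. Naturality in $(Z\to G)$ follows from the naturality of both ingredients. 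The full-recovery statement \eqref{eq:local-full-recovery} is exactly \eqref{eq:comparison-full-recovery} for this $f$.

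For the non-Archimedean and complex cases, Corollary~\ref{cor:abelianization-bijective} gives that $\operatorname{ab}$ is bijective for $\mathcal E$. Its hypothesis is the vanishing of $H^1(F,G_x^{\mathrm{sc}})$, which is a statement about $F$ only. For $F$ non-Archimedean, the identification of the first factor with $(\tilde\pi_1(G,Z)/I_\Gamma\pi_1(G))_{\mathrm{tors}}$ is \eqref{eq:local-rigid-targets}. The identification of the base with $\operatorname{Hom}_{\mathbb Z}(X^*(Z),\mathbb Q/\mathbb Z)$ is \eqref{eq:u-nonfinite}. This also proves Theorem~\ref{thm:main}. For $F=\mathbb C$, the first factor is $0$ and $\operatorname{Hom}_{\mathbb C}(u,Z)=0$, so the fiber product collapses to $\operatorname{Hom}_{\mathbb C}(D,Z)$. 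The resulting map is just $N$.

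The remaining assertions are citations. The dependence on $f$ is Lemma~\ref{lem:comparison-dependence}(2): replacing $f$ by $a_\alpha\circ f$ changes $f^*$ by the $H^1(F,D)$-action and leaves $\nu$ unchanged, so for $H^1(F,D)=0$ the bijections depend only on $\phi$. Existence for $D$ of finite type is Corollary~\ref{cor:comparison-exist-finite-type}.

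The main point needing care is the compatibility of base maps. We must check that the restriction-to-$u$ map on $H^1_{\mathrm{ab}}(\mathcal E^{\mathrm{rig}},Z\to G)$, after transport through $\kappa_{\mathrm{rig}}$, equals the map of Theorem~\ref{thm:local-rigid-formula} composed with the evaluation-at-$1$ embedding of Lemma~\ref{lem:u-nonfinite}, with matching sign conventions. I would take this from the restriction clause of Theorem~\ref{thm:local-rigid-formula}. Since that embedding is injective, the fiber product over $\operatorname{Hom}_F(u,Z)$ agrees with the one over $\operatorname{Hom}_{\mathbb Z}(X^*(Z),\mathbb Q/\mathbb Z)$, so no information is lost in the change of base.
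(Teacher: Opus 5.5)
Your proposal is correct and follows essentially the same route as the paper: apply Fact~\ref{fact:cartesian} and Proposition~\ref{prop:comparison-formal-ab} to $f$, substitute $\kappa_{\mathrm{rig}}$ using the restriction compatibility of Theorem~\ref{thm:local-rigid-formula}, and read off the special cases from Corollary~\ref{cor:abelianization-bijective}, \eqref{eq:local-rigid-band}, and \eqref{eq:local-rigid-targets}. Your observation that the injectivity of $\operatorname{Hom}_F(u,Z)\hookrightarrow\operatorname{Hom}_{\mathbb Z}(X^*(Z),\mathbb Q/\mathbb Z)$ lets you change the base without changing the fiber product makes explicit a point the paper leaves implicit.
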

\begin{proof}
Applying Fact \ref{fact:cartesian} and Proposition~\ref{prop:comparison-formal-ab} to $f$ gives
\[
H^1(\mathcal E,Z\to G)
\xrightarrow{\sim}
H^1(\mathcal{E}^{\operatorname{rig}},Z\to G)
\underset{\operatorname{Hom}_F(u,Z)}{\times}
\operatorname{Hom}_F(D,Z)
\]
and
\[
H^1_{\mathrm{ab}}(\mathcal E,Z\to G)
\xrightarrow{\sim}
H^1_{\mathrm{ab}}(\mathcal{E}^{\operatorname{rig}},Z\to G)
\underset{\operatorname{Hom}_F(u,Z)}{\times}
\operatorname{Hom}_F(D,Z).
\]
Theorem~\ref{thm:local-rigid-formula} now gives
\eqref{eq:main} and its compatibility with restriction.
Combining the two fiber-product descriptions gives
\eqref{eq:local-full-recovery}.

By Corollary~\ref{cor:abelianization-bijective},
abelianization is bijective when $F$ is non-Archimedean.
The same corollary applies over $\mathbb C$, since
ordinary Galois cohomology is trivial.
The specializations follow from
\eqref{eq:local-rigid-band} and \eqref{eq:local-rigid-targets}.
The dependence on $f$ follows from
Lemma~\ref{lem:comparison-dependence}.
All constructions are natural in $(Z\to G)$.
\end{proof}

\begin{remark}
Taking $\mathcal E=\mathcal{E}^{\operatorname{rig}}$ and
$f=\operatorname{id}$, the second map to the base in
\eqref{eq:main} is the identity.
The fiber product therefore reduces to
Theorem~\ref{thm:local-rigid-formula}, recovering Kaletha's
local parametrization, including its archimedean forms.
\end{remark}

\subsection{Specialization to the Kottwitz gerbe}
\label{subsec:local-kottwitz}

Let $\mathcal E^{\mathrm{iso}}$ be Kottwitz's local gerbe,
banded by $D_F$, and let
\[
B(F,Z\to G)=H^1(\mathcal E^{\mathrm{iso}},Z\to G).
\]
Since $D_F$ is a protorus, every algebraic homomorphism
$D_F\to Z$ factors through $Z^\circ$. Hence
\begin{equation}\label{eq:kottwitz-connected-center}
B(F,Z\to G)=B(F,Z^\circ\to G).
\end{equation}
If $Z$ is finite, the inflation--restriction sequence
\eqref{eq:inflation} identifies $B(F,Z\to G)$ with $H^1(F,G)$.
For $Z=Z(G)$, this is Kottwitz's basic set
$B(F,G)_{\mathrm{bas}}$.

Write $Y=X_*(Z^\circ)$.
The inclusion of a central torus induces an injection
on algebraic fundamental groups; see
\cite[(11.6)]{Kot14}.
In particular, the inclusion $Z^\circ\hookrightarrow G$
gives an injection $i\colon Y\hookrightarrow \pi_1(G)$.

For $n\geq1$, let $A_n=Z^\circ[n]$ and let
$q_n\colon Z^\circ\to Z^\circ/A_n$ be the quotient map.
The inverse of $q_{n,*}\otimes\mathbb Q$ identifies
$X_*(Z^\circ/A_n)$ with the lattice
$\frac1nY\subseteq Y\otimes\mathbb Q$.
Since $Z^\circ/A_n$ is a central torus in $G/A_n$,
the same injectivity result gives
\[
\iota_n\colon
\frac1nY
\xrightarrow{\sim}
X_*(Z^\circ/A_n)
\hookrightarrow
\pi_1(G/A_n).
\]

For $n\mid m$, the quotient homomorphism
$Z^\circ/A_n\to Z^\circ/A_m$ induces, under these
lattice identifications, the inclusion
$\frac1nY\hookrightarrow\frac1mY$.
Thus the maps $\iota_n$ are compatible with the transition
maps on the fundamental groups, which are injective by
Lemma~\ref{lem:finite-quotient}.
Taking the direct limit over $n$, and then mapping
to the direct limit over all finite $F$-subgroups
of $Z$, gives a canonical injection
\[
\iota\colon
Y\otimes\mathbb Q
=\varinjlim_n\frac1nY
\hookrightarrow
\tilde\pi_1(G,Z).
\]
Its restriction to $Y$ is the composite
$Y\xrightarrow{i}\pi_1(G)\hookrightarrow\tilde\pi_1(G,Z)$.

Recall that
\[
X^*(D_F)=
\begin{cases}
\mathbb Q,&\textit{if }F\text{ is non-Archimedean},\\
\frac12\mathbb Z,&\textit{if }F=\mathbb R,\\
\mathbb Z,&\textit{if }F=\mathbb C,
\end{cases}
\]
with trivial Galois action.
We identify $\operatorname{Hom}_F(D_F,Z)$ with
$\bigl(Y\otimes X^*(D_F)\bigr)^\Gamma$ and thus regard
its elements as invariant rational cocharacters of
$Z^\circ$, subject to the indicated denominator condition.

Fix the standard comparison
\[
f\colon\mathcal E^{\mathrm{rig}}
\longrightarrow\mathcal E^{\mathrm{iso}},
\qquad
\phi\colon u\longrightarrow D_F,
\]
of \cite[Section 3.3]{Kal18b} and
\cite[Section 3.6]{KT23}.
Under Lemma~\ref{lem:u-nonfinite}, composition
with $\phi$ is given by
\begin{equation}\label{eq:kottwitz-local-band-map}
\nu\longmapsto-\iota(\nu)\bmod \pi_1(G).
\end{equation}
Applying Theorem~\ref{thm:main-bis} gives
\begin{equation}\label{eq:kottwitz-local-fiber-product}
H^1_{\mathrm{ab}}(\mathcal E^{\mathrm{iso}},Z\to G)
\quad\xrightarrow{\sim}
\left(
\varinjlim_{K/F}
\frac{\tilde\pi_1(G,Z)^{N_{K/F}}}{I_\Gamma \pi_1(G)}
\right)
\underset{\operatorname{Hom}_F(u,Z)}{\times}
\operatorname{Hom}_F(D_F,Z).
\end{equation}
The two maps to the base are reduction modulo $\pi_1(G)$
and \eqref{eq:kottwitz-local-band-map}.
Thus a pair $([y],\nu)$ belongs to this fiber product
precisely when
\begin{equation}\label{eq:kottwitz-local-integrality}
y+\iota(\nu)\in \pi_1(G).
\end{equation}

The finite-level norms define Kottwitz's norm map
\[
N_G\colon \pi_1(G)_\Gamma\longrightarrow
\bigl(\pi_1(G)\otimes X^*(D_F)\bigr)^\Gamma,
\qquad
[m]\longmapsto
\left(\sum_{\sigma\in\Gamma_{K/F}}\sigma m\right)
\otimes\frac1{[K:F]},
\]
where $K/F$ is a finite Galois extension splitting a
maximal $F$-torus of $G$.
For $F=\mathbb R$ take $K=\mathbb C$, and for
$F=\mathbb C$ take $K=F$.
For non-Archimedean $F$, the displayed expression is
independent of the choice of $K$.
The inclusion $i$ gives an injection
\[
j_Z\colon
\operatorname{Hom}_F(D_F,Z)
=\bigl(Y\otimes X^*(D_F)\bigr)^\Gamma
\hookrightarrow
\bigl(\pi_1(G)\otimes X^*(D_F)\bigr)^\Gamma.
\]
Define
\begin{equation}\label{eq:kottwitz-central-target}
A(F,Z\to G)
\coloneqq\{a\in \pi_1(G)_\Gamma:N_G(a)\in\operatorname{im}(j_Z)\}.
\end{equation}
For $a\in A(F,Z\to G)$, write $\nu_a$ for the unique
homomorphism satisfying $j_Z(\nu_a)=N_G(a)$.
The map $N_G$ and the condition defining $A(F,Z\to G)$
express Kottwitz's norm and centrality condition;
see \cite[Sections~11.4--11.7]{Kot14}.

\begin{lemma}\label{lem:kottwitz-central-target}
For the quotient homomorphism $q\colon G\to G/Z$,
one has
\begin{equation}\label{eq:kottwitz-central-kernel}
A(F,Z\to G)
=\ker\left[
\pi_1(G)_\Gamma\xrightarrow{\,N_{G/Z}\circ q_*\,}
\bigl(\pi_1(G/Z)\otimes X^*(D_F)\bigr)^\Gamma
\right].
\end{equation}
\end{lemma}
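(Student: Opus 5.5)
Set $V=X^*(D_F)\otimes\mathbb Q$; it is $\mathbb Q$ in every case. The plan is to work rationally and reduce the statement to exactness of a short sequence of lattices tensored with $\mathbb Q$.

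\emph{Step 1: the sequence of fundamental groups.} We use the exact sequence
\[
0\longrightarrow Y\xrightarrow{\,i\,}\pi_1(G)\xrightarrow{\,q_*\,}\pi_1(G/Z^\circ)\longrightarrow0,
\]
where the first map is the injection of \cite[(11.6)]{Kot14}. Its cokernel is $\pi_1(G/Z^\circ)$, which one checks with a maximal torus $T\supseteq Z^\circ$: there $X_*(T)/X_*(Z^\circ)=X_*(T/Z^\circ)$, and the coroot lattices agree. The map $G/Z^\circ\to G/Z$ is an isogeny with finite central kernel $Z/Z^\circ$. By Lemma~\ref{lem:finite-quotient}, $\pi_1(G/Z^\circ)\hookrightarrow\pi_1(G/Z)$ with finite cokernel, so after tensoring with $\mathbb Q$ we get $\pi_1(G/Z^\circ)\otimes\mathbb Q=\pi_1(G/Z)\otimes\mathbb Q$. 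Tensoring the displayed sequence with $V$ is exact, and taking $\Gamma$-invariants is still exact on the left. This shows
\[
\ker\bigl[(\pi_1(G)\otimes V)^\Gamma\to(\pi_1(G/Z)\otimes V)^\Gamma\bigr]=(Y\otimes V)^\Gamma .
\]

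\emph{Step 2: compatibility of the norms.} The norm maps $N_G$ and $N_{G/Z}$ are given by the same formula, averaging over $\Gamma_{K/F}$ (after enlarging $K$ to split tori of both groups; the expression is independent of $K$). Hence $N_{G/Z}\circ q_*=(q_*\otimes1)\circ N_G$. Therefore $a$ lies in the kernel on the right of \eqref{eq:kottwitz-central-kernel} if and only if $N_G(a)\in(Y\otimes V)^\Gamma$, viewed inside $(\pi_1(G)\otimes V)^\Gamma$.

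\emph{Step 3: the denominator condition (main obstacle).} Step 2 gives invariants in $Y\otimes V$, whereas $\operatorname{im}(j_Z)=(Y\otimes X^*(D_F))^\Gamma$.
\begin{itemize}
\item For $F$ non-Archimedean, $X^*(D_F)=\mathbb Q=V$, so there is nothing to check.
\item For $F=\mathbb C$, $N_G(a)=a\otimes1$ with $a\in\pi_1(G)$. If $N_G(a)\in Y\otimes\mathbb Q$, then $a$ maps to a torsion element of $\pi_1(G/Z^\circ)$. Here I must interpret $\pi_1(G)\otimes X^*(D_F)$ as the paper does: integrally, torsion in $\pi_1(G/Z^\circ)$ may survive. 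I would read the target so that torsion is killed (tensor over $\mathbb Z$ with $X^*(D_F)$ inside the rationalization), and then $N_G(a)\in\pi_1(G)\otimes\mathbb Z$ together with membership in $Y\otimes\mathbb Q$ forces membership in $Y$. This uses that $\pi_1(G)/Y=\pi_1(G/Z^\circ)$, so that $Y$ is saturated modulo torsion.
\item For $F=\mathbb R$, the same argument applies with $\tfrac12\mathbb Z$: $N_G(a)=(a+ca)\otimes\tfrac12$ lies in $\pi_1(G)\otimes\tfrac12\mathbb Z$. If it also lies in $Y\otimes\mathbb Q$, then saturation of $Y$ modulo torsion gives membership in $Y\otimes\tfrac12\mathbb Z$, and $\Gamma$-invariance is automatic.
\end{itemize}
I expect the care with torsion in $\pi_1(G)$ and this saturation argument to be the only delicate point; in the write-up I would make precise that the $N$-targets are taken modulo torsion, consistent with \cite[Sections~11.4--11.7]{Kot14}.
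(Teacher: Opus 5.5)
There is a genuine gap for $F=\mathbb R$ and $F=\mathbb C$; your argument is fine for non-Archimedean $F$, where $X^*(D_F)=\mathbb Q$. In Step~2, the ``if and only if'' replaces the kernel of the integral map into $(\pi_1(G/Z)\otimes X^*(D_F))^\Gamma$ by the kernel of its rationalization. Step~3 then tries to recover integrality from the claim that $Y$ is saturated in $\pi_1(G)$ modulo torsion. That claim is false, because $\pi_1(G)/Y\simeq\pi_1(G/Z^\circ)$ can have torsion.

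For $G=\mathrm{GL}_2$ and $Z=Z(G)$ one has $\pi_1(G)=\mathbb Z$, $i(Y)=2\mathbb Z$ and $\pi_1(G/Z)=\mathbb Z/2\mathbb Z$. Over $\mathbb C$, the element $a=1$ satisfies $N_G(a)=1\otimes 1\in Y\otimes\mathbb Q$, but $N_G(a)\notin\operatorname{im}(j_Z)=2\mathbb Z$. Both sides of the lemma equal $2\mathbb Z$, whereas your rational condition gives all of $\mathbb Z$. Over $\mathbb R$, take $\operatorname{Res}_{\mathbb C/\mathbb R}\mathrm{GL}_2$ and $m=(1,0)$: then $(1+c)m=(1,1)$ lies in $Y\otimes\mathbb Q$ but not in $i(Y^\Gamma)=\{(2k,2k)\}$.

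Reading the targets modulo torsion does not rescue the argument. It changes the statement: in the $\mathrm{GL}_2$ example over $\mathbb C$, the right-hand side becomes $\{a : q_*a \text{ is torsion}\}=\mathbb Z$, while $A(\mathbb C,Z\to G)$ remains $2\mathbb Z$. So the reinterpreted lemma is false, and the torsion in $\pi_1(G/Z)\otimes X^*(D_F)$ must be kept.

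The repair is not to rationalize at all. Your Step~1 already shows that
\[
0\longrightarrow Y\xrightarrow{\,i\,}\pi_1(G)\xrightarrow{\,q_*\,}\pi_1(G/Z)
\]
is exact integrally. To see this, compose the surjection $\pi_1(G)\to\pi_1(G/Z^\circ)$, whose kernel is $Y$, with the injection $\pi_1(G/Z^\circ)\hookrightarrow\pi_1(G/Z)$ from Lemma~\ref{lem:finite-quotient}. Since $X^*(D_F)$ is torsion-free, it is flat over $\mathbb Z$, so you can tensor this sequence with $X^*(D_F)$ itself rather than with $\mathbb Q$. Left exactness of $\Gamma$-invariants then gives
\[
\operatorname{im}(j_Z)=\ker\bigl[(\pi_1(G)\otimes X^*(D_F))^\Gamma\to(\pi_1(G/Z)\otimes X^*(D_F))^\Gamma\bigr].
\]
Your norm compatibility $N_{G/Z}\circ q_*=(q_*\otimes 1)\circ N_G$ from Step~2 then proves the lemma uniformly in $F$, with no denominator or saturation step needed. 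This is exactly the paper's proof.
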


\begin{proof}
The central quotient gives an exact sequence
\[
0\longrightarrow Y\xrightarrow{i} \pi_1(G)
\longrightarrow\pi_1(G/Z).
\]
This follows by choosing a maximal torus, using the
exactness of
$0\to X_*(Z^\circ)\to X_*(T)\to X_*(T/Z)$,
and identifying the coroot lattices under the quotient.
Since $X^*(D_F)$ is torsion-free, tensoring preserves
exactness. Taking invariants identifies
$\operatorname{im}(j_Z)$ with the kernel of the map
induced by $q_*$. Functoriality of the norms proves
the assertion.
\end{proof}

\begin{proposition}\label{prop:kottwitz-local-recovery}
The fiber product in
\eqref{eq:kottwitz-local-fiber-product} is naturally
isomorphic to $A(F,Z\to G)$ by
\begin{equation}\label{eq:kottwitz-local-coordinate}
([y],\nu)\longmapsto[y+\iota(\nu)].
\end{equation}
Its inverse sends $a=[m]$ to
\begin{equation}\label{eq:kottwitz-local-comparison-coordinate}
\bigl([m-\iota(\nu_a)],\nu_a\bigr).
\end{equation}
Consequently, Theorem~\ref{thm:main-bis} gives a
natural bijection
\begin{equation}\label{eq:kottwitz-local-ab}
H^1_{\mathrm{ab}}(\mathcal E^{\mathrm{iso}},Z\to G)
\xrightarrow{\sim}A(F,Z\to G).
\end{equation}
Its composition with abelianization is Kottwitz's map
$\kappa_G$ on $B(F,Z\to G)$.
\end{proposition}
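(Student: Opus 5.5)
We first check that the map \eqref{eq:kottwitz-local-coordinate} is well defined and lands in $A(F,Z\to G)$. Let $([y],\nu)$ lie in the fiber product, with $y\in\tilde\pi_1(G,Z)^{N_{K/F}}$ for some $K$ splitting a maximal torus. By \eqref{eq:kottwitz-local-integrality}, $m\coloneqq y+\iota(\nu)$ lies in $\pi_1(G)$. The class $[y]$ is taken modulo $I_\Gamma\pi_1(G)$, so $[m]\in\pi_1(G)_\Gamma$ is well defined. The transition maps along $K\subseteq K'$ are induced by the identity, so $[m]$ does not depend on the level.

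To see that $[m]\in A(F,Z\to G)$, we compute the norm. Since $\nu$ is $\Gamma$-invariant, $N_{K/F}\iota(\nu)=[K:F]\iota(\nu)$, and $N_{K/F}y=0$. Hence
\[
\sum_{\sigma\in\Gamma_{K/F}}\sigma m\otimes\tfrac1{[K:F]}=\iota(\nu),
\]
viewed in $\pi_1(G)\otimes X^*(D_F)$. This uses the identification of $\iota$ on $Y\otimes\mathbb Q$ with $i\otimes\mathbb Q$ and $\operatorname{Hom}_F(D_F,Z)=(Y\otimes X^*(D_F))^\Gamma$. It follows that $N_G([m])=j_Z(\nu)$, so $[m]\in A(F,Z\to G)$ and $\nu_{[m]}=\nu$. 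The archimedean cases are handled the same way, with $K=\mathbb C$ and the norm $1+c$, or with $K=F$. Throughout, $X^*(D_F)$ supplies exactly the allowed denominators.

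Next we check that \eqref{eq:kottwitz-local-comparison-coordinate} is a two-sided inverse. Given $a=[m]\in A(F,Z\to G)$, the element $\nu_a$ is uniquely determined because $j_Z$ is injective. Set $y=m-\iota(\nu_a)\in\tilde\pi_1(G,Z)$. Reversing the computation gives $N_{K/F}y=[K:F]\bigl(j_Z(\nu_a)-\iota(\nu_a)\bigr)=0$, so $y$ lies in the norm kernel. Moreover, $y$ satisfies \eqref{eq:kottwitz-local-integrality} by construction, so $([y],\nu_a)$ is in the fiber product.

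Well-definedness needs two points. First, changing $m$ by $I_\Gamma\pi_1(G)$ changes $y$ only by $I_\Gamma\pi_1(G)$. Second, $N_G$ is well defined on coinvariants. This requires $N_G$ to be independent of $K$ for non-Archimedean $F$, which is given in the text. The two composites are the identity by direct substitution, using $\nu_{[y+\iota(\nu)]}=\nu$ from the previous step.

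Naturality in $(Z\to G)$ follows because $\iota$, $N_G$ and $j_Z$ are functorial. Composing with Theorem~\ref{thm:main-bis} then yields \eqref{eq:kottwitz-local-ab}.

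The main obstacle is the last assertion: the composite with $\operatorname{ab}$ is Kottwitz's $\kappa_G$. We would reduce this to the compatibility of the standard comparison $f$ with the maps $\kappa$, as proved in \cite[Section~3.6]{KT23} (see also \cite[Section~3.3]{Kal18b}). That compatibility says that for $[x,\nu]\in B(F,Z\to G)$ one has $\kappa_G[x]=\kappa_{\mathrm{rig}}(f^*[x])+\iota(\nu)$ in $\pi_1(G)_\Gamma$, with the sign matching \eqref{eq:kottwitz-local-band-map}.

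We would verify this first for tori. There both sides are additive and functorial, and the claim reduces to the behavior on the classes of $\mathcal E^{\mathrm{iso}}$ pulled back along $\phi$. We would then pass to general $G$ using functoriality of $\kappa_G$ and $\kappa_{\mathrm{rig}}$ along an elliptic maximal torus $T\to G$, together with the fact that every basic class comes from such a torus. The delicate point is the sign convention: $\xi_{\mathrm{rig}}$ corresponds to $-1$, which is why $-\iota(\nu)$ appears in \eqref{eq:kottwitz-local-band-map}. We would check it on $G=\mathbb G_m$, $Z=G$.
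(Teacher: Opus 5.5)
Your strategy is the paper's: identify the fiber product with $A(F,Z\to G)$ by a norm computation, compose with Theorem~\ref{thm:main-bis}, and get the $\kappa_G$ statement from the torus comparison plus a reduction to maximal tori. The forward direction is fine. So is the inverse over $\mathbb R$ and $\mathbb C$: over $\mathbb R$ the condition $N_G([m])=j_Z(\nu_a)$ is integral, because $\pi_1(G)\otimes\frac12\mathbb Z\simeq\pi_1(G)$, and with $\nu_a=\lambda/2$ one gets $(1+c)y=(1+c)m-i(\lambda)=0$ exactly.

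Over a non-Archimedean field, however, the inverse step fails as written. There $X^*(D_F)=\mathbb Q$, so $N_G([m])=j_Z(\nu_a)$ is only an equality in $\pi_1(G)\otimes\mathbb Q$, which forgets the torsion of $\pi_1(G)$. Reversing the computation therefore shows only that $N_{K/F}y=N_{K/F}m-[K:F]\iota(\nu_a)$ is a torsion element of $\tilde\pi_1(G,Z)$, not that it vanishes. For a counterexample, take split $G=\mathrm{PGL}_2$, $Z=1$, $K=F$, and $m$ the generator of $\pi_1(G)=\mathbb Z/2$. Then $[m]\in A(F,Z\to G)$ and $y=m$, but $N_{K/F}y=m\neq 0$.

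The repair is what the paper does. Use the identification of the first factor with $\bigl(\tilde\pi_1(G,Z)/I_\Gamma\pi_1(G)\bigr)_{\mathrm{tors}}$ from Theorem~\ref{thm:main-bis}; for that group, vanishing of the rational norm is exactly the membership criterion. Equivalently, pass to a finite Galois $K'\supseteq K$ with $[K':K]$ divisible by the order of $N_{K/F}y$, as in the proof of Theorem~\ref{thm:local-rigid-formula}. Then $N_{K'/F}y=[K':K]N_{K/F}y=0$ holds in the colimit.

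In the $\kappa_G$ part, do not insist on an elliptic maximal torus. The proposition includes $F=\mathbb R$, where groups such as $\mathrm{GL}_3$ have no elliptic maximal torus, so ``every basic class comes from such a torus'' is not available there. What is needed, and what the paper uses, is a torus that depends on the class. Conjugate the adjoint cocycle of $(x,\nu)$ into some maximal $F$-torus of $G_{\mathrm{ad}}$ and lift the conjugating element to $G(\bar F)$; this does not change the central $\nu$. Then $(x,\nu)$ lies in $Z^1(\mathcal E^{\mathrm{iso}},Z\to T)$ for the preimage torus $T$, and naturality of both sides reduces the claim to tori. Finally, a sign check on $\mathbb G_m$ does not by itself establish the torus case. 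That case has to rest on the comparison results \cite[Proposition~3.2]{Kal18b} and \cite[Propositions~3.13.5--3.13.6]{KT23}, as in the paper.
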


\begin{proof}
By \eqref{eq:kottwitz-local-integrality}, the element
$y+\iota(\nu)$ lies in $\pi_1(G)$.
Changing $y$ by an element of $I_\Gamma\pi_1(G)$
does not change its class in $\pi_1(G)_\Gamma$.

Suppose first that $F$ is non-Archimedean.
Theorem~\ref{thm:main-bis} identifies the first factor
with
\[
\left(
\frac{\tilde\pi_1(G,Z)}{I_\Gamma \pi_1(G)}
\right)_{\mathrm{tors}}.
\]
Since
$\tilde\pi_1(G,Z)\otimes\mathbb Q
=\pi_1(G)\otimes\mathbb Q$,
the class $[y]$ belongs to this torsion subgroup
precisely when
\[
\frac{1}{[K:F]}
\sum_{\sigma\in\Gamma_{K/F}}\sigma(y\otimes1)=0
\quad\text{in }(\pi_1(G)\otimes\mathbb Q)^\Gamma,
\]
where $K/F$ is a finite Galois extension splitting
a maximal $F$-torus of $G$.
It follows that
\[
N_G([y+\iota(\nu)])=j_Z(\nu).
\]
Conversely, $N_G([m])=j_Z(\nu_a)$ implies that
$[m-\iota(\nu_a)]$ belongs to the torsion subgroup.

For $F=\mathbb R$, the first factor in
\eqref{eq:kottwitz-local-fiber-product} is
\[
\frac{
\ker(1+c:\tilde\pi_1(G,Z)\to\tilde\pi_1(G,Z))
}{
(c-1)\pi_1(G)
}.
\]
Write $\nu=\lambda/2$ with $\lambda\in Y^\Gamma$.
Under the isomorphism
$\pi_1(G)\otimes\frac12\mathbb Z\simeq\pi_1(G)$
given by $m\otimes\frac a2\mapsto am$, the condition
$N_G([m])=j_Z(\nu)$ becomes $(1+c)m=i(\lambda)$.
For $m=y+\iota(\nu)$, we have
\[
(1+c)y=(1+c)m-i(\lambda).
\]
Thus $y$ has zero norm exactly when
$N_G([m])=j_Z(\nu)$.
For $F=\mathbb C$, the first factor is zero and
$\operatorname{Hom}_{\mathbb C}(D_F,Z)=Y$.
The map is then $\nu\mapsto i(\nu)$.
Thus \eqref{eq:kottwitz-local-coordinate} and
\eqref{eq:kottwitz-local-comparison-coordinate} are
inverse in all cases. Combining this with Theorem~\ref{thm:main-bis}
proves \eqref{eq:kottwitz-local-ab}.

For a torus, the identification with $\kappa_G$ is
the comparison calculation in
\cite[Proposition 3.2]{Kal18b} and
\cite[Propositions 3.13.5--3.13.6]{KT23}.
For general $G$, a central cocycle $(x,\nu)$ has an
adjoint image in $Z^1(F,G_{\mathrm{ad}})$.
Reduction as in
\cite[Corollary 3.7(2)]{Kal16} with trivial central
subgroup allows this adjoint cocycle to be conjugated
into a maximal $F$-torus of $G_{\mathrm{ad}}$.
Lifting the conjugating element to $G(\bar F)$ shows
that $(x,\nu)$ comes from the inverse image maximal
torus of $G$.
Naturality therefore reduces the identification to
the torus case. Over $\mathbb C$, this reduction is
immediate from the triviality of ordinary Galois
cohomology.
\end{proof}

\begin{theorem}\label{thm:kottwitz-local-explicit}
If $F$ is non-Archimedean, there is a natural bijection
\begin{equation}\label{eq:kottwitz-padic-central-formula}
B(F,Z\to G)
\xrightarrow{\sim}
\ker\left[
\pi_1(G)_\Gamma\longrightarrow
\pi_1(G/Z)_{\Gamma,\mathrm{free}}
\right],
\end{equation}
where the map is induced by $G\to G/Z$ and the subscript
$\mathrm{free}$ denotes the quotient by the torsion subgroup.

For $F=\mathbb R$, the abelianized target is
\begin{equation}\label{eq:kottwitz-real-central-target}
A(\mathbb R,Z\to G)
=\{[m]\in \pi_1(G)/(c-1)\pi_1(G)\mid(1+c)m\in i(Y^\Gamma)\}.
\end{equation}
The full set is given by
\begin{equation}\label{eq:kottwitz-real-full}
B(\mathbb R,Z\to G)\simeq{}
A(\mathbb R,Z\to G)
\underset{H^1_{\mathrm{ab}}
(\mathcal E^{\mathrm{rig}},Z\to G)}{\times}
H^1(\mathcal E^{\mathrm{rig}},Z\to G).
\end{equation}
The map from $A(\mathbb R,Z\to G)$ to the base sends
$a=[m]$ to
$\kappa_{\mathrm{rig}}^{-1}([m-\iota(\nu_a)])$, and the other map is abelianization.

For $F=\mathbb C$, restriction gives
\[
B(\mathbb C,Z\to G)\xrightarrow{\sim}X_*(Z^\circ).
\]
\end{theorem}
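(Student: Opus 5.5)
The plan is to treat the three cases separately, with Proposition~\ref{prop:kottwitz-local-recovery} and Lemma~\ref{lem:kottwitz-central-target} doing most of the work.

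\emph{Non-Archimedean case.} Corollary~\ref{cor:abelianization-bijective} says abelianization is bijective. Combining this with \eqref{eq:kottwitz-local-ab} gives a natural bijection $B(F,Z\to G)\simeq A(F,Z\to G)$. By Lemma~\ref{lem:kottwitz-central-target}, $A(F,Z\to G)$ is the kernel of $N_{G/Z}\circ q_*$. So it remains to identify
\[
\ker\bigl(N_{G/Z}\colon\pi_1(G/Z)_\Gamma\to(\pi_1(G/Z)\otimes\mathbb Q)^\Gamma\bigr)
\]
with the torsion subgroup of $\pi_1(G/Z)_\Gamma$; here $\pi_1(G/Z)$ is the fundamental group of the connected reductive group $G/Z$, with $X^*(D_F)=\mathbb Q$.

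\emph{Key step.} After tensoring with $\mathbb Q$, the norm map
\[
[m]\mapsto\frac{1}{[K:F]}\sum_{\sigma\in\Gamma_{K/F}}\sigma m
\]
becomes the standard isomorphism $(M\otimes\mathbb Q)_\Gamma\simeq(M\otimes\mathbb Q)^\Gamma$, which holds for any finitely generated $\Gamma$-module $M$ with finite action. Hence the kernel of $M_\Gamma\to(M\otimes\mathbb Q)^\Gamma$ is the kernel of $M_\Gamma\to M_\Gamma\otimes\mathbb Q$, namely $(M_\Gamma)_{\mathrm{tors}}$. This uses that coinvariants commute with $\otimes\mathbb Q$. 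Applying it to $M=\pi_1(G/Z)$ identifies the kernel with that of $\pi_1(G)_\Gamma\to\pi_1(G/Z)_{\Gamma,\mathrm{free}}$. Naturality follows from the naturality of each step.

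\emph{Real case.} Formula \eqref{eq:kottwitz-real-central-target} is the $F=\mathbb R$ computation already made in the proof of Proposition~\ref{prop:kottwitz-local-recovery}. There, $X^*(D_F)=\tfrac12\mathbb Z$ and $\pi_1(G)\otimes\tfrac12\mathbb Z$ is identified with $\pi_1(G)$, so the condition $N_G([m])\in\operatorname{im}(j_Z)$ reads $(1+c)m\in i(Y^\Gamma)$. For \eqref{eq:kottwitz-real-full}, I would apply \eqref{eq:local-full-recovery} of Theorem~\ref{thm:main-bis} to $\mathcal E=\mathcal E^{\mathrm{iso}}$ with the standard comparison $f$. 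Then I substitute the identification \eqref{eq:kottwitz-local-ab}. The map to the base is obtained by composing the inverse coordinate \eqref{eq:kottwitz-local-comparison-coordinate}, which projects to the first factor $[m-\iota(\nu_a)]$, with $\kappa_{\mathrm{rig}}^{-1}$ from Theorem~\ref{thm:local-rigid-formula}. The point to check is that the composite $H^1_{\mathrm{ab}}(\mathcal E^{\mathrm{iso}})\to H^1_{\mathrm{ab}}(\mathcal E^{\mathrm{rig}})$ given by $f^*$ is exactly the first-coordinate projection in \eqref{eq:main}. This holds by the construction of \eqref{eq:main}.

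\emph{Complex case.} Theorem~\ref{thm:main-bis} gives $H^1\simeq\operatorname{Hom}_{\mathbb C}(D_F,Z)$ with $D_F=\mathbb G_m$. By \eqref{eq:kottwitz-connected-center}, this equals $X_*(Z^\circ)$.

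I expect the main obstacle to be the rational norm identification in the non-Archimedean case. One must check that Kottwitz's normalized norm is compatible with the coinvariant/invariant isomorphism, independently of $K$. One must also check that torsion in $M_\Gamma$ is precisely the kernel after tensoring with $\mathbb Q$. Both are routine but carry the content of the statement.
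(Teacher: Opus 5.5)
Your proposal is correct and follows essentially the same route as the paper. You use bijectivity of abelianization together with Proposition~\ref{prop:kottwitz-local-recovery} and Lemma~\ref{lem:kottwitz-central-target}, identify the kernel of the rational norm on $\pi_1(G/Z)_\Gamma$ with its torsion subgroup via the averaging isomorphism $(M\otimes\mathbb Q)_\Gamma\simeq(M\otimes\mathbb Q)^\Gamma$ (which spells out the paper's one-line claim), and read off the real and complex cases from \eqref{eq:local-full-recovery} and Theorem~\ref{thm:main-bis}.
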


\begin{proof}
By Theorem~\ref{thm:main-bis}, abelianization is bijective
over non-Archimedean local fields and over $\mathbb C$.
Over a non-Archimedean local field, the rational norm on
$\pi_1(G/Z)_\Gamma$ has kernel equal to its torsion
subgroup. Lemma~\ref{lem:kottwitz-central-target}
therefore gives \eqref{eq:kottwitz-padic-central-formula}.
The real target follows from the computation in
Proposition~\ref{prop:kottwitz-local-recovery}, and
\eqref{eq:kottwitz-real-full} is exactly the
specialization of \eqref{eq:local-full-recovery}.
The complex case follows from
Theorem~\ref{thm:main-bis} and
$\operatorname{Hom}_{\mathbb C}(D_F,Z)=Y$.
\end{proof}

\begin{remark}
For non-Archimedean $F$, taking $Z=Z(G)$ recovers
\[
B(F,G)_{\mathrm{bas}}\xrightarrow{\sim}\pi_1(G)_\Gamma,
\]
since $\pi_1(G/Z(G))$ is finite.
For every finite $Z$, including $Z=1$, the result recovers
\[
H^1(F,G)\xrightarrow{\sim}
\pi_1(G)_{\Gamma,\mathrm{tors}}.
\]
For the standard band map $\phi$, pullback is independent
of the lift $f$. Indeed, every algebraic homomorphism
$D_F\to Z$ factors through a finite-level split torus,
whose first Galois cohomology vanishes by Hilbert's theorem 90.
The assertion follows from
Lemma~\ref{lem:comparison-dependence}(2).
The parametrization \eqref{eq:kottwitz-local-ab} is
canonical, since it is induced by $\kappa_G$.
\end{remark}

\section{Formulas over number fields}\label{sec:number-field}

Throughout this section, let $F$ be a number field.
Let $\mathcal E$ be a Galois gerbe over $F$ banded by
a pro-algebraic $F$-group $D$ of multiplicative type.
We recall Kaletha's global rigid gerbe using the notation of
\cite[Sections~2.4--2.5 and 3.3]{KT23}.

\subsection{Kaletha’s global rigid gerbe}

Let $V$ be the set of places of $F$, and choose a set
$\dot V$ of lifts to $\bar F$ satisfying
\cite[Condition~3.3.1]{KT23}.
For a finite Galois extension $E/F$ and a finite subset
$S\subseteq V$, let $S_E$ be the places of $E$ above $S$
and put $\dot S_E=\{\dot v|_E:v\in S\}$.
We call $(E,S)$ admissible if $(E/F,S,\dot S_E)$ satisfies
\cite[Conditions~2.4.1]{KT23}.
An exhaustive tower of such pairs exists by
\cite[Lemma~3.3.2]{KT23}.

Let $P_{\dot V}$ be Kaletha's profinite $F$-group of
multiplicative type, denoted $P^{\mathrm{rig}}_{\dot V}$
in \cite[Sections~2.4--2.5 and 3.3]{KT23}.
Its character group $X^*(P_{\dot V})$ is torsion.
For a real place $v$, let $c_{\dot v}$ be the nontrivial
element of the decomposition subgroup $\Gamma_{\dot v}$.
Its action on
$\operatorname{Hom}_{\mathbb Z}(X^*(Z),\mathbb Q/\mathbb Z)$ is
$(c_{\dot v}\lambda)(\chi)=\lambda(c_{\dot v}^{-1}\chi)$.

\begin{lemma}\label{lem:rig-nonfinite-global}
Let $Z$ be an $F$-group of multiplicative type and of finite type.
Every $F$-homomorphism $P_{\dot V}\to Z$ factors through $Z[n]$
for some positive integer $n$.
There is a natural identification
\begin{equation}\label{eq:P-nonfinite}
\operatorname{Hom}_F(P_{\dot V},Z)
\simeq
\left\{(\lambda_{\dot v})\in
\bigoplus_{\dot v\in\dot V}L_{\dot v}(Z)
\;\middle|\;
\sum_{\dot v}\lambda_{\dot v}=0\right\},
\end{equation}
where
\begin{align*}
L_{\dot v}(Z)=
\begin{cases}
\operatorname{Hom}_{\mathbb Z}(X^*(Z),\mathbb Q/\mathbb Z),
 &\text{if } v\text{ is finite},\\
\{\lambda\in\operatorname{Hom}_{\mathbb Z}(X^*(Z),\mathbb Q/\mathbb Z)
  :\lambda+c_{\dot v}\lambda=0\},
 &\text{if } v\text{ is real},\\[3pt]
0, &\text{if } v\text{ is complex}.
\end{cases}
\end{align*}
\end{lemma}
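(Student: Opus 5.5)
The plan mirrors the proof of Lemma~\ref{lem:u-nonfinite}. We use the structure of $X^*(P_{\dot V})$ from \cite[Sections~3.3--3.5]{Kal18a} (in the notation of \cite[Section~3.3]{KT23}). We identify
\[
\operatorname{Hom}_F(P_{\dot V},Z)=\operatorname{Hom}_\Gamma\bigl(X^*(Z),X^*(P_{\dot V})\bigr).
\]

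First we dispose of the factorization claim. The group $X^*(Z)$ is finitely generated and $X^*(P_{\dot V})$ is torsion, so for any $\nu\colon P_{\dot V}\to Z$ the image of $\nu^*$ is finitely generated and torsion, hence finite. Take $n$ annihilating it. Then $\nu^*$ factors through $X^*(Z)/nX^*(Z)=X^*(Z[n])$, so $\nu$ factors through $Z[n]$, exactly as in the last paragraph of Lemma~\ref{lem:abelianization-colimit}.

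Next we reduce the identification \eqref{eq:P-nonfinite} to finite level. By the first step,
\[
\operatorname{Hom}_F(P_{\dot V},Z)=\varinjlim_n\operatorname{Hom}_F(P_{\dot V},Z[n]).
\]
The right-hand side of \eqref{eq:P-nonfinite} is also the union of its $n$-torsion pieces, because every $\lambda\in\operatorname{Hom}_{\mathbb Z}(X^*(Z),\mathbb Q/\mathbb Z)$ has finite image. So it suffices to treat finite $A=Z[n]$ compatibly in $n$. For finite $A$ we use Kaletha's description of $P_{\dot V}$ as $\varprojlim_{(E,S,n)}P_{E,\dot S_E,n}$. The character group $X^*(P_{E,\dot S_E,n})$ is $\mathbb Z/n[\Gamma_{E/F}\times S_E]_0$-type functions, and $\operatorname{Hom}_F(P_{E,\dot S_E,n},A)$ is identified with $\operatorname{Hom}(\mu_n,A)[\dot S_E]_0$-style tuples via \cite[Section~3.3]{Kal18a}, \cite[Sections~2.4--2.5]{KT23}. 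Passing to the limit gives the finitely supported tuples $(\lambda_{\dot v})$ indexed by $\dot V$ with sum zero.

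Concretely, the identification is evaluation at the chosen lifts. A $\Gamma$-map $X^*(Z)\to X^*(P_{\dot V})$ sends $\chi$ to a function $h_\chi$ on $\Gamma\times V$, and we set $\lambda_{\dot v}(\chi)=h_\chi(\sigma_{\dot v},v)$ at the lift. The inverse is induced from the decomposition groups, as in the inverse map of Lemma~\ref{lem:u-nonfinite}.

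The local conditions come from the stabilizers.
\begin{itemize}
\item At a finite place there is no constraint.
\item At a real place the component is induced from $\Gamma_{\dot v}$ with the local real condition, which forces $\lambda+c_{\dot v}\lambda=0$, matching \eqref{eq:local-rigid-band}.
\item At a complex place the component vanishes.
\end{itemize}
The sum-zero condition is the image of the global augmentation condition in $X^*(P_{\dot V})$.

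The main obstacle is this last step: matching Kaletha's precise presentation of $X^*(P_{\dot V})$, including the archimedean sum conditions and the transition maps of the tower, with the stated coordinates. There are two points to check. First, the sum-zero constraint and the real-place constraint must be compatible in the limit. Second, the identification must be independent of the level $(E,S)$. I would first try to quote \cite[Corollary~3.2.9 and Section~3.4]{Kal18a}, which give $\operatorname{Hom}_F(P_{\dot V},A)$ for finite $A$ as $\operatorname{Hom}(\mu_n,A)$-valued functions on $\dot V$ of the required form. Then I would check directly that the real-place constraint is the one in the definition of $L_{\dot v}(Z)$ under $\operatorname{Hom}(\mu_n,A)\simeq\operatorname{Hom}(X^*(A),\frac1n\mathbb Z/\mathbb Z)$.
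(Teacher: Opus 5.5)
Your proposal is correct and follows the paper's proof. The finite-image argument (finitely generated $X^*(Z)$, torsion $X^*(P_{\dot V})$) gives the factorization through $Z[n]$. The identification then comes from Kaletha's finite-level description applied to $Z[n]$ and a direct limit over $n$, using that finitely supported families have a common annihilating integer. The only difference is the citation: the paper takes the finite-level identification directly from \cite[Lemma~3.3.6]{Kal18a} rather than Corollary~3.2.9, so the explicit evaluation-at-lifts bookkeeping you sketch is not needed beyond that reference.
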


\begin{proof}
Since $X^*(Z)$ is finitely generated and
$X^*(P_{\dot V})$ is torsion, every homomorphism
$X^*(Z)\to X^*(P_{\dot V})$ has finite image.
Thus every $F$-homomorphism $P_{\dot V}\to Z$
factors through some $Z[n]$.

Apply \cite[Lemma~3.3.6]{Kal18a} to $Z[n]$ and
pass to the direct limit over $n$.
This gives \eqref{eq:P-nonfinite}, since every family
on its right-hand side has finite support and its
values are annihilated by a common positive integer.
\end{proof}

Let
$\xi^{\mathrm{rig}}_{\dot V}\in H^2(F,P_{\dot V})$
be the distinguished class of
\cite[Definition~3.5.4]{Kal18a}, and choose a
representing gerbe
\[
1\longrightarrow P_{\dot V}(\bar F)
\longrightarrow\mathcal E^{\mathrm{rig}}_{\dot V}
\longrightarrow\Gamma\longrightarrow1.
\]
For fixed $\dot V$, its cohomology sets are independent
of the representing extension by
Remark~\ref{rmk:Kaletha-gerbe_surj}(1).

By Lemmas~\ref{lem:rig-nonfinite-global}
and~\ref{lem:abelianization-colimit}, the natural maps give
\begin{equation}\label{eq:global-rigid-colimit}
\varinjlim_{A\subseteq Z,\,A\text{ finite}/F}
H^1(\mathcal E^{\mathrm{rig}}_{\dot V},A\to G)
\xrightarrow{\sim}
H^1(\mathcal E^{\mathrm{rig}}_{\dot V},Z\to G).
\end{equation}
The same assertion holds for $H^1_{\mathrm{ab}}$,
and both systems have injective transition maps.

For each $v\in V$, the chosen lift $\dot v$ gives a band map
$u_v\to(P_{\dot V})_{F_v}$ compatible with the local and global
distinguished classes, and therefore yields a localization map
\[
\operatorname{loc}_v\colon
H^1(\mathcal E^{\mathrm{rig}}_{\dot V},Z\to G)
\longrightarrow
H^1(\mathcal E^{\mathrm{rig}}_v,Z_{F_v}\to G_{F_v});
\]
see \cite[Section 3.6]{Kal18a}.
Under \eqref{eq:P-nonfinite} and the local identification
of Lemma~\ref{lem:u-nonfinite}, restriction along
$u_v\to(P_{\dot V})_{F_v}$ sends
$(\lambda_{\dot w})_{\dot w\in\dot V}$
to $\lambda_{\dot v}$.

We now recall Kaletha's global parametrization of
abelianized rigid cohomology and extend it to arbitrary
algebraic central subgroups using
Lemma~\ref{lem:abelianization-colimit}.

Fix a maximal $F$-torus $T\subseteq G$.
For a finite $F$-subgroup $A\subseteq Z$, let
$M=\pi_1(G)$ and $M'=\pi_1(G/A)$, and use
Lemma \ref{lem:finite-quotient} to regard $M$ as a subgroup of $M'$.
For a finite Galois extension $E/F$ splitting $T$, write
\[
M[S_E]_0
=\left\{(m_w)\in\bigoplus_{w\in S_E}M:
\sum_{w\in S_E}m_w=0\right\},
\]
and define $M'[S_E]_0$ similarly.
Both carry the diagonal $\Gamma_{E/F}$-action.
Set
\[
M'[S_E,\dot S_E]_0
=\left\{(m'_w)\in M'[S_E]_0:
m'_w\in M\text{ for }w\notin\dot S_E\right\}.
\]
The notation $M'[S_E,\dot S_E]_0^{N_{E/F}}$ means the intersection
of this subgroup with the kernel of
$N_{E/F}=\sum_{\sigma\in\Gamma_{E/F}}\sigma$
on $M'[S_E]_0$; the superscript denotes a norm kernel, not invariants.

\begin{theorem}[{\cite[Theorem~3.8.1]{Kal18a}}]
\label{prop:global-rigid-formula}
There is a natural bijection of pointed sets
\begin{equation}\label{eq:global-rigid-formula}
H^1_{\mathrm{ab}}(\mathcal E^{\mathrm{rig}}_{\dot V},Z\to G)
\xrightarrow{\sim}
\varinjlim_{\substack{A\subseteq Z\\A\text{ finite}/F}}
\varinjlim_{(E,S)}
\frac{M'[S_E,\dot S_E]_0^{N_{E/F}}}
     {I_{\Gamma_{E/F}}\bigl(M[S_E]_0\bigr)}.
\end{equation}
The inner limit is taken over a cofinal tower of admissible
pairs $(E,S)$ with $E$ splitting $T$.
Its transition maps are those of
\cite[Lemma~3.7.1 and Section~3.8]{Kal18a}.

Under this bijection, restriction to $P_{\dot V}$ sends a class
represented by $(m'_w)$ to the family whose coordinate at
$\dot v$ is the image of $m'_{\dot v|_E}$ in
\[
M'/M
\simeq\operatorname{Hom}_{\mathbb Z}(X^*(A),\mathbb Q/\mathbb Z)
\hookrightarrow
\operatorname{Hom}_{\mathbb Z}(X^*(Z),\mathbb Q/\mathbb Z)
\]
if $v\in S$, and is zero otherwise.
\end{theorem}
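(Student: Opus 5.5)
The plan is to reduce the statement to Kaletha's finite-level theorem \cite[Theorem~3.8.1]{Kal18a} and then pass to the direct limit over finite subgroups. Lemma~\ref{lem:rig-nonfinite-global} shows that $X^*(P_{\dot V})$ is torsion. The abelianized version of \eqref{eq:global-rigid-colimit}, which comes from Lemma~\ref{lem:abelianization-colimit}, then identifies $H^1_{\mathrm{ab}}(\mathcal E^{\mathrm{rig}}_{\dot V},Z\to G)$ with $\varinjlim_A H^1_{\mathrm{ab}}(\mathcal E^{\mathrm{rig}}_{\dot V},A\to G)$, and the transition maps of this system are injective. It therefore suffices to do two things:
\begin{itemize}
\item construct, for each finite $F$-subgroup $A\subseteq Z$, a bijection
\[
H^1_{\mathrm{ab}}(\mathcal E^{\mathrm{rig}}_{\dot V},A\to G)\simeq\varinjlim_{(E,S)} M'[S_E,\dot S_E]_0^{N_{E/F}}/I_{\Gamma_{E/F}}(M[S_E]_0);
\]
\item check that these bijections are compatible with the transition maps in $A$.
\end{itemize}

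For a fixed finite $A$, the first item is exactly Kaletha's Theorem~3.8.1 applied to the finite central subgroup $A$ of $G$. His statement is phrased for $Z$ finite, using the same cofinal tower of admissible pairs and the transition maps of \cite[Lemma~3.7.1]{Kal18a}. I would check two points here:
\begin{itemize}
\item that his $H^1_{\mathrm{ab}}$, defined via the equivalence relation in \cite[Section~3.8]{Kal18a}, agrees with the relation \eqref{def:gerbe-abelianization};
\item that independence of the representing extension holds, which is Remark~\ref{rmk:Kaletha-gerbe_surj}(1).
\end{itemize}

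The main obstacle is compatibility in $A$. For $A\subseteq A'$, the inclusion $Z^1(\mathcal E^{\mathrm{rig}}_{\dot V},A\to G)\subseteq Z^1(\mathcal E^{\mathrm{rig}}_{\dot V},A'\to G)$ has to correspond to the map induced by $\pi_1(G/A)\hookrightarrow\pi_1(G/A')$ on the coefficient groups. This map fixes $M=\pi_1(G)$, so it sends $M'[S_E,\dot S_E]_0$ and norm kernels into the corresponding groups for $A'$. I would first try to derive this from Kaletha's functoriality of Theorem~3.8.1 in the pair $(A\to G)$, applied to the morphism of pairs $(A\to G)\to(A'\to G)$ given by the identity on $G$. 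If his naturality is stated only for morphisms of pairs, this is immediate. Otherwise I would go back to his construction. There the class of a cocycle is computed by first conjugating it into $Z^1(\mathcal E^{\mathrm{rig}}_{\dot V},A\to T)$ and then applying the Tate--Nakayama isomorphism for the torus $T/A$. Enlarging $A$ to $A'$ changes only the target torus $T/A\to T/A'$, which is functorial in the torus. Since filtered colimits are exact and commute with each other, the double limit is well defined. This gives \eqref{eq:global-rigid-formula}, and the transition maps on the right are injective, matching the left side.

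Finally, the description of restriction to $P_{\dot V}$. At finite level, Kaletha's theorem states this compatibility: a class $(m'_w)$ restricts to the element of $\operatorname{Hom}_F(P_{\dot V},A)$ given, via \cite[Lemma~3.3.6]{Kal18a}, by the images of $m'_{\dot v|_E}$ in $M'/M\simeq\operatorname{Hom}_{\mathbb Z}(X^*(A),\mathbb Q/\mathbb Z)$ from Lemma~\ref{lem:finite-quotient}. Coordinates at places outside $S$ vanish because the coordinate sum is zero and $m'_w\in M$ for $w\notin\dot S_E$. Composing with $A\hookrightarrow Z$ corresponds to the inclusion of character duals $\operatorname{Hom}(X^*(A),\mathbb Q/\mathbb Z)\hookrightarrow\operatorname{Hom}(X^*(Z),\mathbb Q/\mathbb Z)$. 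This is compatible with the limit in \eqref{eq:P-nonfinite} and with Proposition~\ref{prop:colimit-quotient}, and passing to the colimit gives the stated formula.
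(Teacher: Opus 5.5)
Your proposal follows the paper's proof. You apply \cite[Theorem~3.8.1]{Kal18a} to each finite $A\subseteq Z$, where the paper takes the restriction compatibility from \cite[Theorem~3.7.3]{Kal18a}, and pass to the limit over $A$ via Lemma~\ref{lem:abelianization-colimit}. Your check of compatibility in $A$ spells out what the paper leaves implicit.

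The one ingredient you omit is \cite[(3.30)]{Kal18a}, which the paper cites for independence of the fixed torus $T$. You need it because, in your fallback argument, a global cocycle can in general be conjugated only into \emph{some} maximal torus, not into the fixed $T$.
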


\begin{proof}
For finite $A$, the parametrization is
\cite[Theorem~3.8.1]{Kal18a}, with compatibility
with restriction supplied by
\cite[Theorem~3.7.3]{Kal18a}.
Its independence of $T$ follows from
\cite[(3.30)]{Kal18a}.
Taking the direct limit over finite $F$-subgroups
$A\subseteq Z$ and applying
Lemma~\ref{lem:abelianization-colimit}
proves the assertion.
\end{proof}

\begin{remark}
If $F$ is totally imaginary, abelianization is bijective
by Corollary~\ref{cor:abelianization-bijective}.
Thus \eqref{eq:global-rigid-formula} parametrizes
$H^1(\mathcal E^{\mathrm{rig}}_{\dot V},Z\to G)$
in this case.
\end{remark}

\subsection{Comparison with the rigid gerbe}

\begin{proposition}\label{prop:global-formal}
Suppose that there exists a morphism
$f\colon\mathcal E^{\mathrm{rig}}_{\dot V}\to\mathcal E$
over $\Gamma$, inducing an $F$-homomorphism
$\phi\colon P_{\dot V}\to D$ on the bands.
Then $f$ induces bijections of pointed sets,
natural in $(Z\to G)$,
\begin{equation}\label{eq:global-formal}
H^1(\mathcal E,Z\to G)
\xrightarrow{\sim}
H^1(\mathcal E^{\mathrm{rig}}_{\dot V},Z\to G)
\underset{\operatorname{Hom}_F(P_{\dot V},Z)}{\times}
\operatorname{Hom}_F(D,Z)
\end{equation}
and
\begin{equation}\label{eq:global-formal-ab}
H^1_{\mathrm{ab}}(\mathcal E,Z\to G)
\xrightarrow{\sim}
H^1_{\mathrm{ab}}(\mathcal E^{\mathrm{rig}}_{\dot V},Z\to G)
\underset{\operatorname{Hom}_F(P_{\dot V},Z)}{\times}
\operatorname{Hom}_F(D,Z).
\end{equation}
In both formulas, the first map to the base is restriction
to $P_{\dot V}$, and the second is $\nu\mapsto\nu\circ\phi$.
The bijections send $[x]$ and $\operatorname{ab}[x]$ to
\[
([x\circ f],x|_D)
\quad\text{and}\quad
(\operatorname{ab}[x\circ f],x|_D),
\]
respectively. For nonfinite $Z$, the rigid factors are the
direct limits over finite $F$-subgroups $A\subseteq Z$.
Their dependence on $f$ is described by
Lemma~\ref{lem:comparison-dependence}.
In particular, for fixed $\phi$, they are independent
of $f$ whenever $H^1(F,D)=0$.
If $D$ is of finite type, a comparison $f$ with a suitable
band map $\phi$ exists by
Corollary~\ref{cor:comparison-exist-finite-type}.
\end{proposition}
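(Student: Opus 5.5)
The proposition is a direct specialization of the general comparison formalism of Section~\ref{sec:gerbes} to $\mathcal E'=\mathcal E^{\mathrm{rig}}_{\dot V}$ and $D'=P_{\dot V}$. I would proceed in four steps.

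\textbf{Step 1: the two bijections.} Applying Fact~\ref{fact:cartesian} to $f$ gives \eqref{eq:global-formal} directly, with the stated formula $[x]\mapsto([x\circ f],x|_D)$. Applying the first part of Proposition~\ref{prop:comparison-formal-ab} to $f$ gives \eqref{eq:global-formal-ab}, with $\operatorname{ab}[x]\mapsto(\operatorname{ab}[x\circ f],x|_D)$. Both sources already assert naturality in $(Z\to G)$ and identify the maps to the base as restriction to $P_{\dot V}$ and $\nu\mapsto\nu\circ\phi$. Nothing about number fields enters here; the only input is that $f$ is a morphism over $\Gamma$.

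\textbf{Step 2: the direct-limit description for nonfinite $Z$.} Here $Z$ is an algebraic central subgroup, so it is of finite type. Lemma~\ref{lem:rig-nonfinite-global} shows that every $F$-homomorphism $P_{\dot V}\to Z$ factors through some $Z[n]$. So the hypothesis of Lemma~\ref{lem:abelianization-colimit} holds for $\mathcal E^{\mathrm{rig}}_{\dot V}$. This gives \eqref{eq:global-rigid-colimit} and its abelianized analogue, with injective transition maps. Substituting these into the first factors yields the stated limit description. One point needs checking: the factor $\operatorname{Hom}_F(D,Z)$ is \emph{not} replaced by a limit, since $\nu$ itself need not factor through a finite subgroup. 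This causes no problem, because only the rigid factor is rewritten and the base map $\operatorname{Hom}_F(P_{\dot V},Z)$ receives it compatibly.

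\textbf{Step 3: dependence on $f$.} Lemma~\ref{lem:comparison-dependence}(2) gives $(a_\alpha\circ f)^*=f^*\circ([\alpha]\cdot)$. The action of $[\alpha]$ fixes $\nu$ and is compatible with the abelianization relation, since it is a pullback along an automorphism over $\Gamma$. This compatibility follows from the argument in the proof of Proposition~\ref{prop:comparison-formal-ab}. Hence both bijections change only through the action of $H^1(F,D)$ on the first coordinate. When $H^1(F,D)=0$ this action is trivial, which gives the independence statement for fixed $\phi$ by Lemma~\ref{lem:comparison-dependence}(3).

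\textbf{Step 4: existence of $f$.} When $D$ is of finite type, a comparison exists by Corollary~\ref{cor:comparison-exist-finite-type}. That corollary applies by Remark~\ref{rmk:Kaletha-gerbe_surj}(2).

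I expect the only mildly delicate point to be Step 3, namely that the $H^1(F,D)$-action descends to $H^1_{\mathrm{ab}}$. The plan is to note that $x$ and $x\circ a_\alpha$ have the same adjoint image, since $\nu(\alpha)$ is central. They therefore define the same inner form $G_x$, and twisting commutes with the action. Everything else is formal bookkeeping.
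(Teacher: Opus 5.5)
Your proposal is correct and follows the paper's proof. The two bijections come from Fact~\ref{fact:cartesian} and Proposition~\ref{prop:comparison-formal-ab}, the direct-limit description from Lemmas~\ref{lem:rig-nonfinite-global} and~\ref{lem:abelianization-colimit}, and existence from Corollary~\ref{cor:comparison-exist-finite-type}. Your Step~3 check is correct and fills in a point the paper leaves implicit: the $H^1(F,D)$-action of Lemma~\ref{lem:comparison-dependence} descends to $H^1_{\mathrm{ab}}$, because $x$ and $x\circ a_\alpha$ have the same adjoint image and twisting commutes with pullback along $a_\alpha$.
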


\begin{proof}
The bijections \eqref{eq:global-formal} and
\eqref{eq:global-formal-ab} follow from
Fact~\ref{fact:cartesian} and
Proposition~\ref{prop:comparison-formal-ab}, respectively.
The direct limit assertions follow from
Lemmas~\ref{lem:rig-nonfinite-global}
and~\ref{lem:abelianization-colimit}.
The finite-type existence assertion is
Corollary~\ref{cor:comparison-exist-finite-type}.
\end{proof}

\begin{theorem}\label{cor:global-abelianized-formula}
Suppose there is a morphism
$f\colon\mathcal E^{\mathrm{rig}}_{\dot V}\to\mathcal E$
over $\Gamma$ inducing a band map $\phi\colon P_{\dot V}\to D$.
Then there is a natural
bijection
\begin{equation}\label{eq:global-abelianized-formula}
\begin{aligned}
H^1_{\mathrm{ab}}(\mathcal E,Z\to G)
\xrightarrow{\sim}
\left(
\varinjlim_{\substack{A\subseteq Z\\A\text{ finite}/F}}
\varinjlim_{(E,S)}
\frac{M'[S_E,\dot S_E]_0^{N_{E/F}}}
     {I_{\Gamma_{E/F}}\bigl(M[S_E]_0\bigr)}
\right)
\underset{\operatorname{Hom}_F(P_{\dot V},Z)}{\times}
\operatorname{Hom}_F(D,Z).
\end{aligned}
\end{equation}
Here $M=\pi_1(G)$, $M'=\pi_1(G/A)$, and the indexing systems,
coefficient modules, norm kernels, and transition maps are
those of Theorem \ref{prop:global-rigid-formula}.
The map from the first factor to the base is the restriction
map described in that theorem; the second is
$\nu\mapsto\nu\circ\phi$.

If $D$ is of finite type, a comparison $f$ with a suitable
band map $\phi$ exists by
Corollary~\ref{cor:comparison-exist-finite-type}.
\end{theorem}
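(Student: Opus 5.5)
The plan is to assemble the bijection from two earlier results: the formal comparison \eqref{eq:global-formal-ab} of Proposition~\ref{prop:global-formal}, and Kaletha's global parametrization of Theorem~\ref{prop:global-rigid-formula}.

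\textbf{Step 1.} Apply \eqref{eq:global-formal-ab} to the given $f$. This yields a natural bijection
\[
H^1_{\mathrm{ab}}(\mathcal E,Z\to G)\xrightarrow{\sim}
H^1_{\mathrm{ab}}(\mathcal E^{\mathrm{rig}}_{\dot V},Z\to G)
\underset{\operatorname{Hom}_F(P_{\dot V},Z)}{\times}\operatorname{Hom}_F(D,Z),
\qquad \operatorname{ab}[x]\mapsto(\operatorname{ab}[x\circ f],x|_D).
\]
Here the structure maps to the base are restriction to $P_{\dot V}$ and $\nu\mapsto\nu\circ\phi$.

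\textbf{Step 2.} Replace the first factor by the double direct limit, using the bijection \eqref{eq:global-rigid-formula}. A bijection of one factor induces a bijection of fiber products provided it commutes with the structure maps to the common base. So the one point to check is compatibility with restriction. Theorem~\ref{prop:global-rigid-formula} says that restriction to $P_{\dot V}$ corresponds to sending $(m'_w)$ to the family of images of $m'_{\dot v|_E}$ in $M'/M\hookrightarrow\operatorname{Hom}_{\mathbb Z}(X^*(Z),\mathbb Q/\mathbb Z)$, read through the identification \eqref{eq:P-nonfinite} of Lemma~\ref{lem:rig-nonfinite-global}. We take this description as the definition of the first structure map in the target.

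\textbf{Step 3.} Handle a nonfinite $Z$. We must ensure that both the restriction map and the identification \eqref{eq:P-nonfinite} are compatible with the transition maps over finite $A\subseteq Z$. For finite $A$ this compatibility is Kaletha's \cite[Theorem~3.7.3]{Kal18a}. Lemma~\ref{lem:abelianization-colimit} provides the colimit description with injective transitions. Every homomorphism $P_{\dot V}\to Z$ factors through some $Z[n]$ (Lemma~\ref{lem:rig-nonfinite-global}), so the colimit of the bases $\operatorname{Hom}_F(P_{\dot V},A)$ is $\operatorname{Hom}_F(P_{\dot V},Z)$. Hence the restriction maps pass to the limit.

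\textbf{Remaining assertions.} Naturality in $(Z\to G)$ holds because each ingredient is natural. The existence of $f$ when $D$ is of finite type is Corollary~\ref{cor:comparison-exist-finite-type}.

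\textbf{Main obstacle.} The only delicate point is the bookkeeping in Step~3. The identification of restriction, stated at finite level $A$ in Kaletha's theorem, must be checked to be compatible with enlarging $A$. In particular, one must verify that $M'/M\simeq\operatorname{Hom}(X^*(A),\mathbb Q/\mathbb Z)$ from Lemma~\ref{lem:finite-quotient} is compatible with inclusions $A\subseteq A'$, which that lemma asserts. Everything else is formal.
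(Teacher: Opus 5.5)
Your proposal is correct and is essentially the paper's proof: you substitute Kaletha's parametrization \eqref{eq:global-rigid-formula} into the formal comparison \eqref{eq:global-formal-ab}, using the restriction compatibility in Theorem~\ref{prop:global-rigid-formula} to match the maps to $\operatorname{Hom}_F(P_{\dot V},Z)$. Your Step~3 bookkeeping over finite $A\subseteq Z$ is also correct, but the paper has already absorbed it into Theorem~\ref{prop:global-rigid-formula} and Proposition~\ref{prop:global-formal} via Lemmas~\ref{lem:rig-nonfinite-global} and~\ref{lem:abelianization-colimit}, so its proof here reduces to that one-line substitution.
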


\begin{proof}
Substitute \eqref{eq:global-rigid-formula} into
\eqref{eq:global-formal-ab}.
The compatibility with restriction in
Theorem \ref{prop:global-rigid-formula} identifies the maps to the base, hence this substitution gives the desired fiber product.
\end{proof}

\subsection{Recovery from real localizations}

We begin with a fact about ordinary Galois cohomology.

\begin{lemma}\label{lem:real-localization-image}
For every connected reductive $F$-group $H$, localization
induces a bijection of pointed sets
\[
\operatorname{im}\bigl[
H^1(F,H^{\mathrm{sc}})\to H^1(F,H)
\bigr]
\xrightarrow{\sim}
\prod_{v\ \mathrm{real}}
\operatorname{im}\bigl[
H^1(F_v,H^{\mathrm{sc}})\to H^1(F_v,H)
\bigr].
\]
\end{lemma}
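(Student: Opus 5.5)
Consider the commutative square
\[
\begin{tikzcd}
H^1(F,H^{\mathrm{sc}}) \ar[r,"\rho_*"] \ar[d]
&
H^1(F,H) \ar[d]
\\
\prod_{v\ \mathrm{real}} H^1(F_v,H^{\mathrm{sc}}) \ar[r,"\prod_v\rho_{v,*}"]
&
\prod_{v\ \mathrm{real}} H^1(F_v,H).
\end{tikzcd}
\]
The localization map $\operatorname{im}(\rho_*)\to\prod_v\operatorname{im}(\rho_{v,*})$ is well defined because this square commutes. We prove surjectivity and injectivity separately.

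\emph{Surjectivity.} The Hasse principle of Kneser--Harder--Chernousov says that
\[
H^1(F,H^{\mathrm{sc}})\longrightarrow\prod_{v\ \mathrm{real}}H^1(F_v,H^{\mathrm{sc}})
\]
is a bijection. Indeed, the localizations at non-Archimedean places vanish by Kneser--Bruhat--Tits, and those at complex places vanish trivially. So every family $(\rho_{v,*}b_v)_v$ lifts: take $b\in H^1(F,H^{\mathrm{sc}})$ with $\operatorname{loc}_v b=b_v$ for every real $v$, and then $\rho_*b$ localizes to the given family.

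\emph{Injectivity.} This is the main step. Let $b,b'\in H^1(F,H^{\mathrm{sc}})$ with $\operatorname{loc}_v\rho_*b=\operatorname{loc}_v\rho_*b'$ for all real $v$; we want $\rho_*b=\rho_*b'$.

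\emph{Reduction to $b'=1$.} Twist by a cocycle representing $b'$. Let $H'$ be the inner form of $H$ twisted by $\rho_*b'$. Its simply connected cover $H'^{\mathrm{sc}}$ is the twist of $H^{\mathrm{sc}}$ by $b'$. The twisting bijections $H^1(F,H^{\mathrm{sc}})\simeq H^1(F,H'^{\mathrm{sc}})$ and $H^1(F,H)\simeq H^1(F,H')$ are compatible with $\rho$ and with localization, and they send $b'$ and $\rho_*b'$ to the neutral classes. So it suffices to show: if $\rho_*b$ is locally trivial at every real place, then $\rho_*b$ is trivial.

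\emph{Modifying $b$ to be locally trivial.} At a real place $v$, $\rho_{v,*}\operatorname{loc}_v b=1$ means that $\operatorname{loc}_v b$ lies in the fiber of $H^1(F_v,H^{\mathrm{sc}})\to H^1(F_v,H)$ over the neutral class. Use the central isogeny
\[
1\to C\to H^{\mathrm{sc}}\times Z(H)^\circ\to H\to 1
\]
with $C$ finite central. The fiber over $1$ of $H^1(F_v,H^{\mathrm{sc}}\times Z(H)^\circ)\to H^1(F_v,H)$ is the image of the coboundary map
\[
\delta\colon H(F_v)\to H^1(F_v,C).
\]
I would rather use the crossed-module version: $H^1(F_v,H^{\mathrm{sc}})\to H^1(F_v,H)$ has fibers given by orbits of $H^{\mathrm{ab}}$-type quotients. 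The key input is that $H(F)$ is dense in $\prod_{v\ \mathrm{real}}H(F_v)$, by weak approximation for connected groups at archimedean places, and that $\delta$ is locally constant. Hence the elements $h_v\in H(F_v)$ killing $\operatorname{loc}_v b$ can be approximated simultaneously by one global $h\in H(F)$. Acting by $h$ replaces $b$ by some $b''$ with the same image $\rho_*b''=\rho_*b$ and with $\operatorname{loc}_v b''=1$ at all real $v$. The Hasse principle for $H^{\mathrm{sc}}$ then gives $b''=1$, so $\rho_*b=1$.

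\emph{Making the action precise.} The fibers of $\rho_*$ through a class $b$ are the orbits of $H(F)$ acting on $H^1(F,H^{\mathrm{sc}})$ through the crossed module $H^{\mathrm{sc}}\to H$. Concretely, $h$ sends $a$ to $\sigma\mapsto \tilde h^{-1}a_\sigma{}^\sigma\tilde h$, where $\tilde h$ is a lift of $h$ to $H^{\mathrm{sc}}(\bar F)$ modulo the center. The same description holds over each $F_v$, compatibly with localization. The orbit map $H(F_v)\to H^1(F_v,H^{\mathrm{sc}})$ is locally constant in the real topology: orbits of $H(F_v)$ acting on the finite set $H^1(F_v,H^{\mathrm{sc}})$ have open stabilizers. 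So weak approximation at the finitely many real places produces the global $h$.

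The main technical obstacle is justifying this crossed-module orbit description for $H^{\mathrm{sc}}\to H$ (not injective, not surjective), rather than for a short exact sequence. Stated this way, the argument also handles non-neutral $b'$ through the twisting reduction.
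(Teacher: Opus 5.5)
Your surjectivity argument and the twisting reduction to $b'=1$ are fine. The injectivity step, however, relies on a mechanism that does not exist. The formula $a\mapsto(\sigma\mapsto\tilde h^{-1}a_\sigma{}^\sigma\tilde h)$ with $\tilde h\in H^{\mathrm{sc}}(\bar F)$ is literally the definition of a cocycle cohomologous to $a$, so it acts trivially on $H^1(F,H^{\mathrm{sc}})$. The genuine action of $h\in H(F)$, through the $F$-automorphism $\operatorname{Int}(h)$ of $H^{\mathrm{sc}}$, does preserve the fibers of $\rho_*$, but it fixes the neutral class. So no $h_v\in H(F_v)$ can carry $\operatorname{loc}_v b\neq1$ to $1$, and the fiber of $\rho_{v,*}$ over $1$ is not an $H(F_v)$-orbit.

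Your isogeny version has the same confusion. For $1\to C\to\tilde H\to H\to1$, the fiber over $1$ of $H^1(F_v,\tilde H)\to H^1(F_v,H)$ is the image of $H^1(F_v,C)$. By contrast, $\delta(H(F_v))$ is exactly the kernel of $H^1(F_v,C)\to H^1(F_v,\tilde H)$, i.e.\ the classes that become trivial. The neutral fiber of $\rho_*$ is parametrized by a group of toric nature (the degree-$0$ hypercohomology of $H^{\mathrm{sc}}\to H$), not by $H(F)$. Weak approximation for $H$ is therefore the wrong input.

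Your strategy can be repaired as follows.
\begin{enumerate}
\item Take a $z$-extension $1\to Z'\to H'\to H\to1$ with $Z'$ an induced torus and $(H')^{\mathrm{der}}=H^{\mathrm{sc}}$. Since $H^1(F,Z')$ and $H^1(F_v,Z')$ vanish, $H^1(F,H')\to H^1(F,H)$ and its local analogues are injective. So you may replace $H$ by $H'$, and then twist as you did.
\item Let $D'=H'/H^{\mathrm{sc}}$, a torus. The neutral fiber of $H^1(F,H^{\mathrm{sc}})\to H^1(F,H')$ is $\delta(D'(F))$, and likewise over each $F_v$. Each $\delta_v$ is constant on cosets of the open subgroup $\operatorname{im}(H'(F_v)\to D'(F_v))$.
\item Real approximation for the torus $D'$ gives $d\in D'(F)$ with $\operatorname{loc}_v\delta(d)=\operatorname{loc}_v b$ for every real $v$.
\item The Hasse principle for $H^{\mathrm{sc}}$ then yields $b=\delta(d)$, hence $\rho_*b=1$.
\end{enumerate}
This is essentially the content of the cartesian square for ordinary abelianization and archimedean localization. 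The paper simply quotes that square, together with exactness of $H^1(F,H^{\mathrm{sc}})\to H^1(F,H)\to H^1_{\mathrm{ab}}(F,H)$, and takes the fiber over the neutral abelianized class.
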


\begin{proof}
The ordinary abelianization sequence
\[
H^1(F,H^{\mathrm{sc}})\longrightarrow H^1(F,H)
\longrightarrow H^1_{\mathrm{ab}}(F,H)
\]
is exact; see \cite[(5.5)]{BK26}.
Thus the image on the left is the fiber over the neutral
abelianized class, and the same assertion holds over each
completion.

By \cite[Theorem 5.5.1(2)]{BK26}, the square formed by ordinary
abelianization and localization at the archimedean places
is cartesian. Taking its fiber over the neutral global
abelianized class gives the claimed bijection.
Complex places contribute singleton sets and may be omitted.
\end{proof}

For each real place $v$, choose an embedding
$\bar F\hookrightarrow\overline{F_v}$ inducing $\dot v$.
Identify $\Gamma_v=\operatorname{Gal}(\overline{F_v}/F_v)$
with the corresponding decomposition subgroup of $\Gamma$.
Let $\mathcal E_v$ be the gerbe obtained by restricting
$\mathcal E$ to $\Gamma_v$ and pushing out its band along
$D(\bar F)\to D(\overline{F_v})$.

A central algebraic cocycle $(x,\nu)$ localizes to
$(x_v,\nu_{F_v})$, where
\[
x_v([d,w])=\nu_{F_v}(d)x_w.
\]
Here $[d,w]$ denotes the class in the pushout of
$d\in D(\overline{F_v})$ and
$w\in\pi^{-1}(\Gamma_v)$.
This construction induces
\[
\operatorname{loc}_v\colon
H^1(\mathcal E,Z\to G)
\longrightarrow H^1(\mathcal E_v,Z\to G).
\]
We suppress base change to $F_v$ in the notation for
$G$ and $Z$.
Localization commutes with twisting and with the maps
from the simply connected coverings, hence it also induces
a map on abelianized cohomology.

\begin{proposition}\label{prop:global-local-cartesian}
For every Galois gerbe $\mathcal E$ over $F$, abelianization and localization give a cartesian square
\[
\begin{tikzcd}[column sep = 1.7cm]
    H^1(\mathcal E,Z\to G)
    \ar[r,"\operatorname{ab}"] \ar[d,"\operatorname{loc}_\infty"]
    &H^1_{\mathrm{ab}}
  (\mathcal E,Z\to G) \ar[d,"\operatorname{loc}_\infty"] \\
    \prod_{v\ \mathrm{real}}
    H^1(\mathcal E_v,Z\to G) 
    \ar[r,"\prod_v \operatorname{ab}_v"]
    &\prod_{v\ \mathrm{real}}
    H^1_{\mathrm{ab}}(\mathcal E_v,Z\to G) .
\end{tikzcd}
\]
\end{proposition}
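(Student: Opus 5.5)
To prove that the square is cartesian, we compare fibers of the two horizontal maps. We have to show that the natural map
\[
H^1(\mathcal E,Z\to G)\longrightarrow H^1_{\mathrm{ab}}(\mathcal E,Z\to G)\times_{\prod_v H^1_{\mathrm{ab}}(\mathcal E_v,Z\to G)}\prod_{v\ \mathrm{real}}H^1(\mathcal E_v,Z\to G)
\]
is bijective. Since $\operatorname{ab}$ is surjective by definition, it suffices to work fiberwise. Fix a class $[x,\nu]$ and show that $\operatorname{loc}_\infty$ restricts to a bijection from $\operatorname{ab}^{-1}(\operatorname{ab}[x])$ onto $\prod_v \operatorname{ab}_v^{-1}(\operatorname{ab}_v\operatorname{loc}_v[x])$.

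By \eqref{eq:abelianization-fiber}, the global fiber is $t_x$ applied to $\operatorname{im}[H^1(F,G_x^{\mathrm{sc}})\to H^1(F,G_x)]$. The local fiber at a real place $v$ is likewise $t_{x_v}$ applied to $\operatorname{im}[H^1(F_v,G_x^{\mathrm{sc}})\to H^1(F_v,G_x)]$. Here we need that the inner form attached to the localized cocycle $x_v$ is $(G_x)_{F_v}$. This holds because $x_v([d,w])=\nu(d)x_w$ and $\nu$ is central, so the adjoint cocycle of $x_v$ is the restriction to $\Gamma_v$ of that of $x$.

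The key compatibility is that localization commutes with twisting: $\operatorname{loc}_v\circ t_x=t_{x_v}\circ\operatorname{res}_v$ on $H^1(F,G_x)$. Indeed,
\[
\bigl((a\circ\pi)\cdot x\bigr)_v([d,w])=\nu(d)\,a_{\pi(w)}x_w=a_{\pi(w)}\,\nu(d)x_w,
\]
using centrality of $\nu(d)$. The same identity holds on the simply connected covers. Since $t_x$ and $t_{x_v}$ are bijections onto the Newton fibers, the restricted map between the ab-fibers is identified with the localization map
\[
\operatorname{im}[H^1(F,G_x^{\mathrm{sc}})\to H^1(F,G_x)]\longrightarrow\prod_{v\ \mathrm{real}}\operatorname{im}[H^1(F_v,G_x^{\mathrm{sc}})\to H^1(F_v,G_x)].
\]
Lemma~\ref{lem:real-localization-image}, applied to $H=G_x$, shows that this map is bijective.

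It remains to check that the fiberwise statement gives the global cartesian property. Take an element $(\alpha,(y_v))$ of the fiber product. Choose $[x]$ with $\operatorname{ab}[x]=\alpha$. Then $\operatorname{ab}_v(y_v)=\operatorname{ab}_v(\operatorname{loc}_v[x])$ for each real $v$, so $(y_v)$ lies in the product of local ab-fibers. The fiberwise bijection then gives exactly one preimage in $\operatorname{ab}^{-1}(\alpha)$, which proves both surjectivity and injectivity. Naturality with respect to the choice of representative $x$ follows from the independence of the twisting bijections under change of representative, as noted after \eqref{def:gerbe-abelianization}.

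I expect the main obstacle to be the bookkeeping that localization is well defined on cohomology classes and on abelianized classes. This means checking that the pushout cocycle $x_v$ is continuous and algebraic, and that the local twisting bijection is the one used to define $\operatorname{ab}_v$. Complex places can be ignored, since they contribute singleton ab-fibers.

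The totally imaginary corollary is then immediate: with no real places, the bottom row is a map between singletons, so $\operatorname{ab}$ is bijective. This also agrees with Corollary~\ref{cor:abelianization-bijective}.
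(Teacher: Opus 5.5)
Your proposal is correct and follows essentially the same route as the paper. You identify each abelianization fiber with $\operatorname{im}\bigl[H^1(F,G_x^{\mathrm{sc}})\to H^1(F,G_x)\bigr]$ via twisting, check that $G_{x_v}=(G_x)_{F_v}$ and that twisting commutes with localization, and apply Lemma~\ref{lem:real-localization-image} to $G_x$; your explicit centrality computation and the final assembly of fiberwise bijections into the cartesian property only spell out details the paper leaves implicit.
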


\begin{proof}
Fix
$c\in H^1_{\mathrm{ab}}(\mathcal E,Z\to G)$
and choose a cocycle $x$ with $\operatorname{ab}[x]=c$.
Let $x_v$ be its localization.

By \eqref{eq:abelianization-fiber}, twisting identifies
$\operatorname{ab}^{-1}(c)$ with
$\operatorname{im}\bigl[
H^1(F,G_x^{\mathrm{sc}})\longrightarrow H^1(F,G_x)
\bigr]$.
Similarly, the fiber of $\operatorname{ab}_v$ over
$\operatorname{loc}_v(c)$ identifies with the corresponding
image for $G_{x_v}$ over $F_v$.

The adjoint cocycle defining $G_{x_v}$ is the restriction
to $\Gamma_v$ of the cocycle defining $G_x$.
Hence $G_{x_v}=(G_x)_{F_v}$.
Moreover, twisting commutes with localization
\[
\operatorname{loc}_v\bigl(t_x([a])\bigr)
=
t_{x_v}\bigl(\operatorname{loc}_v([a])\bigr)
\qquad ([a]\in H^1(F,G_x)).
\]
Thus localization on the abelianization fibers is
identified with ordinary Galois localization.
Lemma \ref{lem:real-localization-image}, applied to $G_x$,
therefore gives
\[
\operatorname{ab}^{-1}(c)
\xrightarrow{\sim}
\prod_{v\ \mathrm{real}}
\operatorname{ab}_v^{-1}(\operatorname{loc}_v(c)).
\]
This proves that the square is cartesian.
\end{proof}

For the rigid gerbe, we use Kaletha's localization maps
\cite[Section 3.6]{Kal18a}, induced by morphisms
\[
\ell_v\colon\mathcal E_v^{\mathrm{rig}}
\longrightarrow(\mathcal E_{\dot V}^{\mathrm{rig}})_v.
\]
Here $(\mathcal E_{\dot V}^{\mathrm{rig}})_v$ denotes the
localization of the global gerbe, whereas
$\mathcal E_v^{\mathrm{rig}}$ denotes the local rigid gerbe.

Given a comparison
$f\colon\mathcal E_{\dot V}^{\mathrm{rig}}\to\mathcal E$,
define
\[
f_v\colon\mathcal E_v^{\mathrm{rig}}
\xrightarrow{\ell_v}(\mathcal E_{\dot V}^{\mathrm{rig}})_v
\xrightarrow{f_{F_v}}\mathcal E_v,
\]
where $f_{F_v}$ is induced by localization of $f$.

\begin{corollary}\label{prop:global-real}
If there is a morphism
$f\colon\mathcal E_{\dot V}^{\mathrm{rig}}\to\mathcal E$ over $\Gamma$ inducing an algebraic $F$-homomorphism on bands, then there is a cartesian diagram
\[
\begin{tikzcd}[column sep = 1.7cm]
    H^1(\mathcal E,Z\to G)
    \ar[r,"\operatorname{ab}"] \ar[d,"\operatorname{loc}_\infty^{\operatorname{rig}}"]
    &H^1_{\mathrm{ab}}
  (\mathcal E,Z\to G) \ar[d,"\operatorname{loc}_\infty^{\operatorname{rig}}"] \\
    \prod_{v\ \mathrm{real}}
    H^1(\mathcal E_v^{\mathrm{rig}},Z\to G) 
    \ar[r,"\prod_v \operatorname{ab}_v"]
    &\prod_{v\ \mathrm{real}}
    H^1_{\mathrm{ab}}(\mathcal E_v^{\mathrm{rig}},Z\to G) 
\end{tikzcd}.
\]
The vertical maps are the products of
$\operatorname{loc}^{\mathrm{rig}}_v
\coloneqq f_v^*\circ\operatorname{loc}_v$.
Their dependence on the chosen comparison $f$ is
described by Lemma~\ref{lem:comparison-dependence}.

If $D$ is of finite type, a comparison $f$ with a suitable
band map $\phi$ exists by
Corollary~\ref{cor:comparison-exist-finite-type}.
\end{corollary}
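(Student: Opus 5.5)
The plan is to obtain the corollary by pasting the cartesian square of Proposition~\ref{prop:global-local-cartesian} with a second cartesian square. That second square compares, at each real place $v$, the local cohomology of $\mathcal E_v$ with that of the local rigid gerbe $\mathcal E_v^{\mathrm{rig}}$ via $f_v^*$. Since $\operatorname{loc}^{\mathrm{rig}}_v=f_v^*\circ\operatorname{loc}_v$ on both full and abelianized cohomology, the diagram in the statement is the horizontal composite of two squares. The left square is the one of Proposition~\ref{prop:global-local-cartesian}. The right square has top row $\prod_v\operatorname{ab}_v$ on $\prod_v H^1(\mathcal E_v,Z\to G)$ and bottom row $\prod_v\operatorname{ab}_v$ on $\prod_v H^1(\mathcal E_v^{\mathrm{rig}},Z\to G)$, with vertical maps $\prod_v f_v^*$. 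By the pasting lemma for cartesian squares of sets, it suffices to show the right square is cartesian, and this may be checked one real place at a time.

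For a single real place $v$, $f_v\colon\mathcal E_v^{\mathrm{rig}}\to\mathcal E_v$ is a morphism of Galois gerbes over $\Gamma_v$ inducing an algebraic band map $u_v\to D_{F_v}$. This band map is the composite of $u_v\to(P_{\dot V})_{F_v}$ with $\phi_{F_v}$. Proposition~\ref{prop:comparison-formal-ab}, specifically \eqref{eq:comparison-full-recovery} applied over $F_v$ with $\mathcal E'=\mathcal E_v^{\mathrm{rig}}$, states exactly that
\[
H^1(\mathcal E_v,Z\to G)\xrightarrow{\sim}H^1_{\mathrm{ab}}(\mathcal E_v,Z\to G)\underset{H^1_{\mathrm{ab}}(\mathcal E_v^{\mathrm{rig}},Z\to G)}{\times}H^1(\mathcal E_v^{\mathrm{rig}},Z\to G),
\]
which is the cartesian property of the local square. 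Products of cartesian squares are cartesian, so the right square is cartesian. Pasting with Proposition~\ref{prop:global-local-cartesian} then gives the claimed cartesian diagram.

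Before pasting, I must check two compatibilities. First, $f_v$ is indeed a morphism of gerbes over $\Gamma_v$. This holds because $\ell_v$ is one by \cite[Section 3.6]{Kal18a}, and the localization $f_{F_v}$ of $f$ is continuous and compatible with pushout of bands along $D(\bar F)\to D(\overline{F_v})$ and $P_{\dot V}(\bar F)\to P_{\dot V}(\overline{F_v})$. Second, pullback along $f_v$ commutes with abelianization, so the right square commutes; this was shown in the proof of Proposition~\ref{prop:comparison-formal-ab}. I expect the main care point to be confirming that localization of $f$ is well defined on the pushout gerbes. The approach is to define $f_{F_v}([p,w])=[\phi(p),f(w)]$ and verify it respects the pushout relations, using that $f$ restricts to $\phi$ on the band.

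The dependence on $f$ follows by applying Lemma~\ref{lem:comparison-dependence} to $f$ and to the localized comparisons. Replacing $f$ by $a_\alpha\circ f$ replaces $f_v$ by $a_{\alpha_v}\circ f_v$ with $\alpha_v$ the restriction of $\alpha$ to $\Gamma_v$. The existence statement for $D$ of finite type is Corollary~\ref{cor:comparison-exist-finite-type}.
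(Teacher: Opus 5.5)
Your proposal is correct and is the paper's argument: apply \eqref{eq:comparison-full-recovery} to each $f_v\colon\mathcal E_v^{\mathrm{rig}}\to\mathcal E_v$, take the product over the real places, and paste the resulting cartesian square with the square of Proposition~\ref{prop:global-local-cartesian}, so that the composite vertical maps are $\operatorname{loc}^{\mathrm{rig}}_v=f_v^*\circ\operatorname{loc}_v$. One small point of orientation: with $\operatorname{ab}$ drawn horizontally as in the statement, the two squares are stacked vertically rather than side by side, but this does not matter since being cartesian is symmetric.
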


\begin{proof}
For each real place $v$, apply
Proposition~\ref{prop:comparison-formal-ab},
specifically \eqref{eq:comparison-full-recovery}, to
$f_v\colon\mathcal E_v^{\mathrm{rig}}\to\mathcal E_v$.
This gives a cartesian square relating abelianization
for $\mathcal E_v$ and $\mathcal E_v^{\mathrm{rig}}$.

Taking the product of these squares over the real places
and composing with the cartesian square of
Proposition~\ref{prop:global-local-cartesian}
gives the desired diagram.
The composite vertical maps are
$\operatorname{loc}_\infty^{\mathrm{rig}}$.
\end{proof}

\begin{corollary}\label{cor:global-totally-imaginary}
Suppose that $F$ is totally imaginary.
For every Galois gerbe $\mathcal E$ over $F$, abelianization
is a natural bijection
\[
\operatorname{ab}\colon H^1(\mathcal E,Z\to G)
\xrightarrow{\sim}
H^1_{\mathrm{ab}}(\mathcal E,Z\to G).
\]
If there is a morphism
$f\colon\mathcal E_{\dot V}^{\mathrm{rig}}\to\mathcal E$
over $\Gamma$, inducing a band map $\phi\colon P_{\dot V}\to D$,
then \eqref{eq:global-abelianized-formula} holds with
$H^1(\mathcal E,Z\to G)$ on the left.
In particular, this formula applies whenever $D$ is of finite type,
after choosing a comparison supplied by
Corollary~\ref{cor:comparison-exist-finite-type}.
\end{corollary}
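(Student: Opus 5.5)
The plan is to read off the first assertion from Proposition~\ref{prop:global-local-cartesian}, or more directly from Corollary~\ref{cor:abelianization-bijective}, and then to substitute the result into Theorem~\ref{cor:global-abelianized-formula}.

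\emph{Step 1: abelianization is bijective.} Since $F$ is totally imaginary, the index set of real places is empty. The bottom row of the cartesian square in Proposition~\ref{prop:global-local-cartesian} is therefore the map between two one-point sets (empty products). A cartesian square whose bottom map is a bijection has a bijective top map, so $\operatorname{ab}$ is bijective. Equivalently, I could appeal to Corollary~\ref{cor:abelianization-bijective}. Each twisted form $G_x^{\mathrm{sc}}$ is semisimple and simply connected, so the Kneser--Harder--Chernousov Hasse principle gives
\[
H^1(F,G_x^{\mathrm{sc}})\hookrightarrow\prod_{v\ \mathrm{real}}H^1(F_v,G_x^{\mathrm{sc}}),
\]
and the right-hand side is trivial. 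The fibers described in \eqref{eq:abelianization-fiber} are then singletons. Naturality in $(Z\to G)$ is automatic, because $\operatorname{ab}$ is a quotient map by a relation that is functorial in $(Z\to G)$, as noted in Section~\ref{subsec:abelianization}.

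\emph{Step 2: the formula.} Given a comparison $f\colon\mathcal E_{\dot V}^{\mathrm{rig}}\to\mathcal E$, Theorem~\ref{cor:global-abelianized-formula} gives the bijection \eqref{eq:global-abelianized-formula} with source $H^1_{\mathrm{ab}}(\mathcal E,Z\to G)$. Precomposing with the bijection $\operatorname{ab}$ from Step 1 replaces the source by $H^1(\mathcal E,Z\to G)$. The resulting map is $[x]\mapsto(\operatorname{ab}[x\circ f],x|_D)$ followed by Kaletha's parametrization. Both pieces are natural in $(Z\to G)$, so the composite is natural as well.

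\emph{Step 3: finite type.} If $D$ is of finite type, Corollary~\ref{cor:comparison-exist-finite-type} supplies a comparison $f$. Its band map factors through a finite subgroup of $D$, so the hypothesis of Step 2 is met.

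The only point needing care is Step 1, specifically that the vanishing of $H^1(F,G_x^{\mathrm{sc}})$ holds for \emph{every} twist $x$ and not only for $G$ itself. This is fine because each $G_x$ is an inner form of $G$ over $F$, hence again connected reductive, so its simply connected derived cover is again semisimple simply connected. Everything else is formal composition of results already established.
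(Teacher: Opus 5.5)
Your proposal is correct and matches the paper's proof: the cartesian square of Proposition~\ref{prop:global-local-cartesian}, with both real-place products reduced to singletons, forces $\operatorname{ab}$ to be bijective, and the rest follows by substituting into Theorem~\ref{cor:global-abelianized-formula} and invoking Corollary~\ref{cor:comparison-exist-finite-type}. Your alternative route through Corollary~\ref{cor:abelianization-bijective} and the Hasse principle is also valid; the paper states that corollary for totally imaginary fields as well.
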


\begin{proof}
There are no real places, hence both products in the square of Proposition~\ref{prop:global-local-cartesian}
are singleton sets, which implies that $\operatorname{ab}$ is bijective.
The remaining assertions follow from
Theorem~\ref{cor:global-abelianized-formula}
and Corollary~\ref{cor:comparison-exist-finite-type}.
\end{proof}

\subsection{Specialization to the Kottwitz gerbe}
\label{subsec:global-kottwitz}

Let $\mathcal E^{\mathrm{iso}}$ be Kottwitz's global gerbe,
with band $D_F$ and class $\xi^{\mathrm{iso}}$.
Its character module is
\[
X^*(D_F)=\varinjlim_{E/F}\mathbb Z[V_E]_0,
\]
where $E/F$ runs over finite Galois extensions and $V_E$
is the set of places of $E$. For $E\subseteq K$, the
transition map is the restriction to the augmentation
kernels of
\[
[w]\longmapsto
\sum_{w'\mid w}[K_{w'}:E_w]\,[w'];
\]
see \cite[Sections 2.4--2.5 and 3.3]{KT23}.

Write
\[
B(F,Z\to G)=H^1(\mathcal E^{\mathrm{iso}},Z\to G),
\qquad Y=X_*(Z^\circ).
\]
Since $D_F$ is a protorus, every algebraic homomorphism
$D_F\to Z$ factors through $Z^\circ$. Thus
\begin{equation}\label{eq:kottwitz-global-band-hom}
\operatorname{Hom}_F(D_F,Z)
=\operatorname{Hom}_F(D_F,Z^\circ)
\simeq
\bigl(Y\otimes_{\mathbb Z}X^*(D_F)\bigr)^\Gamma.
\end{equation}
In particular,
$B(F,Z\to G)=B(F,Z^\circ\to G)$.

\begin{lemma}[{\cite[Proposition~3.4.2, Remark~3.4.3,
and Corollary~3.8.6]{KT23}}]
\label{lem:global-kottwitz-comparison}
There exists a morphism of gerbes
\[
f\colon\mathcal E^{\mathrm{rig}}_{\dot V}
\longrightarrow\mathcal E^{\mathrm{iso}}
\]
over $\Gamma$. Its band map need not be canonical.
\end{lemma}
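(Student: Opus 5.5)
The plan is to apply Lemma~\ref{lem:existence-comparison}: a morphism $f\colon\mathcal E^{\mathrm{rig}}_{\dot V}\to\mathcal E^{\mathrm{iso}}$ over $\Gamma$ with band map $\phi$ exists precisely when
\[
\phi_*(\xi^{\mathrm{rig}}_{\dot V})=\xi^{\mathrm{iso}}
\quad\text{in }H^2(F,D_F).
\]
So it suffices to produce an algebraic $F$-homomorphism $\phi\colon P_{\dot V}\to D_F$ with this property. Corollary~\ref{cor:comparison-exist-finite-type} cannot be used directly, because $D_F$ is a protorus of infinite type. I will therefore work with its finite-type quotients.

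Write $D_F=\varprojlim_E D_E$, where $D_E$ is the torus with $X^*(D_E)=\mathbb Z[V_E]_0$, and let $\xi_E$ be the image of $\xi^{\mathrm{iso}}$ in $H^2(F,D_E)$. For each $E$, Corollary~\ref{cor:comparison-exist-finite-type} applied to a gerbe of class $\xi_E$ gives some $\phi_E\colon P_{\dot V}\to D_E$ with $(\phi_E)_*\xi^{\mathrm{rig}}_{\dot V}=\xi_E$.

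The difficulty is compatibility. The maps $\phi_E$ must be chosen compatibly with the transition maps $D_K\to D_E$ so that they assemble into a single $\phi$. Having done that, one still has to show that $\phi_*\xi^{\mathrm{rig}}_{\dot V}=\xi^{\mathrm{iso}}$. This requires comparing $H^2(F,D_F)$ with $\varprojlim_E H^2(F,D_E)$, where a $\varprojlim^1$ term involving $H^1(F,D_E)$ may appear. I expect this to be the main obstacle.

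The cleanest way to overcome it is to use the explicit constructions of \cite{KT23} rather than abstract existence. One builds a $\Gamma$-equivariant map of character modules
\[
X^*(D_F)=\varinjlim_E\mathbb Z[V_E]_0\longrightarrow X^*(P_{\dot V}),
\]
level by level using the Tate--Nakayama-type descriptions of both classes. This is where the chosen lifts $\dot V$ enter, and it is why the band map need not be canonical. One then checks, using \cite[Proposition~3.4.2]{KT23}, that the induced $\phi$ pushes $\xi^{\mathrm{rig}}_{\dot V}$ to $\xi^{\mathrm{iso}}$. Concretely, this means verifying that the fundamental classes defining the two gerbes are compatible under the finite-level maps $\mathbb Z[V_E]_0\to\operatorname{Maps}(\Gamma_{E/F}\times S_E,\ \tfrac1n\mathbb Z/\mathbb Z)$-type modules. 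Any $\varprojlim^1$ issue is handled, as in \cite[Remark~3.4.3 and Corollary~3.8.6]{KT23}, by choosing compatible cocycle representatives along a cofinal tower of admissible pairs $(E,S)$.

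Once $\phi$ with $\phi_*\xi^{\mathrm{rig}}_{\dot V}=\xi^{\mathrm{iso}}$ is in hand, Lemma~\ref{lem:existence-comparison} produces $f$. The non-canonicity comes from two sources. First, $f$ itself is determined only up to $Z^1(F,D_F)$, by Lemma~\ref{lem:comparison-dependence}. Second, the compatible family $\phi_E$ involves choices, and the construction depends on $\dot V$.
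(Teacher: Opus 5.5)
\textbf{There is a genuine gap.} Your reduction is right. By Lemma~\ref{lem:existence-comparison} it suffices to find an $F$-homomorphism $\phi\colon P_{\dot V}\to D_F$ with $\phi_*(\xi^{\mathrm{rig}}_{\dot V})=\xi^{\mathrm{iso}}$. You are also right that Corollary~\ref{cor:comparison-exist-finite-type} cannot be applied to $D_F$ directly. But the proposal never actually produces $\phi$.

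First, a slip. Since $V_E$ is infinite, the group with character module $\mathbb Z[V_E]_0$ is not of finite type either. So Corollary~\ref{cor:comparison-exist-finite-type} applies only to the quotients $D_{E,S}$ with $X^*(D_{E,S})=\mathbb Z[S_E]_0$ and $S$ finite.

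More seriously, the maps $\phi_{E,S}$ it provides are chosen independently. At each level the admissible choices form only a coset of the kernel of $\alpha\mapsto\alpha_*(\xi^{\mathrm{rig}}_{\dot V})$ on $\operatorname{Hom}_F(P_{\dot V},D_{E,S})$. To assemble a compatible family, you would need the transition maps between these cosets to be surjective, or at least Mittag-Leffler. You would then still have to show that the limit map hits $\xi^{\mathrm{iso}}$ itself, not just its finite-level images. You correctly name this as the main obstacle. However, the paragraph meant to overcome it (build the character map ``level by level using Tate--Nakayama descriptions'', handle $\varprojlim^1$ ``by choosing compatible cocycle representatives'') only restates the problem. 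It also assigns to \cite[Proposition~3.4.2]{KT23} the verification $\phi_*(\xi^{\mathrm{rig}}_{\dot V})=\xi^{\mathrm{iso}}$, which is not the role that result plays.

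\textbf{The missing idea is the intermediate band of Kaletha--Ta\"ibi.} They construct $D^{\mathrm{mid}}_{\dot V}$ with $F$-homomorphisms $D_F\xrightarrow{c^{\mathrm{iso}}}D^{\mathrm{mid}}_{\dot V}\xleftarrow{c^{\mathrm{rig}}}P_{\dot V}$ whose canonical classes are compatible: $c^{\mathrm{rig}}_*(\xi^{\mathrm{rig}}_{\dot V})=c^{\mathrm{iso}}_*(\xi^{\mathrm{iso}})$. The input from \cite[Proposition~3.4.2]{KT23} is that $c^{\mathrm{iso}}$ admits a retraction $s\colon D^{\mathrm{mid}}_{\dot V}\to D_F$ with $s\circ c^{\mathrm{iso}}=\operatorname{id}_{D_F}$. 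Setting $\phi=s\circ c^{\mathrm{rig}}$ gives
\[
\phi_*(\xi^{\mathrm{rig}}_{\dot V})=s_*c^{\mathrm{rig}}_*(\xi^{\mathrm{rig}}_{\dot V})=s_*c^{\mathrm{iso}}_*(\xi^{\mathrm{iso}})=\xi^{\mathrm{iso}},
\]
and Lemma~\ref{lem:existence-comparison} then produces $f$. No finite-level approximation or inverse-limit argument is needed.

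This also pins down the non-canonicity in the statement: the band map depends on the non-canonical choice of the retraction $s$. Your first ``source'' of non-canonicity, the $Z^1(F,D_F)$-ambiguity of $f$ from Lemma~\ref{lem:comparison-dependence}, leaves the band map unchanged.
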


\begin{proof}
Kaletha--Ta\"ibi construct an intermediate band with maps
\[
D_F\xrightarrow{c^{\mathrm{iso}}}D^{\mathrm{mid}}_{\dot V}
\xleftarrow{c^{\mathrm{rig}}}P_{\dot V}.
\]
By \cite[Proposition~3.4.2]{KT23}, there exists an
$F$-homomorphism
$s\colon D^{\mathrm{mid}}_{\dot V}\to D_F$
such that
$s\circ c^{\mathrm{iso}}=\operatorname{id}_{D_F}$.
Set $\phi=s\circ c^{\mathrm{rig}}$.
The compatibility of the canonical classes gives
\[
\phi_*(\xi^{\mathrm{rig}}_{\dot V})
=
s_*c^{\mathrm{rig}}_*(\xi^{\mathrm{rig}}_{\dot V})
=
s_*c^{\mathrm{iso}}_*(\xi^{\mathrm{iso}})
=
\xi^{\mathrm{iso}}.
\]
Lemma~\ref{lem:existence-comparison} gives the desired morphism.
\end{proof}

Fix such a comparison $f$, with band map $\phi$.
Let $M=\pi_1(G)$ and, for each finite $F$-subgroup
$A\subseteq Z$, let $M'=\pi_1(G/A)$.

\begin{corollary}\label{cor:kottwitz-global-recovery}
There is a bijection of pointed sets, natural in $(Z\to G)$,
\begin{equation}\label{eq:kottwitz-global-ab}
\begin{aligned}
H^1_{\mathrm{ab}}(\mathcal E^{\mathrm{iso}},Z\to G)
\xrightarrow{\sim}
\left(
\varinjlim_A\varinjlim_{(E,S)}
\frac{M'[S_E,\dot S_E]_0^{N_{E/F}}}
     {I_{\Gamma_{E/F}}\bigl(M[S_E]_0\bigr)}
\right)
\underset{\operatorname{Hom}_F(P_{\dot V},Z)}{\times}
\bigl(Y\otimes_{\mathbb Z}X^*(D_F)\bigr)^\Gamma.
\end{aligned}
\end{equation}
The indexing systems and transition maps are those of
Theorem~\ref{prop:global-rigid-formula}.
The first map to the base is induced by restriction
to the band $P_{\dot V}$, as described in
Theorem~\ref{prop:global-rigid-formula}.
Under \eqref{eq:kottwitz-global-band-hom}, the second is
$\nu\mapsto\nu\circ\phi$.
The bijection sends $\operatorname{ab}[x,\nu]$ to
\[
\left(
\kappa_{\mathrm{rig}}\bigl(
\operatorname{ab}[x\circ f,\nu\circ\phi]\bigr),
\nu
\right),
\]
where $\kappa_{\mathrm{rig}}$ denotes the parametrization
in Theorem~\ref{prop:global-rigid-formula}.

If $F$ is totally imaginary, the same formula holds with
$B(F,Z\to G)$ on the left.
\end{corollary}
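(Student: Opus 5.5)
The corollary is a direct specialization of Theorem~\ref{cor:global-abelianized-formula} to $\mathcal E=\mathcal E^{\mathrm{iso}}$, $D=D_F$. The plan is to feed in the comparison of Lemma~\ref{lem:global-kottwitz-comparison} and rewrite the second factor via \eqref{eq:kottwitz-global-band-hom}.

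First, I fix the morphism $f\colon\mathcal E^{\mathrm{rig}}_{\dot V}\to\mathcal E^{\mathrm{iso}}$ supplied by Lemma~\ref{lem:global-kottwitz-comparison}, with band map $\phi=s\circ c^{\mathrm{rig}}$. Proposition~\ref{prop:global-formal}, specifically \eqref{eq:global-formal-ab}, then gives a natural bijection
\[
H^1_{\mathrm{ab}}(\mathcal E^{\mathrm{iso}},Z\to G)
\xrightarrow{\sim}
H^1_{\mathrm{ab}}(\mathcal E^{\mathrm{rig}}_{\dot V},Z\to G)
\underset{\operatorname{Hom}_F(P_{\dot V},Z)}{\times}
\operatorname{Hom}_F(D_F,Z),
\]
sending $\operatorname{ab}[x,\nu]$ to $(\operatorname{ab}[x\circ f,\nu\circ\phi],\nu)$.

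Second, I substitute Kaletha's parametrization \eqref{eq:global-rigid-formula} of Theorem~\ref{prop:global-rigid-formula} into the first factor. Its compatibility with restriction to $P_{\dot V}$ identifies the first structure map, exactly as in the proof of Theorem~\ref{cor:global-abelianized-formula}. Third, I replace $\operatorname{Hom}_F(D_F,Z)$ by $\bigl(Y\otimes X^*(D_F)\bigr)^\Gamma$ using \eqref{eq:kottwitz-global-band-hom}. This needs the remark that every algebraic homomorphism from the protorus $D_F$ lands in $Z^\circ$, together with the standard duality $\operatorname{Hom}_F(D_F,Z^\circ)=\operatorname{Hom}_\Gamma(X^*(Z^\circ),X^*(D_F))=(Y\otimes X^*(D_F))^\Gamma$. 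The latter uses that $Y$ is a finitely generated free $\mathbb Z$-module, so the Hom identifies with the tensor product. The second structure map becomes $\nu\mapsto\nu\circ\phi$ transported through this identification. Naturality in $(Z\to G)$ follows from the naturality of each ingredient.

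For the final assertion, when $F$ is totally imaginary, Corollary~\ref{cor:global-totally-imaginary} shows that $\operatorname{ab}\colon B(F,Z\to G)\to H^1_{\mathrm{ab}}(\mathcal E^{\mathrm{iso}},Z\to G)$ is bijective, so the formula applies to $B(F,Z\to G)$.

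The only step requiring care is the identification \eqref{eq:kottwitz-global-band-hom}. Because $D_F$ is pro-algebraic and not of finite type, one should check that algebraic homomorphisms $D_F\to Z^\circ$ correspond to $\Gamma$-homomorphisms on characters into the direct limit $X^*(D_F)$, and that the image of the finitely generated $X^*(Z^\circ)$ lands at a finite level $\mathbb Z[V_E]_0$. I expect this to be routine given the description of $X^*(D_F)$ as a direct limit. Since the band map $\phi$ is not canonical, the bijection depends on $f$ only as described by Lemma~\ref{lem:comparison-dependence}, but the statement only claims existence for the fixed $f$.
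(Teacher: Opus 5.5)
Your proposal is correct and matches the paper's proof: fix the comparison of Lemma~\ref{lem:global-kottwitz-comparison}, apply Theorem~\ref{cor:global-abelianized-formula} (which you unpack into Proposition~\ref{prop:global-formal} plus Theorem~\ref{prop:global-rigid-formula}, with Proposition~\ref{prop:global-formal} also giving the formula for the image), rewrite the second factor via \eqref{eq:kottwitz-global-band-hom}, and use Corollary~\ref{cor:global-totally-imaginary} for the totally imaginary case. Your extra check that a homomorphism from the protorus $D_F$ factors through $Z^\circ$, and that the finitely generated $X^*(Z^\circ)$ maps into a finite level of $X^*(D_F)$, fills in a step the paper only states.
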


\begin{proof}
Apply Theorem~\ref{cor:global-abelianized-formula} to the
comparison in Lemma~\ref{lem:global-kottwitz-comparison},
and use \eqref{eq:kottwitz-global-band-hom} to identify its
second factor.
The formula for the image follows from
Proposition~\ref{prop:global-formal}.
The last assertion is
Corollary~\ref{cor:global-totally-imaginary}.
\end{proof}

As in the local case, we rewrite the fiber product in
Corollary~\ref{cor:kottwitz-global-recovery} as a central
norm condition. The global analogue of $\pi_1(G)_\Gamma$
is Kottwitz's group $A(F,G)$.
For a connected reductive $F$-group $H$, write
\[
A(F,H)=\bigl(\pi_1(H)[V_E]_0\bigr)_{\Gamma_{E/F}},
\]
where $E/F$ is a finite Galois extension splitting a
maximal $F$-torus of $H$.
For $E\subseteq K$, summing coefficients over the places
above each place of $E$ induces a canonical isomorphism
\[
j_{K/E}\colon
\bigl(\pi_1(H)[V_K]_0\bigr)_{\Gamma_{K/F}}
\xrightarrow{\sim}
\bigl(\pi_1(H)[V_E]_0\bigr)_{\Gamma_{E/F}};
\]
see \cite[Lemma~11.2]{Kot14}.
These isomorphisms identify the groups for different
choices of $E$.
The finite-level norms induce Kottwitz's norm map
\[
N_H\colon A(F,H)\longrightarrow
\bigl(\pi_1(H)\otimes_{\mathbb Z}X^*(D_F)\bigr)^\Gamma;
\]
see \cite[Definition~11.3]{Kot14}.
For a representative $m\in\pi_1(H)[V_E]_0$,
the element $N_H([m])$ is the image of
$\sum_{\sigma\in\Gamma_{E/F}}\sigma m$ under
\[
\pi_1(H)\otimes_{\mathbb Z}\mathbb Z[V_E]_0
\longrightarrow
\pi_1(H)\otimes_{\mathbb Z}X^*(D_F).
\]

\begin{theorem}\label{thm:kottwitz-global-explicit}
Let $q\colon G\to G/Z$ be the quotient homomorphism.
There is a natural bijection of pointed sets
\begin{equation}\label{eq:kottwitz-global-central-kernel}
H^1_{\mathrm{ab}}(\mathcal E^{\mathrm{iso}},Z\to G)
\xrightarrow{\sim}
\ker\left[
A(F,G)\xrightarrow{\,N_{G/Z}\circ q_*\,}
\bigl(\pi_1(G/Z)\otimes_{\mathbb Z}X^*(D_F)\bigr)^\Gamma
\right].
\end{equation}
Its composition with abelianization is Kottwitz's map
$\kappa_G$ on $B(F,Z\to G)$.
If $F$ is totally imaginary, the same formula holds with
$B(F,Z\to G)$ on the left.
\end{theorem}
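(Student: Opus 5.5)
The plan is to imitate the local argument of Proposition~\ref{prop:kottwitz-local-recovery} and Theorem~\ref{thm:kottwitz-local-explicit}. Start from the fiber product of Corollary~\ref{cor:kottwitz-global-recovery}, build an explicit map from it to $A(F,G)$, and identify the image as a kernel via the global analogue of Lemma~\ref{lem:kottwitz-central-target}. First I would prove that analogue: the sequence $0\to Y\xrightarrow{i}\pi_1(G)\xrightarrow{q_*}\pi_1(G/Z)$ stays exact after tensoring with the torsion-free module $X^*(D_F)$ and taking $\Gamma$-invariants. Functoriality of the norm maps $N_H$ then shows that
\[
\ker\bigl(N_{G/Z}\circ q_*\bigr)=\{a\in A(F,G): N_G(a)\in\operatorname{im}(j_Z)\},
\]
where $j_Z\colon(Y\otimes X^*(D_F))^\Gamma\hookrightarrow(\pi_1(G)\otimes X^*(D_F))^\Gamma$. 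For such $a$ I write $\nu_a$ for the element with $j_Z(\nu_a)=N_G(a)$, as in the local case.

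Next I construct the coordinate map. Fix a level $(E,S)$ and a finite $A\subseteq Z$ through which $\nu\circ\phi$ factors. Let $([m'],\nu)$ be a pair in the fiber product, with $(m'_w)\in M'[S_E,\dot S_E]_0^{N_{E/F}}$. The rational cocharacter $\nu$, viewed in $Y\otimes\mathbb Z[V_E]_0\otimes\mathbb Q$ after enlarging $E$, gives through $\iota$ an element $\iota(\nu)\in \tilde\pi_1(G,Z)[V_E]_0$. The fiber-product condition says that the image of $m'$ in $M'/M$ at each $\dot v$ equals the $\dot v$-coordinate of $\nu\circ\phi$. With the correct sign convention for the band map (the global analogue of \eqref{eq:kottwitz-local-band-map}, which I would extract from the KT23 description of $\phi=s\circ c^{\mathrm{rig}}$), this says exactly that $m'+\iota(\nu)$ has coordinates in $M$, that is, $m'+\iota(\nu)\in M[V_E]_0$. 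I send $([m'],\nu)$ to the class of $m'+\iota(\nu)$ in $A(F,G)$, using $j_{K/E}$ to pass between levels. Since $N_{E/F}m'=0$, applying the norm gives $N_G([m'+\iota(\nu)])=j_Z(\nu)$, so the image lies in the kernel. For the inverse, take $a=[m]$ in the kernel and send it to $([m-\iota(\nu_a)],\nu_a)$. Here one must check that $m-\iota(\nu_a)$ lies in a norm kernel at some finite level after reindexing. It has rational norm zero, so it is torsion modulo $I$, and enlarging $(E,S)$ as in the proof of Theorem~\ref{thm:local-rigid-formula} kills the norm. One must also check that its $M'/M$ coordinates are supported on $\dot S_E$. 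Both maps are well defined modulo $I_{\Gamma_{E/F}}M[S_E]_0$ and are mutually inverse by construction.

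The main obstacle is the global bookkeeping. The first part is making $\iota(\nu)$ an honest element of the finite-level module $M'[S_E,\dot S_E]_0$: it must be supported on $\dot S_E$ modulo $M$ and be compatible with Kaletha's transition maps from \cite[Lemma~3.7.1]{Kal18a} and with Kottwitz's $j_{K/E}$. The second part is pinning down the explicit formula for $\nu\mapsto\nu\circ\phi$ under \eqref{eq:P-nonfinite}. I would first try to reduce to tori, where \cite[Corollary~3.8.6]{KT23} gives the comparison between $\kappa_{\mathrm{rig}}$ and Kottwitz's $\kappa_G$ directly. I would then handle general $G$ by conjugating the adjoint cocycle into a maximal torus, exactly as at the end of the proof of Proposition~\ref{prop:kottwitz-local-recovery}, and use naturality in $(Z\to G)$ of all maps. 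The same reduction shows that the composite with abelianization is $\kappa_G$. The totally imaginary statement then follows from Corollary~\ref{cor:global-totally-imaginary}.
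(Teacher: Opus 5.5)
\paragraph{The correction term is wrong.}
Your route is the paper's: a global analogue of Lemma~\ref{lem:kottwitz-central-target}, an explicit coordinate map out of the fiber product of Corollary~\ref{cor:kottwitz-global-recovery} with an inverse, then reduction to tori for $\kappa_G$. The gap is the correction term you add, and repairing it is not bookkeeping.

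Globally $\nu$ is not a rational cocharacter. At level $E$ it is a $\Gamma_{E/F}$-invariant element $\nu_E\in(Y\otimes\mathbb Z[V_E]_0)^{\Gamma_{E/F}}$, and the global norm $N_H$ has no factor $1/[E:F]$.
\begin{itemize}
\item If you add $\iota(\nu_E)$, then $N_{E/F}\,\iota(\nu_E)=[E:F]\,i(\nu_E)$, so you get $N_G=[E:F]\,j_Z(\nu)$ rather than $j_Z(\nu)$. Also, all coordinates of $\iota(\nu_E)$ lie in $M$, so they cannot cancel the nontrivial $M'/M$-coordinates of $m'$.
\item If you rescale to $\nu_E/[E:F]$, the norm is fixed, but the coordinates become nonintegral at places $w\notin\dot S_E$ (e.g.\ the non-chosen places over a split $v$). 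There $m'_w\in M$ by the definition of $M'[S_E,\dot S_E]_0$. So $m'+\iota(\nu)\notin M[S_E]_0$, and in the inverse direction $m-\iota(\nu_a)\notin M'[S_E,\dot S_E]_0$. The support check you postpone is exactly what fails.
\item The restriction of the rescaled invariant family to $\dot S_E$ need not sum to zero modulo $Y$. So it cannot be the coordinate family of $\nu\circ\phi$ under \eqref{eq:P-nonfinite}, whatever sign convention you adopt.
\end{itemize}

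\paragraph{The missing idea.}
You need a non-invariant norm-preimage of $\nu_E$ concentrated on the chosen lifts. It must be read off from the section $s$ of $c^{\mathrm{iso}}$ that defines $\phi=s\circ c^{\mathrm{rig}}$. At an admissible $(E,S)$, represent $\nu\circ s$ by $b_\nu\colon\Gamma_{E/F}\times S_E\to Y\otimes\mathbb Q$ as in \cite[Section~2.4]{KT23}, and set $\mu_{\nu,w}=b_\nu(1,w)$. Then:
\begin{itemize}
\item $\mu_\nu\in(Y\otimes\mathbb Q)[S_E]_0$ is supported on $\dot S_E$;
\item $N_{E/F}\mu_\nu=\nu_E$;
\item $\nu\circ\phi$ has coordinate $-\mu_{\nu,\dot v|_E}\bmod Y$ at $\dot v$ for $v\in S$.
\end{itemize}
With $\iota(\mu_\nu)$ in place of $\iota(\nu)$, both directions work. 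In the forward direction, the fiber-product condition gives $y_w+\iota(\mu_{\nu,w})\in M$ on $S_E$, and $N_{E/F}(y+\iota(\mu_\nu))=i(\nu_E)$. In the inverse direction, you enlarge $(E,S)$ only to make $N_{E/F}m=i(\nu_E)$ hold exactly. Then $m-\iota(\mu_\nu)$ has norm exactly zero, and it lies in $\pi_1(G/Z[n])[S_E,\dot S_E]_0^{N_{E/F}}$ once $n\mu_\nu$ is integral.

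Your borrowed non-Archimedean step (torsion modulo $I$, then enlarge to kill the norm) becomes unnecessary. It also does not transfer as stated: globally the transitions satisfy $N_{K/F}t_!=p_{K/E}N_{E/F}$, so enlarging only rescales norm coordinates by local degrees.

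\paragraph{The $\kappa_G$ identification.}
\cite[Corollary~3.8.6]{KT23} only supplies the gerbe morphisms; it does not compare $\kappa_{\mathrm{rig}}$ with $\kappa_G$. The torus comparison goes through the intermediate gerbe in three steps:
\begin{itemize}
\item Use $f^*=(r\circ k)^*$. This needs the lift-independence of Remark~\ref{rmk:global-kottwitz-lift-independence}, since $f$ is not canonical.
\item Apply \cite[Propositions~3.13.10 and~3.13.15]{KT23}. They yield $[y]=[\lambda-\iota(\mu_\nu)]$ with $[\lambda]=\kappa_T[x]$, which independently confirms that $\mu_\nu$ is the right correction term.
\item For general $G$, the conjugation of the adjoint cocycle into a maximal torus uses the global \cite[Lemma~A.1]{Kal18a}, not the local result you cite.
\end{itemize}
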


\begin{proof}
We first identify the fiber product in
Corollary~\ref{cor:kottwitz-global-recovery} with
\[
N_G^{-1}\bigl(\operatorname{im}(j_Z)\bigr),
\qquad
j_Z\colon\operatorname{Hom}_F(D_F,Z)
\hookrightarrow\bigl(M\otimes X^*(D_F)\bigr)^\Gamma,
\]
where $j_Z$ is induced by $i\colon Y=X_*(Z^\circ)\hookrightarrow M$.
The map $j_Z$ is injective since $X^*(D_F)$ is torsion-free.
Since $B(F,Z\to G)=B(F,Z^\circ\to G)$, and abelianization
preserves this identification, we may use
Corollary~\ref{cor:kottwitz-global-recovery} with $Z^\circ$.
For this part of the proof, assume therefore that $Z$ is a torus.

The construction in Section~\ref{subsec:local-kottwitz},
which applies over any field of characteristic zero, gives
\[
\iota\colon Y\otimes\mathbb Q
\hookrightarrow\tilde\pi_1(G,Z).
\]
It extends $i$ and induces an isomorphism
$(Y\otimes\mathbb Q)/Y\simeq\tilde\pi_1(G,Z)/M$,
by Proposition~\ref{prop:colimit-quotient}.
We use these maps coefficientwise on modules indexed by places.

Choose $s\colon D^{\mathrm{mid}}_{\dot V}\to D_F$
such that
$s\circ c^{\mathrm{iso}}=\operatorname{id}_{D_F}$
as in the proof of Lemma~\ref{lem:global-kottwitz-comparison}, and apply
Corollary~\ref{cor:kottwitz-global-recovery} to a comparison
inducing $\phi=s\circ c^{\mathrm{rig}}$.
For $\nu\in\operatorname{Hom}_F(D_F,Z)$, choose an
admissible $(E,S)$, with $E$ splitting a maximal
$F$-torus of $G$, at which $\nu\circ s$ is represented
by a function
\[
b_\nu\colon\Gamma_{E/F}\times S_E
\longrightarrow Y\otimes\mathbb Q
\]
as in \cite[Section~2.4]{KT23}.
Set $\mu_{\nu,w}=b_\nu(1,w)$.
The defining conditions and equivariance give
\[
\mu_\nu\in(Y\otimes\mathbb Q)[S_E]_0,
\qquad
\operatorname{supp}(\mu_\nu)\subseteq\dot S_E,
\]
and $b_\nu(\sigma,w)=\sigma\mu_{\nu,\sigma^{-1}w}$.
The character maps in \cite[(2.4.1)]{KT23} give
\begin{equation}\label{eq:kottwitz-global-correction}
N_{E/F}\mu_\nu=\nu_E,
\end{equation}
where $\nu_E\in(Y[S_E]_0)^{\Gamma_{E/F}}$ represents $\nu$.
Under the identification of
Lemma~\ref{lem:rig-nonfinite-global},
the homomorphism $\nu\circ\phi$ is represented by the family
whose coordinate at $\dot v$ is
$-\mu_{\nu,\dot v|_E}\bmod Y$ for $v\in S$,
and zero otherwise.

Represent an element of the fiber product by $([y],\nu)$
at a common finite level.
Then we have $N_{E/F}y=0$.
The restriction formula above and the fiber product
condition imply
$y_w+\iota(\mu_{\nu,w})\in M$
for every $w\in S_E$.
Since both families have sum zero, we obtain
$m=y+\iota(\mu_\nu)\in M[S_E]_0$.
Define
\begin{equation}\label{eq:kottwitz-global-coordinate}
([y],\nu)\longmapsto[\,y+\iota(\mu_\nu)\,]\in A(F,G).
\end{equation}
Changing $y$ by an element of
$I_{\Gamma_{E/F}}(M[S_E]_0)$ does not change this class.
Moreover,
\[
N_{E/F}m=i(\nu_E),
\qquad N_G([m])=j_Z(\nu).
\]
The construction respects enlargement of the finite level.
For $E\subseteq K$, choose a section $t\colon V_E\to V_K$
of the restriction map taking each chosen lift to the
chosen lift above it, and write $t_!$ for extension of
coefficients along $t$.
The transition on
\[
\frac{M'[S_E,\dot S_E]_0^{N_{E/F}}}
     {I_{\Gamma_{E/F}}\bigl(M[S_E]_0\bigr)}
\]
sends $[y]$ to $[t_!y]$.
Likewise, the identification of $A(F,G)$ from level
$E$ to level $K$ sends $[m]$ to $[t_!m]$, and the
representative of $\nu\circ s$ at level $K$ gives
the family $t_!\mu_\nu$; see
\cite[Sections~2.5 and 3.13]{KT23} and
\cite[Lemma~3.7.1]{Kal18a}.
Thus \eqref{eq:kottwitz-global-coordinate} commutes with
these transitions. Enlarging $S$ extends the families by zero.
If $p_{K/E}$ denotes the weighted character-module
transition, then $N_{K/F}t_!=p_{K/E}N_{E/F}$.

Conversely, suppose that $a\in A(F,G)$ satisfies
$N_G(a)=j_Z(\nu)$, where $\nu$ is unique.
Represent $a$ by $m\in M[S_E]_0$ at a common level with
$\mu_\nu$. Since the norm equality holds in a filtered
direct limit, we may enlarge $(E,S)$ until
$N_{E/F}m=i(\nu_E)$
holds in $M[S_E]_0$ itself.
Then $y=m-\iota(\mu_\nu)$ satisfies
\[
N_{E/F}y=0,
\qquad
y_w+\iota(\mu_{\nu,w})\in M
\]
for $w\in S_E$.
Thus its restriction to $P_{\dot V}$ is
$\nu\circ\phi$.
Choose $n\geq1$ such that
$n\mu_{\nu,w}\in Y$ for every $w\in S_E$.
Since $\mu_\nu$ is supported on $\dot S_E$, we obtain
\[
y\in\pi_1(G/Z[n])[S_E,\dot S_E]_0^{N_{E/F}}.
\]
Consequently,
\[
a=[m]\longmapsto
\bigl([m-\iota(\mu_\nu)],\nu\bigr)
\]
is well defined. Indeed, after enlarging $S$ if necessary,
changing the representative $m$ changes $y$ by an element of
$I_{\Gamma_{E/F}}(M[S_E]_0)$.
Compatibility with enlargement of $E$ was established above.
This is the inverse of \eqref{eq:kottwitz-global-coordinate},
hence the fiber product in
Corollary~\ref{cor:kottwitz-global-recovery} identifies with
$N_G^{-1}(\operatorname{im}(j_Z))$.

Return to the possibly disconnected $Z$.
As in Lemma~\ref{lem:kottwitz-central-target}, the sequence
\[
0\longrightarrow Y\xrightarrow{i}M
\xrightarrow{q_*}\pi_1(G/Z)
\]
is exact. Tensoring with the torsion-free module $X^*(D_F)$
and taking invariants identifies $\operatorname{im}(j_Z)$
with the kernel of the induced map to
$(\pi_1(G/Z)\otimes X^*(D_F))^\Gamma$.
Naturality of the norms now gives
\eqref{eq:kottwitz-global-central-kernel}.

Let $\mathcal E^{\mathrm{mid}}_{\dot V}$ represent the
intermediate class. Choose morphisms
\[
r \colon\mathcal E^{\mathrm{mid}}_{\dot V}
     \longrightarrow\mathcal E^{\mathrm{iso}},\qquad
h \colon\mathcal E^{\mathrm{iso}}
     \longrightarrow\mathcal E^{\mathrm{mid}}_{\dot V},\qquad
k \colon\mathcal E^{\mathrm{rig}}_{\dot V}
     \longrightarrow\mathcal E^{\mathrm{mid}}_{\dot V}
\]
inducing $s$, $c^{\mathrm{iso}}$, and $c^{\mathrm{rig}}$,
respectively. Their existence follows from
\cite[Corollary~3.8.6]{KT23} and
Lemma~\ref{lem:existence-comparison}.
The finite-level vanishing argument in
Remark~\ref{rmk:global-kottwitz-lift-independence}
shows that morphisms to $\mathcal E^{\mathrm{iso}}$
with the same band map induce the same pullback on
central cohomology. Consequently,
\[
f^*=(r\circ k)^*,
\qquad
(r\circ h)^*=\operatorname{id},
\]
where the second equality holds on
$H^1(\mathcal E^{\mathrm{iso}},Z\to G)$.

For $G=T$ a torus,
\cite[Proposition~3.13.10]{KT23} identifies the
intermediate coordinates of $r^*[x]$ with
$(\lambda,\mu_\nu)$, where $[\lambda]=\kappa_T[x]$.
Here $\mu_\nu$ is formed using the factorization of $\nu$
through $Z^\circ$.
By \cite[Proposition~3.13.15]{KT23}, pullback along $k$ gives
$[y]=[\lambda-\iota(\mu_\nu)]$.
Hence \eqref{eq:kottwitz-global-coordinate} sends
$([y],\nu)$ to $[\lambda]=\kappa_T[x]$.
For general $G$, apply \cite[Lemma~A.1]{Kal18a} to the
ordinary adjoint cocycle associated with a central
algebraic cocycle $(x,\nu)$.
After conjugation, its adjoint image lies in a maximal
$F$-torus of $G_{\mathrm{ad}}$.
Lifting the conjugating element to $G(\bar F)$ shows that
$(x,\nu)$ comes from the inverse image maximal torus of $G$.
Naturality therefore reduces the identification to the
torus case.
The totally imaginary case follows from
Corollary~\ref{cor:global-totally-imaginary}.
\end{proof}

For each real place $v$, let $\mathcal E_v^{\mathrm{iso}}$
be the local Kottwitz gerbe over $F_v$.
The localization morphisms
\[
\mathcal E_v^{\mathrm{iso}}
\longrightarrow(\mathcal E^{\mathrm{iso}})_v
\]
induce localization on algebraic cohomology and its
abelianization; see \cite[Section 3.11]{KT23}.
Here $(\mathcal E^{\mathrm{iso}})_v$ denotes the
localization of the global Kottwitz gerbe,
as defined in the preceding subsection.
Write
\[
B(F_v,Z\to G)
=H^1(\mathcal E_v^{\mathrm{iso}},Z\to G),
\]
suppressing base change in $Z$ and $G$.
Under Theorem~\ref{thm:kottwitz-global-explicit} and
Proposition~\ref{prop:kottwitz-local-recovery},
localization on abelianized cohomology is induced by
\[
\operatorname{loc}_v\colon A(F,G)
\longrightarrow\pi_1(G)_{\Gamma_w},
\qquad
[m]\longmapsto
\left[
\sum_{\sigma\in\Gamma_w\backslash\Gamma_{E/F}}
\sigma m_{\sigma^{-1}w}
\right].
\]
Here $m=(m_{w'})$ is a representative at level $E$,
$w=\dot v|_E$, and
$\Gamma_w=\operatorname{Gal}(E_w/F_v)$ is viewed as a
subgroup of $\Gamma_{E/F}$.
The sum runs over coset representatives, and its class
is independent of their choice.
This map sends the kernel in
\eqref{eq:kottwitz-global-central-kernel} into
$A(F_v,Z\to G)$; see
\cite[Section~11.8 and Lemma~11.7]{Kot14}.

\begin{corollary}\label{cor:kottwitz-global-cartesian}
Abelianization and Kottwitz localization give a cartesian
square
\begin{equation}\label{eq:kottwitz-global-cartesian}
\begin{tikzcd}
B(F,Z\to G)
  \ar[r,"\operatorname{ab}"]
  \ar[d,"\operatorname{loc}_\infty"']
&
H^1_{\mathrm{ab}}(\mathcal E^{\mathrm{iso}},Z\to G)
  \ar[d,"\operatorname{loc}_\infty"]
\\
\displaystyle\prod_{v\ \mathrm{real}} B(F_v,Z\to G)
  \ar[r,"\prod_v\operatorname{ab}_v"]
&
\displaystyle\prod_{v\ \mathrm{real}}
H^1_{\mathrm{ab}}(\mathcal E_v^{\mathrm{iso}},Z\to G).
\end{tikzcd}
\end{equation}
Thus \eqref{eq:kottwitz-global-central-kernel}, together
with the localization map above and the local
specialization in Section~\ref{subsec:local-kottwitz},
determines $B(F,Z\to G)$ for every number field.
\end{corollary}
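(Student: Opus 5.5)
The cartesian property of \eqref{eq:kottwitz-global-cartesian} is a special case of Proposition~\ref{prop:global-local-cartesian} with $\mathcal E=\mathcal E^{\mathrm{iso}}$. The only subtlety is that there the bottom row uses the localized gerbes $(\mathcal E^{\mathrm{iso}})_v$, whereas here it uses the local Kottwitz gerbes $\mathcal E_v^{\mathrm{iso}}$, mapped through the localization morphisms $\ell_v^{\mathrm{iso}}\colon\mathcal E_v^{\mathrm{iso}}\to(\mathcal E^{\mathrm{iso}})_v$. So I will stack two cartesian squares, mirroring the proof of Corollary~\ref{prop:global-real}: the square of Proposition~\ref{prop:global-local-cartesian} for $\mathcal E^{\mathrm{iso}}$, and beneath it, for each real $v$, the square
\[
\begin{tikzcd}
H^1((\mathcal E^{\mathrm{iso}})_v,Z\to G)\ar[r,"\operatorname{ab}"]\ar[d,"(\ell^{\mathrm{iso}}_v)^*"']&H^1_{\mathrm{ab}}((\mathcal E^{\mathrm{iso}})_v,Z\to G)\ar[d,"(\ell^{\mathrm{iso}}_v)^*"]\\
B(F_v,Z\to G)\ar[r,"\operatorname{ab}_v"]&H^1_{\mathrm{ab}}(\mathcal E_v^{\mathrm{iso}},Z\to G).
\end{tikzcd}
\]
Pasting cartesian squares gives a cartesian square, and products of cartesian squares are cartesian. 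The composite vertical maps are exactly Kottwitz localization $\operatorname{loc}_v=(\ell_v^{\mathrm{iso}})^*\circ\operatorname{loc}_v$.

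To see that the lower square is cartesian, I apply \eqref{eq:comparison-full-recovery} of Proposition~\ref{prop:comparison-formal-ab} to the morphism $\ell_v^{\mathrm{iso}}$ over $\Gamma_v$. This proposition holds over any field of characteristic zero, so it applies over $F_v=\mathbb R$. It identifies $H^1((\mathcal E^{\mathrm{iso}})_v,Z\to G)$ with the fiber product of its abelianization and $B(F_v,Z\to G)$ over $H^1_{\mathrm{ab}}(\mathcal E^{\mathrm{iso}}_v,Z\to G)$. That is precisely the cartesian assertion, with the horizontal maps being abelianization and the vertical ones pullback. The main point to check is that the band of $(\mathcal E^{\mathrm{iso}})_v$ is a pushout pro-algebraic group of multiplicative type, so Fact~\ref{fact:cartesian} applies. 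This is part of the setup of \cite[Section 3.11]{KT23}. Pullback along $\ell_v^{\mathrm{iso}}$ does not depend on the choice of lift, by Lemma~\ref{lem:comparison-dependence} together with the vanishing noted after Theorem~\ref{thm:kottwitz-local-explicit}.

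For the final assertion, Theorem~\ref{thm:kottwitz-global-explicit} identifies the top right corner with the kernel in \eqref{eq:kottwitz-global-central-kernel}. Proposition~\ref{prop:kottwitz-local-recovery} identifies each $H^1_{\mathrm{ab}}(\mathcal E_v^{\mathrm{iso}},Z\to G)$ with $A(F_v,Z\to G)$, compatibly with $\kappa_G$. Under these identifications the right vertical map is the displayed formula for $\operatorname{loc}_v$. Theorem~\ref{thm:kottwitz-local-explicit}, specifically \eqref{eq:kottwitz-real-full}, describes $B(F_v,Z\to G)$ together with $\operatorname{ab}_v$. The cartesian square then determines $B(F,Z\to G)$ as the corresponding fiber product.

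The hardest step is matching the abstract localization on $H^1_{\mathrm{ab}}$ with Kottwitz's explicit formula on $A(F,G)$. Since both parametrizations are compatible with $\kappa_G$ on the non-abelianized sets, I would argue as follows: $\operatorname{ab}$ is surjective, so it suffices to check the two maps agree after precomposing with $\operatorname{ab}$. On $B(F,Z\to G)$, Kottwitz's compatibility of $\kappa_G$ with localization \cite[Section~11.8]{Kot14} then gives the required agreement.
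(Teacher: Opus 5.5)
Your proposal is correct and matches the paper's proof. Apply \eqref{eq:comparison-full-recovery} to each localization morphism $\mathcal E_v^{\mathrm{iso}}\to(\mathcal E^{\mathrm{iso}})_v$, take the product over the real places, and paste with the square of Proposition~\ref{prop:global-local-cartesian} for $\mathcal E^{\mathrm{iso}}$. One small aside: your lift-independence remark is not needed, since the square is cartesian for any fixed localization morphism, and as justified it cites the wrong vanishing. The Hilbert~90 vanishing is for the split local Kottwitz band, whereas Lemma~\ref{lem:comparison-dependence} would require it for the target band, namely the base change to $F_v$ of the global band.
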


\begin{proof}
For each real place $v$, apply
Proposition~\ref{prop:comparison-formal-ab},
specifically \eqref{eq:comparison-full-recovery}, to
the localization morphism
$\mathcal E_v^{\mathrm{iso}}
\to(\mathcal E^{\mathrm{iso}})_v$.
The resulting square formed by pullback and
abelianization is cartesian.

Taking the product of these squares over the real places
and composing with the square of
Proposition~\ref{prop:global-local-cartesian}
for $\mathcal E^{\mathrm{iso}}$
gives the asserted diagram.
The composite vertical maps are Kottwitz localization.
\end{proof}

If $Z$ is finite, every algebraic homomorphism $D_F\to Z$
is trivial, hence we have
\[
B(F,Z\to G)\simeq H^1(F,G).
\]
For $Z=Z(G)$, the set $B(F,Z\to G)$ is Kottwitz's basic
set $B(F,G)_{\mathrm{bas}}$;
see \cite[Lemma~3.12.1]{KT23}.

\begin{remark}\label{rmk:global-kottwitz-lift-independence}
For a fixed band map $\phi$, the coordinates in
\eqref{eq:kottwitz-global-ab} are independent of the lift $f$.
To see this, two lifts differ by
$\alpha\in Z^1(F,D_F)$, and on the restriction fiber
over $\nu$ their pullbacks differ by the central cocycle
$\nu\circ\alpha$.
Since $Z$ is of finite type, $\nu$ factors through
a finite-level torus $D_{E,S}$ with character module
$\mathbb Z[S_E]_0$.
Enlarging $S$, we may assume that $(E,S)$ is admissible.
In particular, every decomposition subgroup occurring
in $V_E$ also occurs in $S_E$, hence
\cite[Lemma~3.8.1]{KT23} gives $H^1(F,D_{E,S})=0$.
Thus $\nu\circ\alpha$ is a coboundary, and the two
pullbacks induce the same map on cohomology.
The kernel parametrization in
Theorem~\ref{thm:kottwitz-global-explicit}
is independent of the band map as well, since it is
induced by $\kappa_G$.
\end{remark}

\end{document}